\theoremstyle{definition}
\newtheorem{theorem}{Theorem}[section]
\newtheorem{lemma}[theorem]{Lemma}
\newtheorem{proposition}[theorem]{Proposition}
\newtheorem{corollary}[theorem]{Corollary}
\newtheorem{definition}[theorem]{Definition}
\newtheorem{question}[theorem]{Question}
\newtheorem{remark}[theorem]{Remark}
\newcommand{\HH}{\mathbb{H}}
\newcommand{\QQ}{\mathbb{Q}}
\newcommand{\ZZ}{\mathbb{Z}}
\newcommand{\OO}{\mathcal{O}}
\newcommand{\hh}{\mathfrak{h}}
\newcommand{\la}{\langle}
\newcommand{\ra}{\rangle}
\newcommand{\ttt}{\mathfrak{t}}
\date{}
\newcommand{\h}{{\mathfrak{h}}}
\newcommand{\CC}{{\Bbb C}}
\begin{document}

\pagestyle{myheadings}
\title{Unitary representations of rational Cherednik algebras}
\author{Pavel Etingof, Emanuel Stoica \\ (with an appendix by Stephen Griffeth)}
\maketitle

\begin{abstract}
We study unitarity of lowest weight irreducible
representations of rational Cherednik algebras.
We prove several general results, and use them to determine
which lowest weight representations are unitary
in a number of cases.

In particular, in type A, we give a full
description of the unitarity locus (justified in Subsection 5.1 and the appendix written by S. Griffeth),
and resolve a question by Cherednik
on the unitarity of the irreducible subrepresentation
of the polynomial representation. Also, as a by-product,
we establish Kasatani's conjecture in full generality (the previous proof
by Enomoto assumes that the parameter $c$ is not a half-integer).
\end{abstract}

\section{Introduction}
One of the important problems in the theory
of group representations is to determine when
an irreducible complex representation of a given group
is unitary. In the case of noncompact Lie groups, this is
a very hard problem, which has not been completely solved.
For p-adic groups, this problem leads to the difficult and
interesting problem of classification of unitary representations
of affine Hecke algebras.

In this paper, we begin to study the problem of classification
of unitary representations for rational Cherednik algebras.
Recall that a rational Cherednik algebra $H_c(W,\h)$
is defined by a finite group $W$, a finite dimensional complex
representation $\h$ of $W$, and a function $c$ on conjugacy
classes of reflections in $W$. Recall also that for
any irreducible representation $\tau$ of $W$, one can define the
irreducible lowest weight representation $L_c(\tau)$
of $H_c(W,\h)$. If $c(s^{-1})=\bar c(s)$ for all reflections $s$,
then the representation $L_c(\tau)$ admits a unique, up to
scaling, nondegenerate contravariant Hermitian form.
We say that $L_c(\tau)$ is unitary if this form is positive
definite (under an appropriate normalization).

The main problem is then to determine for which
$c$ and $\tau$ the representation $L_c(\tau)$ is unitary.
This problem is motivated by harmonic analysis, and was
posed by I. Cherednik. In general, it appears to be quite difficult,
like its counterpart in the theory of group representations.
The goal of this paper is to begin to attack this problem,
by proving a number of partial results about unitary representations.

More specifically, for every $\tau$ we define the set $U(\tau)$
of values of $c$ for which the representation $L_c(\tau)$ is unitary.
We prove several general results about $U(\tau)$, and
use them to determine the sets $U(\tau)$ in a number of special
cases.

In particular, Theorem \ref{genera} gives a full description of the
sets $U(\tau)$ in type A. Namely, it states that unless
$\tau$ is the trivial or sign representation (in which
case $U(\tau)=(-\infty,1/n]$ and $[-1/n,+\infty)$ respectively),
the set  $U(\tau)$ consists of the interval $[-1/\ell,1/\ell]$, where
$\ell$ is the length of the largest hook of $\tau$ (``the
continuous spectrum'') and a certain finite set of points of the form $1/j$, where $j$
are integers (``the discrete spectrum''). We note that the authors of the main body
of the paper were unable to prove Theorem \ref{genera} in its full strength;
they were only able to prove that the claimed set contains the unitarity locus, 
which in turn contains the interval $[-1/\ell,1/\ell]$, and some additional 
partial results discussed in Subection 5.1. 
The proof of Theorem \ref{genera} was completed by an argument due to S. Griffeth, which uses 
Cherednik's technique of intertwiners and Suzuki's work \cite{Su}, and is contained in
the appendix.

We also answer, for type $A$, a question by
Cherednik, proving that if $c=1/m$, $2\le m\le n$,
then the irreducible submodule $N_c$
of the polynomial module $M_c(\Bbb C)$ over the rational Cherednik algebra $H_c(S_n,\Bbb C^n)$
is unitary, and moreover its unitary structure is given
by the integration pairing with the Macdonald-Mehta measure.

As a by-product, we determine in full generality the
structure of the polynomial representation
of the rational Cherednik algebra of type $A$,
conjectured by Dunkl \cite{Du}; this implies a similar description
of the structure of the polynomial representation
of the double affine Hecke algebra, conjectured by Kasatani \cite{Ka}.
These results were established earlier by Enomoto \cite{En}
under an additional assumption that $c$ is not a half-integer,
which we show to be unnecessary.

The organization of the paper is as follows.
Section 2 contains preliminaries. In Section 3,
we prove some general properties of unitarity loci,
and completely determine them in the rank 1 case.
In Section 4, we focus on the special case of real reflection groups,
prove some general properties of the unitarity loci, and
compute them in the rank 2 case. In Section 5
we give the results in type A - prove the Dunkl-Kasatani conjecture,
answer Cherednik's question, state the theorem
on the classification of unitary representations,
and begin its proof. The proof is completed in the appendix.

{\bf Acknowledgments.}
We are very grateful to I. Cherednik,
who posed the main problem and suggested
a number of important techniques.
This paper would not have appeared
without his influence. We also thank
D. Vogan for many useful discussions
about unitary representations of Lie groups, and
Charles Dunkl for comments on a preliminary 
version of the paper.
The work of P.E. was  partially supported by the NSF grant
DMS-0504847.  The work of S.G. was partially supported
by NSF Career Grant DMS-0449102.

\section{Preliminaries}

\subsection{Definition of rational Cherednik algebras}

Let $\h$ be a finite dimensional vector space over $\CC$
with a positive definite Hermitian
\footnote{We agree that Hermitian forms are antilinear on the
second argument.} inner product $(,)$.
Let $T: \h\to \h^*$ be the antilinear isomorphism defined by the formula
$(Ty_1)(y_2)=(y_2,y_1)$.

Let $W$ be a finite subgroup
of the group of unitary transformations of $\h$.
A reflection in $W$ is an element $s\in W$ such that
${\rm rk}(s-1)|_\h=1$. Denote by $S$ the set of reflections in $W$.
Let $c: S\to \CC$ be a $W$-invariant function.
For $s\in S$, let $\alpha_s\in \h^*$ be a generator of ${\rm
Im}(s-1)|_{\h^*}$, and $\alpha_s^\vee\in \h$ be the generator of ${\rm
Im}(s-1)|_{\h}$, such that $(\alpha_s,\alpha_s^\vee)=2$.
If $W$ is generated by reflections, then we denote by $d_i,
i=1,...,\dim \h$, the degrees of the generators of $\Bbb C[\h]^W$.

\begin{definition} (see e.g. \cite{EG,E1})
The rational Cherednik algebra $H_c(W,\h)$ is the
quotient of the algebra $\CC W\ltimes T(\h\oplus \h^*)$
by the ideal generated by the relations
$$
[x,x']=0,\ [y,y']=0,\ [y,x]=(y,x)-\sum_{s\in S}
c_s(y,\alpha_s)(x,\alpha_s^\vee)s,
$$
$x,x'\in \h^*$, $y,y'\in \h$.
\end{definition}

An important role in the representation theory of rational
Cherednik algebras is played by the element
$$
\bold h=\sum_i x_iy_i+\frac{\dim
\h}{2}-\sum_{s\in S}\frac{2c_s}{1-\lambda_s} s,
$$
where $y_i$ is a basis of $\h$, $x_i$ the dual basis of $\h^*$,
and $\lambda_s$ is the nontrivial eigenvalue of $s$ in $\h^*$.
Its usefulness comes from the fact that it satisfies the
identities
\begin{equation}\label{sca}
[\bold h,x_i]=x_i, [\bold h,y_i]=-y_i.
\end{equation}

\subsection{Verma modules, irreducible modules, and the contravariant
form}

Let $\tau$ be an irreducible representation of $W$.
Denote by $M_c(\tau)$ the corresponding Verma module,
$M_c(\tau)=H_c(W,\h)\otimes_{\Bbb CW\ltimes S\h}\tau$, where
$\h$ acts on $\tau$ by zero.
Any quotient of $M_c(\tau)$ is called a lowest weight module with
lowest weight $\tau$. Denote by $L_c(\tau)$
the smallest of such modules, i.e. the
unique irreducible quotient of the module $M_c(\tau)$.
If confusion is possible, we will use the long notation
$M_c(W,\h,\tau)$, $L_c(W,\h,\tau)$ for $M_c(\tau)$, $L_c(\tau)$.

Denote by ${\mathcal O}_c(W,\h)$ the category of
$H_c(W,\h)$-modules which are finitely generated under
the action of $\Bbb C[\h]$, and locally nilpotent under the
action of $\h$. Examples of objects of this category are
$M_c(\tau)$ and $L_c(\tau)$.

It is easy to see that the element $\bold h$ acts locally
finitely on any object of ${\mathcal O}_c(W,\h)$,
with finite dimensional generalized eigenspaces.
In particular, it acts semisimply on any lowest weight module
$M$, with lowest eigenvalue
$$
h_c(\tau)=\frac{\dim
\h}{2}-\sum_{s\in S}\frac{2c_s}{1-\lambda_s} s|_\tau
$$
All other eigenvalues of $\bold h$ on $M$
are obtained by adding a nonnegative
integer to $h_c(\tau)$, and this nonnegative integer gives a
$\Bbb Z_+$-grading on $M$.

If $M\in {\mathcal O}_c(W,\h)$, then a vector $v\in M$ is called
a singular vector if $yv=0$ for any $y\in \h$. It is clear that
a lowest weight module $M$ is irreducible if and only if
it has no nonzero singular vectors of positive degree.

\subsection{Unitary representations}

Let $c^\dagger$ be the function on $S$ defined by
$c^\dagger(s)=\bar c(s^{-1})$.
Fix a $W$-invariant Hermitian form $(,)_\tau$ on $\tau$,
normalized to be positive definite.

\begin{proposition}\label{contra} (i) There exists a unique
$W$-invariant Hermitian form $\beta_{c,\tau}$ on $M_c(\tau)$
which coincides with $(,)_\tau$ in degree zero, and
satisfies the contravariance condition
$$
(yv,v')=(v,Ty\cdot v'),\ v,v'\in M_c(\tau), y\in \h.
$$

(ii) The kernel of $\beta_{c,\tau}$ coincides with the maximal proper submodule
$J_c(\tau)$ of $M_c(\tau)$, so this form descends to a
nondegenerate form on the quotient
$M_c(\tau)/J_c(\tau)=L_c(\tau)$.
\end{proposition}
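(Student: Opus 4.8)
The plan is to encode the whole proposition in a single antilinear anti-involution of $H_c(W,\h)$ and to read off both parts from it.

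First I would introduce the antilinear map $\gamma\colon H_c(W,\h)\to H_c(W,\h)$ determined on generators by $\gamma(y)=Ty$ for $y\in\h$, $\gamma(x)=T^{-1}x$ for $x\in\h^*$, and $\gamma(w)=w^{-1}$ for $w\in W$, extended as an antilinear anti-automorphism, $\gamma(ab)=\gamma(b)\gamma(a)$. This is forced: the contravariance condition says exactly that $Ty$ is the adjoint of $y$, so $\gamma$ must be the adjoint operation of the desired form. The crux is that $\gamma$ is well defined, i.e.\ respects the relations. The relations $[x,x']=0$ and $[y,y']=0$ are clear since $T^{\pm1}$ are linear maps of the commutative algebras $S\h^*,S\h$; all the content sits in $[y,x]=(y,x)-\sum_s c_s(y,\alpha_s)(x,\alpha_s^\vee)s$. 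Applying $\gamma$ sends the left side to $[T^{-1}x,Ty]$, whose scalar part already matches because $(Ty)(T^{-1}x)=\overline{(y,T^{-1}x)}=\overline{(y,x)}$; matching the reflection sum is the main computation, and it is precisely here that unitarity of $W$ (so that $T$ relates the $s$- and $s^{-1}$-eigenlines carrying $\alpha_s,\alpha_s^\vee$) and the reality hypothesis $c=c^\dagger$, i.e.\ $\overline{c_s}=c_{s^{-1}}$, are used to convert $c_s\,s$ into $\overline{c_s}\,s^{-1}$. I expect this verification to be the main obstacle; everything afterward is comparatively formal. Checking on generators also gives $\gamma^2=\mathrm{id}$.

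Uniqueness then dictates the construction. Using the PBW identification $M_c(\tau)=S\h^*\otimes\tau$, any vector is a combination of monomials $x^a u$ with $u\in\tau$, and for any form obeying contravariance and the degree-zero normalization I would strip the creation operators off the first argument via $\beta(xv,v')=\beta(v,(T^{-1}x)v')$, turning them into annihilation operators on the second argument until the first lands in degree zero; there the value is forced to be $(u,\cdot)_\tau$ applied to the degree-zero component of $\gamma(x^a)x^b u'$, a quantity computed from the commutation relations alone. Hence at most one such form exists. For existence I take this formula as the definition: writing $\mathrm{HC}\colon H_c\to\CC W$ for the projection onto the $\CC W$-part of the PBW decomposition $H_c=S\h^*\otimes\CC W\otimes S\h$, set $\beta(x^au,x^bu')=(u,\mathrm{HC}(\gamma(x^a)x^b)\,u')_\tau$ and extend sesquilinearly. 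The adjunction $\beta(zv,v')=\beta(v,\gamma(z)v')$ for all $z\in H_c$ reduces, through $\gamma(z_1z_2)=\gamma(z_2)\gamma(z_1)$, to the generators: for $z\in\h^*\cup\h$ it is the normal-ordering bookkeeping behind the stripping above, and for $z\in W$ it is $W$-invariance of $(,)_\tau$ and of $\mathrm{HC}$. Specializing $z=y$ gives contravariance, $z=w$ gives $\beta(wv,wv')=\beta(v,v')$, the degree-zero normalization is immediate from $\mathrm{HC}(1)=1$, and Hermitian symmetry follows from $\gamma^2=\mathrm{id}$ together with that of $(,)_\tau$. This proves (i).

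For (ii) note first that $\beta$ is homogeneous, $\beta(M_c(\tau)_i,M_c(\tau)_j)=0$ for $i\neq j$, by a degree count: the $\CC W$-component of $\gamma(x^a)x^b$ vanishes unless $|a|=|b|$. Hence the radical $\mathrm{Rad}(\beta)=\{v:\beta(v,\cdot)=0\}$ is graded, and it is a submodule because $\gamma$ is the adjoint: $\beta(zv,v')=\beta(v,\gamma(z)v')$ gives $z\cdot\mathrm{Rad}(\beta)\subseteq\mathrm{Rad}(\beta)$. As $\beta$ equals the nondegenerate $(,)_\tau$ in degree zero, $\mathrm{Rad}(\beta)\cap\tau=0$, so $\mathrm{Rad}(\beta)$ is a proper submodule and therefore $\mathrm{Rad}(\beta)\subseteq J_c(\tau)$. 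Conversely, any nonzero positive-degree singular vector $v$ lies in $\mathrm{Rad}(\beta)$: pairing against any $v'$, write the degree-matching part of $v'$ as $\sum_i x_i m_i$ and move each $x_i$ onto $v$ as $T^{-1}x_i\in\h$, which annihilates $v$. Thus the quotient $\bar M=M_c(\tau)/\mathrm{Rad}(\beta)$ carries a nondegenerate contravariant form and can have no nonzero positive-degree singular vector, since such a vector would lie in the radical of the induced form; by the irreducibility criterion recalled in the text, $\bar M$ is irreducible. Therefore $\mathrm{Rad}(\beta)$ is maximal proper, forcing $\mathrm{Rad}(\beta)=J_c(\tau)$, and $\beta$ descends to the asserted nondegenerate form on $L_c(\tau)$.
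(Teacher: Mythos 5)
Your proof is correct, and it is precisely the standard argument that the paper's one-word proof (``Standard.'') invokes: the antilinear anti-involution $\gamma$ you construct is the same conjugate-linear anti-automorphism $*$ (with $y^*=Ty$, $x^*=T^{-1}x$, $w^*=w^{-1}$) that Griffeth's appendix later uses, and the stripping/normal-ordering mechanics and the radical-versus-maximal-submodule argument are the usual ones (cf.\ \cite{GGOR}). One remark in your favor: you correctly isolate that well-definedness of $\gamma$ on the commutation relation --- hence \emph{existence} of the Hermitian form --- requires $c=c^\dagger$, a hypothesis implicit in the surrounding text (it appears in the introduction) though not displayed in the proposition itself, and indeed existence genuinely fails without it.
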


\begin{proof}
Standard.
\end{proof}

We'll call $\beta_{c,\tau}$ the contravariant Hermitian form.
It is defined uniquely up to a positive scalar, which will not be
important.

Let $C$ denote the space of functions $c$ such that
$c=c^\dagger$.

\begin{definition}
Let $c\in C$. The representation $L_c(\tau)$ is said to be
unitary if the form $\beta_{c,\tau}$ is positive definite on
$L_c(\tau)$.
\end{definition}

\begin{definition}
$U(\tau)$ is the set of points $c\in C$,
such that $L_c(\tau)$ is unitary.
We call $U(\tau)$ the unitarity locus for $\tau$.
\end{definition}

\section{General properties of the sets $U(\tau)$}

\subsection{The general case}

\begin{proposition}\label{zer}
(i) $U(\tau)$ is a closed set in $C$.

(ii) The point $0$ belongs to the interior of $U(\tau)$
for any $\tau$.

(iii) The connected component $Y_0(\tau)$ of $0$ in the set
$Y(\tau)$ of all $c\in C$
for which the form $\beta_{c,\tau}$ is nondegenerate (i.e.,
$M_c(\tau)$ is irreducible) is contained in $U(\tau)$.

(iv) Let $M$ be a lowest weight representation of
$H_{c_0}(W,\h)$ which is the limit of a 1-parameter family
of irreducible unitary representations $L_{c_0+tc_1}(\tau)$,
$t\in (0,\varepsilon)$, as $t$ goes to $0$.
Then all composition factors of $M$ are unitary.
\end{proposition}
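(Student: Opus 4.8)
The plan is to handle all four parts through one structural observation: the form $\beta_{c,\tau}$ is graded, and on each graded piece $M_c(\tau)[n]\cong\CC[\h]_n\otimes\tau$ (a space independent of $c$) its Gram matrix $G_n(c)$ depends polynomially on $c$. The reformulation I would establish first is that $L_c(\tau)$ is unitary if and only if $\beta_{c,\tau}$ is positive semidefinite on all of $M_c(\tau)$. Indeed, by Proposition \ref{contra} the radical of $\beta_{c,\tau}$ is $J_c(\tau)$, so decomposing $M_c(\tau)=J_c(\tau)\oplus(\text{complement})$ (with no cross terms, since $J_c(\tau)$ is the radical) shows positive-definiteness on the quotient $L_c(\tau)$ is equivalent to positive-semidefiniteness on $M_c(\tau)$. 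Granting this, part (i) is immediate: $U(\tau)=\bigcap_n\{c:G_n(c)\ge 0\}$ is a countable intersection of closed sets, since semidefiniteness is a closed condition and each $G_n$ is continuous in $c$.

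For parts (ii) and (iii) I would first identify $\beta_{0,\tau}$. At $c=0$ each $y\in\h$ acts by an ordinary derivative, so $\beta_{0,\tau}$ is the Fischer (Fock-space) pairing on $\CC[\h]$ twisted by the positive-definite form on $\tau$; this is positive definite, so $0\in U(\tau)$ and $M_0(\tau)=L_0(\tau)$. Part (iii) is then a signature-continuity argument: on $Y(\tau)$ the form is nondegenerate, hence nondegenerate on each graded piece (distinct $\mathbf{h}$-eigenspaces are orthogonal as $\mathbf{h}$ is self-adjoint), and the signature of a nondegenerate Hermitian form is locally constant, so each $G_n$ keeps constant signature on the connected set $Y_0(\tau)$; being positive definite at $0$, it stays positive definite throughout $Y_0(\tau)$. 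For part (ii) I would produce an explicit neighborhood: a singular vector of degree $n\ge1$ in $M_c(\tau)$ forces $h_c(\sigma)-h_c(\tau)=n$ for some $W$-type $\sigma$, and since this difference is linear in $c$ and vanishes at $c=0$, there is a ball $|c|<\delta$ on which $|h_c(\sigma)-h_c(\tau)|<1$ for all (finitely many) $\sigma$. On this ball $M_c(\tau)$ has no positive-degree singular vectors, hence is irreducible, so the ball lies in $Y_0(\tau)\subseteq U(\tau)$ by (iii).

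Part (iv) is the substantial one, and I would prove it by a Jantzen-filtration argument. Working over $\CC[[t]]$, form the deformed lattice $M_{[[t]]}=\CC[[t]]\otimes\CC[\h]\otimes\tau$ with the $H_{c_0+tc_1}$-action and the $\CC[[t]]$-valued contravariant form $\beta_t$; on each graded piece the Gram matrix $G_n(t)$ is a polynomial family that is positive semidefinite for $t\in(0,\varepsilon)$ and tends to $G_n(0)$, the Gram matrix of $\beta_{c_0,\tau}$. Since semidefiniteness is closed, $\beta_{c_0,\tau}\ge0$ on $M_{c_0}(\tau)$. I would then set $M^{(i)}=\{v\in M_{[[t]]}:\beta_t(v,M_{[[t]]})\subseteq t^i\CC[[t]]\}$ and, reducing mod $t$, obtain a filtration of $M_{c_0}(\tau)$ whose layers $M^{(i)}/M^{(i+1)}$ carry the leading-coefficient form $G^{[i]}$ gotten by dividing $\beta_t$ by $t^i$ and setting $t=0$. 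The key positivity is elementary: if $G(t)=G_0+tG_1+\cdots\ge0$ for $t>0$ and $v\in\ker G_0$, then $\langle G(t)v,v\rangle=t\langle G_1v,v\rangle+O(t^2)\ge0$ forces $G_1|_{\ker G_0}\ge0$, and iterating shows each $G^{[i]}$ is positive semidefinite; it is nondegenerate on its layer by construction, hence positive definite. Since the given limit $M$ is sandwiched between $M_{c_0}(\tau)$ and $L_{c_0}(\tau)$ (its composition factors are among those of the Jantzen layers), all composition factors of $M$ are unitary.

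The main obstacle I anticipate is the contravariance bookkeeping in part (iv): one must verify that each leading-term form $G^{[i]}$ is genuinely contravariant for the \emph{undeformed} algebra $H_{c_0}(W,\h)$ and that the reductions of the $M^{(i)}$ are honest $H_{c_0}$-submodules. This requires expanding the identity $\beta_t(yv,v')=\beta_t(v,Ty\cdot v')$ in powers of $t$ and checking that the subleading terms do not spoil contravariance on the associated graded, together with identifying the given $M$ precisely with the flat limit $M_{c_0}(\tau)/\bar J$, where $\bar J$ is the $t\to0$ limit of $J_{c_0+tc_1}(\tau)$ and satisfies $\bar J\subseteq J_{c_0}(\tau)$. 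Setting up this $t$-adic filtration so that the form- and module-structures are compatible is the delicate step; the positivity of the layers, by contrast, follows automatically from the semidefiniteness of the family.
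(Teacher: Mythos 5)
Your proposal is correct and follows essentially the same route as the paper's (very terse) proof: closedness of the positive-semidefiniteness condition on $M_c(\tau)$ for (i), positivity of the Fischer-type pairing $\beta_{0,\tau}$ for (ii), constancy of the signature of a continuous family of nondegenerate Hermitian forms on the connected set $Y_0(\tau)$ for (iii), and the Jantzen filtration argument for (iv). The only difference is the level of detail: your ball-around-zero argument ruling out singular vectors (needed to make ``$0$ is interior'' honest) and your $t$-adic leading-form analysis in (iv) carry out steps that the paper dismisses as ``standard.''
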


\begin{proof}
(i) $c\in U(\tau)$ iff the contravariant form
is nonnegative definite on $M_c(\tau)$, which is a closed
condition on $c$.

(ii) We have a natural identification of $M_c(\tau)$
with $\tau\otimes \Bbb C[\h]$, and
the form $\beta_{0,\tau}$ is the tensor product of the
form $(,)_\tau$ on $\tau$ and the standard inner product on $\Bbb
C[\h]$, given by the formula $(f,g)=(D_g f)(0)$,
$D_g\in S\h$ being the differential operator on $\h$ with constant coefficients
corresponding to $g\in S\h^*$ (via the operator $T$).
Thus $\beta_{0,\tau}>0$, as desired.

(iii) This follows from the standard fact that a continuous family of
nondegenerate Hermitian forms is positive definite iff one of them
is positive definite.

(iv) This follows from the standard argument with the Jantzen
filtration.
\end{proof}

It is useful to consider separately the case of constant
functions $c\in C$ (in this case $c$ is real).
Namely, let $U^*(\tau)\subset \Bbb R$ be the set
of all $c\in \Bbb R$ that belong to $U(\tau)$.
It is easy to see that analogously to
Proposition \ref{zer}, we have:

\begin{corollary}\label{zer1}
(i) $U^*(\tau)$ is a closed set in $\Bbb R$.

(ii) The point $0$ belongs to the interior of $U^*(\tau)$
for any $\tau$.

(iii) The connected component $Y_0^*(\tau)$ of $0$ in the set
$Y^*(\tau)$ of all $c\in \Bbb R$
for which the form $\beta_{c,\tau}$ is nondegenerate (i.e.,
$M_c(\tau)$ is irreducible) is contained in $U^*(\tau)$.
\end{corollary}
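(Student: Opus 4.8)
The plan is to deduce all three parts from Proposition \ref{zer} by restricting to the one-dimensional real subspace of $C$ consisting of constant functions. Recall that a constant function $c$ lies in $C$ precisely when $c=\bar c$, i.e.\ when $c$ is real; thus the constant functions form a copy of $\RR$ embedded in $C$, and under this embedding $U^*(\tau)=U(\tau)\cap\RR$, while $Y^*(\tau)=Y(\tau)\cap\RR$ and $Y_0^*(\tau)$ is the connected component of $0$ in the latter. With this identification in hand, parts (i) and (ii) become purely topological consequences of the corresponding statements in Proposition \ref{zer}, and part (iii) follows by rerunning the argument of Proposition \ref{zer}(iii) along the real line.

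For (i), the inclusion $\RR\hookrightarrow C$ is continuous, and $U(\tau)$ is closed in $C$ by Proposition \ref{zer}(i); hence $U^*(\tau)=U(\tau)\cap\RR$ is closed in the subspace topology on $\RR$, which is the usual one. For (ii), Proposition \ref{zer}(ii) furnishes an open neighborhood $V$ of $0$ in $C$ with $V\subset U(\tau)$; its intersection $V\cap\RR$ is then a relatively open neighborhood of $0$ in $\RR$ contained in $U^*(\tau)$, so $0$ is an interior point of $U^*(\tau)$. The only subtlety worth flagging is that this last step uses crucially that an interior point of $U(\tau)$ in the ambient space $C$ remains interior upon passage to the subspace $\RR$, which is immediate but is the one place where the ambient statement is genuinely needed.

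For (iii), I would argue exactly as in Proposition \ref{zer}(iii), but working entirely within $\RR$. Since the nondegeneracy of $\beta_{c,\tau}$ is an open condition, $Y^*(\tau)$ is open in $\RR$, so the connected component $Y_0^*(\tau)$ is an open interval containing $0$. Given $c\in Y_0^*(\tau)$, the straight segment from $0$ to $c$ lies in $Y_0^*(\tau)$ and yields a continuous family of nondegenerate contravariant Hermitian forms; as such a family is positive definite throughout if and only if it is positive definite at one point, and it is positive definite at $c=0$ by Proposition \ref{zer}(ii), it remains positive definite at $c$. Hence $c\in U^*(\tau)$.

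I do not expect any genuine obstacle here: the entire content is inherited from Proposition \ref{zer}, and the work reduces to the elementary observation that closedness, interiority, and the connectedness-with-signature argument are all preserved under restriction to the line $\RR\subset C$. This is precisely why the statement can be asserted to follow ``analogously,'' and the proof is correspondingly short.
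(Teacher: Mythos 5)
Your proposal is correct and matches the paper's intent exactly: the paper gives no separate proof of this corollary, asserting only that it follows ``analogously to Proposition \ref{zer},'' which is precisely what you carry out by intersecting with the line of constant functions for (i)--(ii) and rerunning the connectedness argument for (iii). One small remark: your appeal to openness of $Y^*(\tau)$ in (iii) is both unjustified as stated (nondegeneracy involves infinitely many graded pieces, so openness needs the local finiteness of the singular hyperplanes) and unnecessary, since $Y_0^*(\tau)$ is a connected subset of $\RR$ and hence an interval, so the segment from $0$ to $c$ lies in it automatically; alternatively, (iii) follows at once from the observation that $Y_0^*(\tau)$ is a connected subset of $Y(\tau)$ containing $0$, hence lies in $Y_0(\tau)\subset U(\tau)$.
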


Let $W_{\rm ab}^\vee$ be the group of characters of $W$.
It is easy to see that $W_{\rm ab}^\vee$ acts on the space $C$
by multiplication. It also acts on representations of $W$ by
tensor multiplication.

\begin{proposition}\label{char}
For any $\chi\in W_{\rm ab}^\vee$ one has $U(\chi\otimes \tau)=\chi U(\tau)$.
\end{proposition}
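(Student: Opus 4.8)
The plan is to exhibit the operation $\tau\mapsto\chi\otimes\tau$ as a \emph{twist} on modules over the Cherednik algebra and to check that this twist carries the contravariant form to the contravariant form, so that it preserves unitarity. Concretely, given a module $M$ over $H_c(W,\h)$, I would form a new module $M^\chi$ on the same underlying space by leaving the actions of $\h$ and $\h^*$ unchanged and replacing the action of each $w\in W$ by $\chi(w)$ times the old one. Since $\chi$ is a homomorphism this is again an action of $\CC W$, and it is compatible with the (unchanged) conjugation relations $wxw^{-1}=w(x)$, $wyw^{-1}=w(y)$ and with $[x,x']=[y,y']=0$. The only relation to inspect is $[y,x]=(y,x)-\sum_{s\in S}c_s(y,\alpha_s)(x,\alpha_s^\vee)s$: expressing each old reflection-action $s$ in terms of the new one rescales the coefficient $c_s$ by $\chi$ at every reflection, so $M^\chi$ is a module over $H_{\chi c}(W,\h)$, where $\chi c$ is precisely the parameter obtained from $c$ by the multiplication action of $\chi$ on $C$ fixed before the statement. (That $\chi c$ again lies in $C$ holds because $\chi$, being a character of a finite group, is unitary, so $\chi(s^{-1})=\overline{\chi(s)}$.)

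Applying this to $M=M_c(\tau)$, the actions of $\h$ and $\h^*$, and hence all Dunkl operators, are untouched, while the lowest-weight $W$-representation $\tau$ is replaced by $\chi\otimes\tau$; thus $M_c(\tau)^\chi=M_{\chi c}(\chi\otimes\tau)$. Since the $\h$- and $\h^*$-actions agree for the two structures and the two $W$-actions differ only by the scalar $\chi(w)$ on each group element, a subspace is a submodule for one structure if and only if it is for the other. Hence $M_c(\tau)$ and $M_{\chi c}(\chi\otimes\tau)$ have the same submodule lattice; in particular their maximal proper submodules coincide, and the twist descends to an identification $L_c(\tau)\cong L_{\chi c}(\chi\otimes\tau)$ as vector spaces with identical $\h,\h^*$-actions.

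It remains to match the contravariant forms, which is the real content. Fix the positive-definite $W$-invariant form $(,)_\tau$ on $\tau$. Because $|\chi(w)|=1$ for all $w$, the computation $(\chi(w)\rho(w)v,\chi(w)\rho(w)v')_\tau=|\chi(w)|^2(\rho(w)v,\rho(w)v')_\tau=(v,v')_\tau$ shows that this same form is positive definite and invariant for the twisted action, so it is the normalizing degree-zero form for $\chi\otimes\tau$. Now compare, under the identification above, the form $\beta_{\chi c,\chi\otimes\tau}$ with $\beta_{c,\tau}$. The contravariance condition $(yv,v')=(v,Ty\cdot v')$ refers only to the unchanged actions of $\h$ and $\h^*$ and to the fixed operator $T$, so it holds for $\beta_{c,\tau}$ viewed on $M_{\chi c}(\chi\otimes\tau)$; $W$-invariance for the twisted action is preserved for the same reason as positivity (since $|\chi|=1$); and the degree-zero normalizations agree. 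By the uniqueness in Proposition \ref{contra}(i), $\beta_{\chi c,\chi\otimes\tau}=\beta_{c,\tau}$ under the identification, so one form is positive definite on the irreducible quotient exactly when the other is. This gives $c\in U(\tau)\iff\chi c\in U(\chi\otimes\tau)$, i.e. $U(\chi\otimes\tau)=\chi U(\tau)$.

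The verification that the twist respects the defining relations is routine. The step that I expect to be the crux, and which must be argued with care, is that the twist carries the \emph{canonical} contravariant form to the canonical one (not merely that it identifies the modules). This hinges on two facts that should be invoked explicitly: that $\chi$ is unitary, which is what lets positivity and $W$-invariance of the Hermitian forms survive the twist, and that the contravariant form is characterized \emph{uniquely} by conditions involving only $\h$, $\h^*$, and the degree-zero piece, so that the unchanged data force the two forms to coincide.
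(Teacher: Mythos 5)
Your proof is correct and is essentially the paper's argument: your module twist $M\mapsto M^\chi$ is precisely the pushforward (transport of structure) along the paper's isomorphism $i_\chi: H_c(W,\h)\to H_{\chi^{-1}c}(W,\h)$, $w\mapsto \chi^{-1}(w)w$, and you additionally spell out the step the paper leaves implicit, namely that unitarity of $\chi$ and the uniqueness clause of Proposition \ref{contra}(i) force the twist to carry the canonical contravariant form to the canonical one. One small caveat: expressing the old action of $s$ through the new one rescales $c_s$ by $\chi^{-1}(s)$ rather than $\chi(s)$, so your twist lands in $H_{\chi^{-1}c}$; this is immaterial (replace $\chi$ by $\chi^{-1}$ throughout, or note $\chi(s)=\pm1$ for real reflection groups), and the same $\chi$ versus $\chi^{-1}$ looseness is present in the paper's own proof.
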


\begin{proof}
The statement follows from the fact that we have a natural
isomorphism $i_\chi: H_c(W,\h)\to H_{\chi^{-1} c}(W,\h)$ given by the
formula $w\to \chi^{-1}(w)w$, $w\in W$, and $i_\chi(x)=x$,
$i_\chi(y)=y$, $x\in \h^*$, $y\in \h$. The pushforward by this isomorphism
maps $\tau$ to $\chi\otimes \tau$, which implies the statement.
\end{proof}

\begin{proposition}\label{deg1con}
If $c\in U(\tau)$ then for any irreducible representation
$\sigma$ of $W$ that occurs in $\tau\otimes \h^*$,
one has $h_c(\sigma)\le h_c(\tau)+1$.
\end{proposition}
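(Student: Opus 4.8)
The plan is to extract the inequality from positivity of the contravariant form $\beta_{c,\tau}$ on the degree-one part of $M_c(\tau)$. First I would recall that, as a graded $W$-module, $M_c(\tau)\cong\CC[\h]\otimes\tau$ with $W$ acting diagonally (the PBW identification), so its degree-one component is $\h^*\otimes\tau$, and every $\sigma$ occurring in $\tau\otimes\h^*$ appears there. Since $\beta_{c,\tau}$ is $W$-invariant, its restriction to $\h^*\otimes\tau$ is block-diagonal with respect to the isotypic decomposition; hence it suffices to view this restriction as a single $W$-equivariant Hermitian operator and read off its eigenvalue on each isotypic piece.

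Next I would compute that operator explicitly via contravariance. Fixing an orthonormal basis $y_i$ of $\h$ and the dual basis $x_i=Ty_i$ of $\h^*$, the identity $\beta_{c,\tau}(xv,v')=\beta_{c,\tau}(v,yv')$ for $x=Ty$ (which follows from the contravariance axiom together with conjugate symmetry) reduces a degree-one pairing to degree zero: since $y_iv'=0$ on $\tau$, one obtains $\beta_{c,\tau}(x_iv,x_jv')=(v,[y_i,x_j]v')_\tau$. Thus the Gram operator on $\h^*\otimes\tau$ is $D(x_j\otimes v)=\sum_i x_i\otimes[y_i,x_j]v$, which is precisely the restriction to degree one of the algebra element $\sum_i x_iy_i$ acting on $M_c(\tau)$.

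The key step is to identify the eigenvalues of $D$ using the element $\mathbf h$. Writing $\mathbf h=\sum_i x_iy_i+z$ with $z=\tfrac{\dim\h}{2}-\sum_{s}\tfrac{2c_s}{1-\lambda_s}s\in\CC W$, and using that $\mathbf h$ acts on all of degree one by the scalar $h_c(\tau)+1$, I get $D=(h_c(\tau)+1)\,\mathrm{Id}-z$ on $\h^*\otimes\tau$. Here $z$ acts through the diagonal $W$-action, so on the $\sigma$-isotypic component it is the scalar $h_c(\sigma)$ by the very definition of $h_c(\sigma)$. Hence $D$ acts on the $\sigma$-isotypic component by $h_c(\tau)+1-h_c(\sigma)$.

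Finally, if $c\in U(\tau)$ then $\beta_{c,\tau}$ is nonnegative on $M_c(\tau)$ (as noted in the proof of Proposition~\ref{zer}(i)), so $D\ge 0$, forcing $h_c(\tau)+1-h_c(\sigma)\ge0$ for every $\sigma\subseteq\tau\otimes\h^*$, which is the claim. I expect the only delicate points to be bookkeeping: verifying that multiplication by $x_i=Ty_i$ is adjoint to the action of $y_i$ (so that $D$ is genuinely self-adjoint and really represents $\beta_{c,\tau}$, up to the chosen conjugation convention), and confirming that $z$ acts on $M_c(\tau)$ purely through the diagonal $W$-action rather than through any Dunkl-operator correction. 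Both become routine once the identification $M_c(\tau)\cong\CC[\h]\otimes\tau$ is in place.
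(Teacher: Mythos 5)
Your proof is correct, and it shares the paper's overall strategy---restrict $\beta_{c,\tau}$ to the degree-one component $\tau\otimes\h^*$, identify the Gram operator there relative to the positive definite form $\beta_{0,\tau}$, and read off its eigenvalue on the $\sigma$-isotypic part---but the key eigenvalue computation is done by a genuinely different mechanism. The paper proves Lemma~\ref{deg1} by computing the Gram operator $F_{c,\tau,1}$ explicitly from the commutation relations, arriving at formula~(\ref{Fc1}), and then in the proof of the proposition pins down the scale factor $\ell(c)$ of $\beta_{c,\tau}|_\sigma$ indirectly: $\ell(c)$ is linear nonhomogeneous in $c$, positive at $c=0$ by Proposition~\ref{zer}(ii), and vanishes exactly where $\sigma$ consists of singular vectors, forcing $\ell(c)=K\left(1+h_c(\tau)-h_c(\sigma)\right)$ with $K>0$. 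You instead write $\sum_i x_iy_i=\bold h - z$, where $z=\frac{\dim\h}{2}-\sum_{s\in S}\frac{2c_s}{1-\lambda_s}s$ is central in $\CC W$, and use that $\bold h$ acts on the whole degree-one piece by $h_c(\tau)+1$ while $z$ acts on the $\sigma$-isotypic component by $h_c(\sigma)$ through the diagonal $W$-action; this yields the eigenvalue $1+h_c(\tau)-h_c(\sigma)$ in one stroke, with no explicit Dunkl-commutator formula and no linearity-in-$c$ argument. Your route is cleaner for this particular statement, and it handles a $\sigma$ occurring with multiplicity greater than one transparently, since $z$ is scalar on the entire isotypic component; what the paper's approach buys is reusable machinery: the explicit operator $F_{c,\tau,1}$, its higher-degree generalization in Proposition~\ref{recfor}, Corollary~\ref{lin}, and the singular-vector criterion of Lemma~\ref{deg1} are all invoked repeatedly later (the rank-one and dihedral computations, and Lemma~\ref{fprod}). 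The one step you should make explicit rather than cite in passing is that $c\in U(\tau)$ implies $\beta_{c,\tau}\ge 0$ on all of $M_c(\tau)$; this holds because the kernel of $\beta_{c,\tau}$ is precisely the maximal proper submodule $J_c(\tau)$ (Proposition~\ref{contra}(ii)), so nonnegativity on the quotient $L_c(\tau)$ lifts to the Verma module, exactly as used in the proof of Proposition~\ref{zer}(i).
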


\begin{proof}
We will need the following easy lemma (which is probably known,
but we give its proof for reader's convenience).

\begin{lemma}\label{deg1}
Let $\sigma\subset \tau\otimes \h^*$ be an irreducible subrepresentation.
Let us regard $\sigma$ as sitting in degree $1$ of $M_c(\tau)$.
Then the elements $y_i$ act on $\sigma$ by $0$ (i.e. $\sigma$
consists of singular vectors) if and only if
$h_c(\sigma)-h_c(\tau)=1$.
\end{lemma}

\begin{proof} The action of $y_i$ on the degree 1 part of
$M_c(\tau)$ can be viewed as an operator $\tau\otimes \h^*\otimes
\h\to \tau$, or, equivalently, as an endomorphism $F_{c,\tau,1}$ of
$\tau\otimes \h^*$. This endomorphism is easy to compute, and it
is given by the formula
\begin{equation}\label{Fc1}
F_{c,\tau,1}=1-\sum_{s\in S}c_s s\otimes (\alpha_s\otimes \alpha_s^\vee)=
1-\sum_{s\in S}\frac{2c_s}{1-\lambda_s} s\otimes (1-s).
\end{equation}
Thus $F_{c,\tau,1}$ acts on $\sigma$ by the scalar
$$
1+h_c(\tau)-h_c(\sigma).
$$
The action of $y_i$ on $\sigma$ is zero iff this scalar is zero,
which implies the lemma.
\end{proof}

Now look at the restriction of the form $\beta_{c,\tau}$ to an irreducible
$W$-subrepresentation $\sigma$ sitting in the degree 1 part
$\tau\otimes \h^*$ of $M_c(\tau)$. This restriction is
obviously of the form $\ell(c)(,)_\sigma$, where
$\ell(c)$ is a linear nonhomogeneous function of $c$.
Since $\ell(c)$ is positive for $c=0$ (by Proposition
\ref{zer}(ii)), we conclude, using Lemma
\ref{deg1}, that $\ell(c)=K(1+h_c(\tau)-h_c(\sigma))$,
where $K>0$. This implies the statement.
\end{proof}

Let $D_\tau$ be the eigenvalue of $\sum_{s\in S}s$ on
$\tau$.

\begin{corollary}\label{deg1con1}
If $c\in U^*(\tau)$ then for any $\sigma$ contained in
$\tau\otimes \h^*$, one has
$$
c(D_\tau-D_\sigma)\le 1.
$$
\end{corollary}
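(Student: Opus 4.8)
The plan is to deduce this as the specialization of Proposition \ref{deg1con} to the case of a constant real parameter, that is, to $c\in U^*(\tau)\subset U(\tau)$. Proposition \ref{deg1con} already provides the inequality $h_c(\sigma)\le h_c(\tau)+1$ for every irreducible $\sigma$ occurring in $\tau\otimes\h^*$, so the entire content that remains is to rewrite the difference $h_c(\sigma)-h_c(\tau)$ in terms of the scalars $D_\tau$ and $D_\sigma$. In other words, I expect the corollary to be a bookkeeping reformulation of the proposition rather than a genuinely new estimate.

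First I would recall the formula
$$
h_c(\tau)=\frac{\dim\h}{2}-\sum_{s\in S}\frac{2c_s}{1-\lambda_s}\, s|_\tau,
$$
and observe that the operator $\sum_{s\in S}\frac{2c_s}{1-\lambda_s}\,s$ is central in $\CC W$, since $S$ is a union of conjugacy classes and both $c_s$ and $\lambda_s$ are class functions; it therefore acts on the irreducible module $\tau$ by a single scalar, and likewise $\sum_{s\in S}s$ acts by the scalar $D_\tau$ by definition. The key step is then to note that for a constant parameter $c$ in the real reflection group setting every reflection is an involution, so $\lambda_s=-1$ and $\tfrac{2c_s}{1-\lambda_s}=\tfrac{2c}{2}=c$ for all $s$. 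The central operator above thus collapses to $c\sum_{s\in S}s$, yielding $h_c(\tau)=\tfrac{\dim\h}{2}-cD_\tau$ and, identically, $h_c(\sigma)=\tfrac{\dim\h}{2}-cD_\sigma$. Subtracting gives
$$
h_c(\sigma)-h_c(\tau)=c(D_\tau-D_\sigma),
$$
and substituting this into $h_c(\sigma)\le h_c(\tau)+1$ from Proposition \ref{deg1con} produces exactly $c(D_\tau-D_\sigma)\le 1$.

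I do not anticipate a real obstacle, as the argument is a one-line consequence of the preceding proposition once the weights are computed. The single point deserving care is the reduction $\tfrac{2c_s}{1-\lambda_s}=c$: it relies on each reflection having nontrivial eigenvalue $-1$, which is where the hypotheses that $c$ is constant and that the reflections are of order two are used. Without this, the relevant central element would not be proportional to $\sum_{s\in S}s$, so $D_\tau$ alone would not encode $h_c(\tau)$ and the clean linear inequality in $D_\tau-D_\sigma$ would fail; I would therefore make explicit that the corollary is read in this constant, order-two regime.
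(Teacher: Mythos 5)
Your proof is correct and is precisely the argument the paper leaves implicit: Corollary \ref{deg1con1} is stated without proof as the specialization of Proposition \ref{deg1con} to constant real $c$, via the identity $h_c(\tau)=\frac{\dim\h}{2}-cD_\tau$ and its consequence $h_c(\sigma)-h_c(\tau)=c(D_\tau-D_\sigma)$. Your closing caveat deserves emphasis, because it is more than prudence: the reduction $\frac{2c_s}{1-\lambda_s}=c$ genuinely requires $\lambda_s=-1$ for every reflection, and without that hypothesis the corollary as printed (it sits in Section 3, before the real-reflection assumption of Section 4 is imposed) is actually false, not merely unproved by this method. For instance, take $W=\ZZ/3\ZZ$ acting on $\h=\CC$ and $\tau=\h$ viewed as a character of $W$; then $\sigma=\tau\otimes\h^*$ is the trivial character, $D_\tau=-1$, $D_\sigma=2$, and $h_c(\sigma)-h_c(\tau)=-2c$ for constant real $c$, so at $c=-1/2$ the degree-one part of $M_c(\tau)$ is singular by Lemma \ref{deg1}, $L_{-1/2}(\tau)=\tau$ is one-dimensional and unitary, hence $-1/2\in U^*(\tau)$, while $c(D_\tau-D_\sigma)=(-\tfrac12)(-3)=\tfrac32>1$. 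Since every application of the corollary in the paper (Proposition \ref{bound}, Corollary \ref{num}) concerns real reflection groups, your reading of the statement in the constant, order-two regime is the right one.
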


\subsection{The operator $F_{c,\tau,m}$}

It is useful to generalize the operator $F_{c,\tau,1}$ acting in
degree $1$ to higher degrees. Namely,
for $c\in C$, we have a unique selfadjoint operator $F_{c,\tau}$ on
$M_c(\tau)=\tau \otimes S\h^*$,
given by the formula
$\beta_{c,\tau}(v,v')=\beta_{0,\tau}(F_{c,\tau}v,v')$.
We have $F_{c,\tau}=\oplus_{m\ge 0}F_{c,\tau,m}$, where $F_{c,\tau,m}:
\tau\otimes S^m\h^*
\to \tau\otimes S^m\h^*$ is an operator which is polynomial in $c$
of degree at most $m$.
It is clear that if $F_{c,\tau,m}$ is independent of $c$, then
$F_{c,\tau,m}=1$, because $F_{0,\tau,m}=1$.
Also, we have the following recursive formula for $F_{c,\tau,m}$:

\begin{proposition}\label{recfor}
Let $a_1,...,a_m\in \h^*$, and $v\in \tau$.
Then
$$
F_{c,\tau,m}(a_1...a_mv)=
$$
$$
\frac{1}{m}\sum_{j=1}^ma_jF_{c,\tau,m-1}(a_1...a_{j-1}a_{j+1}...a_mv)-
$$
$$
-\frac{1}{m}\sum_{j=1}^m\sum_{s\in
S}\frac{2c_s}{1-\lambda_s}(1-s)(a_j)F_{c,\tau,m-1}(a_1...a_{j-1}s(a_{j+1}...a_mv))
$$
\end{proposition}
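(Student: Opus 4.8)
The plan is to turn the defining relation $\beta_{c,\tau}(v,v')=\beta_{0,\tau}(F_{c,\tau}v,v')$ into a recursion in the polynomial degree by using contravariance to pull one factor $a_j\in\h^*$ off the monomial $a_1\cdots a_m v$. Four facts will be used repeatedly: (1) contravariance of $\beta_{c,\tau}$, which says that for $a\in\h^*$ the multiplication operator $\hat a$ and the action $D_{y}$ of $y=T^{-1}a\in\h$ on $M_c(\tau)$ are adjoint, $\beta_{c,\tau}(\hat a u,w)=\beta_{c,\tau}(u,D_{y}w)$; (2) the same statement at $c=0$, where $D_y$ degenerates to the constant-coefficient derivative $\partial_y$, so that $\partial_y$ is the $\beta_{0,\tau}$-adjoint of $\hat a$; (3) nondegeneracy of $\beta_{0,\tau}$; and (4) $W$-equivariance of $F_{c,\tau}$, which is automatic because $F_{c,\tau}$ is built from the two $W$-invariant forms, so $sF_{c,\tau}=F_{c,\tau}s$ for every $s\in S$.

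First I would isolate the reflection part of the Dunkl operator. Writing $D_y=\partial_y+R_y$ and combining $D_y(xw)=(y,x)w+xD_yw-\sum_{s\in S}c_s(y,\alpha_s)(x,\alpha_s^\vee)s(w)$ with $\partial_y(xw)=(y,x)w+x\partial_yw$ shows that $R_y$ lowers degree by one and obeys $R_y(xw)=xR_y(w)-\sum_{s\in S}c_s(y,\alpha_s)(x,\alpha_s^\vee)s(w)$, with $R_y$ vanishing in degree $0$. Solving this recursion gives the closed form
$$
R_y(x_1\cdots x_k v)=-\sum_{i=1}^{k}\sum_{s\in S}c_s(y,\alpha_s)(x_i,\alpha_s^\vee)\,x_1\cdots x_{i-1}\,s(x_{i+1}\cdots x_k v),
$$
in which a reflection is applied only to the tail of the monomial past the contracted factor.

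The backbone of the argument is then the exact operator identity, valid for every $a\in\h^*$ and every $u\in\tau\otimes S^{m-1}\h^*$,
$$
F_{c,\tau,m}(a\,u)=a\,F_{c,\tau,m-1}(u)+R_{y_a}^{*}\,F_{c,\tau,m-1}(u),\qquad y_a=T^{-1}a,
$$
where $R_{y_a}^{*}$ denotes the $\beta_{0,\tau}$-adjoint of $R_{y_a}$ (raising degree by one). I would derive it by the chain $\beta_{0,\tau}(F_{c,\tau}\,\hat a u,w)=\beta_{c,\tau}(\hat a u,w)=\beta_{c,\tau}(u,D_{y_a}w)=\beta_{0,\tau}(F_{c,\tau}u,D_{y_a}w)$ and then splitting $D_{y_a}=\partial_{y_a}+R_{y_a}$: fact (2) turns the $\partial_{y_a}$ piece into $\beta_{0,\tau}(\hat a F_{c,\tau}u,w)$, and the definition of the adjoint turns the $R_{y_a}$ piece into $\beta_{0,\tau}(R_{y_a}^{*}F_{c,\tau}u,w)$; since $w$ is arbitrary and $\beta_{0,\tau}$ is nondegenerate, the identity follows. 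Because the left-hand side $F_{c,\tau,m}(a_1\cdots a_m v)$ does not depend on which factor is named $a$, I would apply this identity once for each choice $a=a_j$, $u=a_1\cdots\widehat{a_j}\cdots a_m v$, and average over $j$; the polynomial pieces combine into the first term $\frac1m\sum_j a_jF_{c,\tau,m-1}(a_1\cdots\widehat{a_j}\cdots a_m v)$ of the claimed formula.

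What remains, and what I expect to be the main obstacle, is to identify $\frac1m\sum_j R_{y_j}^{*}F_{c,\tau,m-1}(a_1\cdots\widehat{a_j}\cdots a_m v)$ with the stated second term. For this I would compute $R_{y_a}^{*}$ from the closed form above, using that each reflection $s$ is $\beta_{0,\tau}$-unitary and that $\hat b$ has $\beta_{0,\tau}$-adjoint $\partial_{T^{-1}b}$; the tail-twisting in $R_y$ is exactly what produces the partially twisted argument $a_1\cdots a_{j-1}s(a_{j+1}\cdots a_m v)$ once the reflection is carried across $F_{c,\tau,m-1}$ by equivariance (fact (4)). Matching the scalar is the delicate part: one must pass the coefficient $c_s(y_a,\alpha_s)(\,\cdot\,,\alpha_s^\vee)$ through the conjugations coming from the antilinearity of $T$ and from $\beta_{0,\tau}$, use $c=c^\dagger$ together with the relabeling $s\mapsto s^{-1}$, and finally invoke the normalization identity $(1-s)(a)=\tfrac{1-\lambda_s}{2}(a,\alpha_s^\vee)\alpha_s$, equivalently $(a,\alpha_s^\vee)\alpha_s=\tfrac{2}{1-\lambda_s}(1-s)(a)$, to convert the $\alpha_s$-data into the factor $\tfrac{2c_s}{1-\lambda_s}(1-s)(a_j)$. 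As a consistency check one can specialize to $m=1$, where the recursion collapses to $F_{c,\tau,1}(a_1v)=a_1v-\sum_{s}\tfrac{2c_s}{1-\lambda_s}(1-s)(a_1)\otimes sv$, recovering formula \eqref{Fc1} and pinning down all constants.
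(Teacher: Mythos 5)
Your backbone identity is correct, and it is worth noting that it is precisely the $\beta_{0,\tau}$-adjoint of the identity the paper's proof starts from: the paper establishes $F_{c,\tau,m-1}(y\,u)=\partial_y F_{c,\tau,m}(u)$ (Dunkl action of $y$ on the left, ordinary derivative on the right) by the same adjointness chain you use, read in the transposed direction; and your averaging over $j$ plays the role of the paper's Euler identity $\sum_i x_i\partial_{y_i}=m$ in degree $m$. The problem is that your argument stops exactly where the content of the proposition lies: the identification of $\frac1m\sum_j R_{y_{a_j}}^{*}F_{c,\tau,m-1}(a_1\cdots\widehat{a_j}\cdots a_m v)$ with the stated second sum is left as a plan, and the plan as sketched fails on two concrete points. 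First, the matching cannot be made term by term in $j$, which is what your phrase ``the tail-twisting in $R_y$ is exactly what produces the partially twisted argument'' asks for. Take $W=\mathbb{Z}/2$ acting on $\h=\mathbb{C}$, $\tau=\mathbb{C}$, and all $a_j=x$: your $j$-th terms $R_{y}^{*}F_{c,\tau,m-1}(x^{m-1}v)$ are all equal to one another, while the stated $j$-th summand equals $-2c(-1)^{m-j}x\,F_{c,\tau,m-1}(x^{m-1}v)$ and alternates in sign with $j$ (for $m$ even each stated summand is nonzero while $R_y^{*}F_{c,\tau,m-1}(x^{m-1}v)=0$); only the sums over $j$ agree. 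Second, $R_y^{*}$ cannot be ``computed from the closed form'' by the rules you list (unitarity of $s$, $\hat b^{*}=\partial_{T^{-1}b}$, equivariance of $F_{c,\tau}$): $R_y$ is built from divided-difference operators $\alpha_s^{-1}(1-s)$, and the $\beta_{0,\tau}$-adjoint of such an operator is not a reflection-differential operator with polynomial coefficients but a non-local operator with degree-dependent coefficients. In the same rank-one example, $R_y^{*}(x^k v)=-\tfrac{2c}{k+1}x^{k+1}v$ for $k$ even and $0$ for $k$ odd; note that the factor $\tfrac1m$ in the formula you are trying to prove arises from this $\tfrac{1}{k+1}$, not from your averaging (which merely cancels $m$ identical terms), so no amount of carrying reflections across $F_{c,\tau}$ by equivariance can reproduce it.

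The repair is to keep your adjointness chain but transpose it, so that the Dunkl operator lands on the explicit monomial rather than on the test vector, and no adjoint of $R_y$ ever appears: pairing against arbitrary $w$ and using contravariance of $\beta_{c,\tau}$ and then of $\beta_{0,\tau}$ gives $F_{c,\tau,m-1}(y\,u)=\partial_y F_{c,\tau,m}(u)$ for all $y\in\h$. Euler's identity in degree $m$ then yields
$$
F_{c,\tau,m}(u)=\frac1m\sum_i x_i\,F_{c,\tau,m-1}(y_i u),
$$
and now $y_i(a_1\cdots a_m v)$ is computed directly from the commutation relations:
$$
y_i(a_1\cdots a_m v)=\sum_j (y_i,a_j)\,a_1\cdots\widehat{a_j}\cdots a_m v-\sum_j\sum_{s\in S}c_s(y_i,\alpha_s)(a_j,\alpha_s^\vee)\,a_1\cdots a_{j-1}s(a_{j+1}\cdots a_m v).
$$
Summing over $i$ with $\sum_i(y_i,a_j)x_i=a_j$ and $\sum_i(y_i,\alpha_s)x_i=\alpha_s$, and using $c_s(a_j,\alpha_s^\vee)\alpha_s=\tfrac{2c_s}{1-\lambda_s}(1-s)(a_j)$, produces both sums of the proposition at once. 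This is the paper's proof; your backbone identity is its mirror image, but the mirror image shifts the entire computation onto the operator $R_y^{*}$, which has no usable closed form.
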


\begin{remark} Note that for $m=1$ this formula reduces to
formula (\ref{Fc1}).
\end{remark}

\begin{proof}
It is easy to see that for any $y\in \h$, one has
$$
F_{c,\tau,m-1}(ya_1...a_mv)=\partial_y F_{c,\tau,m}(a_1...a_mv).
$$
Therefore, we find
$$
F_{c,\tau,m}(u)=\frac{1}{m}\sum_i x_iF_{c,\tau,m-1}(y_iu).
$$
Taking $u=a_1...a_mv$ and computing $y_iu$ using the commutation
relations of the rational Cherednik algebra, we get the result.
\end{proof}

\begin{corollary}\label{lin}
Suppose that $F_{c,\tau,i}$ is constant (and hence equals $1$)
for $i=1,...,m-1$. Then on every irreducible
$W$-subrepresentation $\sigma$ of $\tau\otimes S^m\h^*$,
the operator $F_{c,\tau,m}$ acts by the scalar
$1+\frac{h_c(\tau)-h_c(\sigma)}{m}$.
\end{corollary}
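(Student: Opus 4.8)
The plan is to substitute the hypothesis $F_{c,\tau,i}=1$ for $i<m$ into the recursion of Proposition~\ref{recfor} and then recognize the resulting operator as a scalar multiple of the central element $z:=\sum_{s\in S}\frac{2c_s}{1-\lambda_s}s$, which is central because $c$ is $W$-invariant and $\lambda_s$ is a class function, so the weights $\frac{2c_s}{1-\lambda_s}$ form a class function on the union of reflection classes $S$. First I would set $F_{c,\tau,m-1}=\mathrm{id}$ in the recursive formula. In the commutative algebra $S\h^*$ each summand $a_jF_{c,\tau,m-1}(a_1\cdots a_{j-1}a_{j+1}\cdots a_m v)$ of the first sum equals $a_1\cdots a_m v$, so that sum collapses to the identity operator. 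This leaves $F_{c,\tau,m}=\mathrm{id}-\frac1m\sum_{s\in S}\frac{2c_s}{1-\lambda_s}G_s$, where $G_s(a_1\cdots a_m v)=\sum_{j=1}^m(1-s)(a_j)\,a_1\cdots a_{j-1}\,s(a_{j+1}\cdots a_m v)$; in particular $F_{c,\tau,m}$ is now visibly affine in $c$.

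The key step --- and the only place requiring a genuine computation --- is to telescope $G_s$. Writing $b_j:=sa_j$ and $P_j:=a_1\cdots a_j b_{j+1}\cdots b_m$, one checks that $(1-s)(a_j)\,a_1\cdots a_{j-1}\,s(a_{j+1}\cdots a_m v)=(P_j-P_{j-1})(sv)$, so that summing over $j$ telescopes to $(P_m-P_0)(sv)=(a_1\cdots a_m)(sv)-(sa_1)\cdots(sa_m)(sv)$. In operator form this says $G_s=(\mathrm{id}_{S^m\h^*}\otimes s|_\tau)-\widehat{s}$, where $\widehat{s}$ denotes the diagonal action of $s$ on $\tau\otimes S^m\h^*$. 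Summing against the weights $\frac{2c_s}{1-\lambda_s}$ therefore gives $\sum_{s\in S}\frac{2c_s}{1-\lambda_s}G_s=(z|_\tau)\,\mathrm{id}-z^{\mathrm{diag}}$, where $z|_\tau$ is the scalar by which the central element $z$ acts on the irreducible $\tau$ and $z^{\mathrm{diag}}$ is $z$ acting diagonally on $\tau\otimes S^m\h^*$.

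Finally I would invoke centrality once more: on any irreducible $W$-subrepresentation $\sigma\subset\tau\otimes S^m\h^*$ the diagonal element $z^{\mathrm{diag}}$ acts by the scalar $z|_\sigma$. Hence $F_{c,\tau,m}$ acts on $\sigma$ by $1-\frac1m(z|_\tau-z|_\sigma)$, and substituting $z|_\tau=\frac{\dim\h}{2}-h_c(\tau)$ and $z|_\sigma=\frac{\dim\h}{2}-h_c(\sigma)$ turns this into $1+\frac{h_c(\tau)-h_c(\sigma)}{m}$, as claimed. As a by-product the argument shows that $F_{c,\tau,m}$ genuinely acts by a scalar on each such $\sigma$ --- even when $\sigma$ occurs with higher multiplicity in $\tau\otimes S^m\h^*$ --- since the central element $z^{\mathrm{diag}}$ is automatically scalar on every irreducible summand. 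The only real obstacle is spotting the telescoping collapse of $G_s$; once the first sum is seen to be the identity and $G_s$ is recognized as $(\mathrm{id}\otimes s|_\tau)-\widehat{s}$, the identification with $z$ and the final scalar are immediate.
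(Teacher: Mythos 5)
Your proof is correct and is essentially the paper's own (implicit) argument: the paper states Corollary \ref{lin} as an immediate consequence of Proposition \ref{recfor}, and filling it in amounts to exactly your two steps --- the telescoping identity $\sum_j(1-s)(a_j)\,a_1\cdots a_{j-1}\,s(a_{j+1}\cdots a_m v)=[a_1\cdots a_m,s]v$ (which the paper itself uses, written as a commutator, in the proof of Lemma \ref{fprod}) and the centrality of $\sum_{s\in S}\frac{2c_s}{1-\lambda_s}s$, generalizing the $m=1$ computation of formula \eqref{Fc1} and Lemma \ref{deg1}. Nothing is missing; your remark that the scalar action holds on each copy of $\sigma$ even with multiplicity is a correct and worthwhile observation.
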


\subsection{The rank 1 case}
Suppose $\h$ is 1-dimensional, and $W=\Bbb Z/m\Bbb Z$, acting
by $j\to \lambda^{-j}$, where $\lambda$ is a primitive $m$-th
root of unity. In this case all irreducible
representations of $W$ are 1-dimensional, so thanks to
Proposition \ref{char}, to describe the sets $U(\tau)$, it
suffices to describe the set $U:=U(\Bbb C)$ for the trivial
representation $\Bbb C$. Let us find $U$.

The module $M_c(\Bbb C)$ has basis $x^n$, $n\ge 0$.
Let $a_n:=\beta_{c,\Bbb C}(x^n,x^n)$ (we normalize the form so
that $a_0=1$). It is easy to compute that
$$
a_n=a_{n-1}(n-2\sum_{j=1}^{m-1}\frac{1-\lambda^{jn}}{1-\lambda^j}c_j),
$$
where $c_j=c(j)$, $j=1,...,m-1$.

Let
$$
b_n:=2\sum_{j=1}^{m-1}\frac{1-\lambda^{jn}}{1-\lambda^j}c_j,
$$
$n\ge 0$ (note that $b_0=0$, and $b_{n+m}=b_n$).
If $c\in C$ then $b_j$ are real, and
it is easy to see that $b_1,...,b_{m-1}$
form a linear real coordinate system on
$C$ (this follows from the easy fact that the matrix with entries
$\frac{1-\lambda^{jn}}{1-\lambda^j}$, $1\le j,n\le m-1$,
is nondegenerate).

This implies the following proposition.

\begin{proposition}
(i) $M_c(\Bbb C)$ is irreducible iff $n-b_n\ne 0$
for any $n\ge 1$. It is unitary iff $n-b_n>0$ for all
$n=1,...,m-1$.

(ii) Assume that for a given $c$, $r$ is the smallest positive integer
such that $r=b_r$. Then $L_c(\Bbb C)$ has dimension $r$ (which
can be any number not divisible by $m$), and basis
$1,x,...,x^{r-1}$. This representation can be unitary only if
$r<m$, and in this case it is unitary iff $n-b_n>0$ for $n<r$.
\end{proposition}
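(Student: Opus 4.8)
The plan is to analyze the sequence $a_n = \beta_{c,\Bbb C}(x^n,x^n)$ directly, since the module $M_c(\Bbb C)$ is one-dimensional in each degree (spanned by $x^n$) and the entire contravariant form is diagonal in this basis. The recursion $a_n = a_{n-1}(n-b_n)$ already stated in the excerpt does all the heavy lifting, so both parts reduce to reading off sign and vanishing information from the factors $n-b_n$.

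For part (i), I would first observe that $M_c(\Bbb C)$ is irreducible if and only if $\beta_{c,\Bbb C}$ is nondegenerate, which by Proposition \ref{contra}(ii) happens exactly when no $a_n$ vanishes. Since $a_n = \prod_{k=1}^n (k-b_k)$ and $a_0=1$, the form is nondegenerate iff $k-b_k \ne 0$ for all $k\ge 1$; a zero factor $n-b_n=0$ forces $a_n=0$ and hence a singular vector in degree $n$. For unitarity, I would note that $a_n>0$ for all $n$ is required, and since $a_n = a_{n-1}(n-b_n)$ with $a_0=1>0$, positivity of all $a_n$ is equivalent to $n-b_n>0$ for all $n\ge 1$. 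The key simplification is periodicity: because $b_{n+m}=b_n$, the factor is $n-b_n = (n \bmod m) + m\lfloor n/m\rfloor - b_{n\bmod m}$, and for large $n$ the linear growth $m\lfloor n/m\rfloor$ dominates, so once we are in the unitary regime only finitely many conditions can fail. More precisely, writing $n = qm + r$ with $0\le r<m$, we have $n-b_n = qm + (r - b_r)$, and since $b_0=0$ gives factor $qm>0$ for $r=0$, the binding constraints are exactly $n-b_n>0$ for $n=1,\dots,m-1$ (the $q=0$ block); all later factors are these same values shifted up by positive multiples of $m$, hence automatically positive once the first block is.

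For part (ii), suppose $r$ is the smallest positive integer with $r=b_r$. Then $a_r = a_{r-1}(r-b_r)=0$ while $a_0,\dots,a_{r-1}$ are nonzero, so the maximal proper submodule $J_c(\Bbb C)$ is spanned by $x^r, x^{r+1},\dots$ and $L_c(\Bbb C)$ has basis $1,x,\dots,x^{r-1}$, giving $\dim L_c(\Bbb C)=r$. That $r$ can be any positive integer not divisible by $m$ follows from the coordinate description: since $b_0=0$ and $b_{km}=0$, one never has $km=b_{km}$ for $k\ge 1$, so $r$ is never a multiple of $m$, and conversely choosing $c$ so that $b_r=r$ for a prescribed non-multiple $r$ is possible because $b_1,\dots,b_{m-1}$ form a free real coordinate system on $C$. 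For unitarity of $L_c(\Bbb C)$, the form on the quotient is $a_1,\dots,a_{r-1}$ (with $a_0=1$), so positive definiteness is exactly $a_n>0$ for $n=1,\dots,r-1$, i.e. $n-b_n>0$ for $n<r$; and the constraint $r<m$ arises because if $r\ge m$ then the block structure forces some earlier factor $n-b_n$ with $n<r$ to be nonpositive (using $b_n = b_{n-m}$ and the relation $r=b_r$), which I would verify by comparing $n-b_n$ across a period.

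The main obstacle I anticipate is the bookkeeping in part (ii) showing that $r\ge m$ is incompatible with $L_c(\Bbb C)$ being unitary. This requires showing that the equation $r=b_r$ together with $n-b_n>0$ for all $n<r$ cannot hold when $r\ge m$: the point is that $b_r=b_{r-m}$ forces $r-m = b_{r-m} - m < b_{r-m}$ hence $(r-m)-b_{r-m}<0$, contradicting the positivity required at the index $n=r-m<r$. Making this comparison clean and handling the edge behavior when $r-m=0$ (where $b_0=0$ gives a strictly positive factor, ruling out $r=m$ outright) is the delicate step, but it reduces to the single periodicity identity $b_{n+m}=b_n$ already recorded.
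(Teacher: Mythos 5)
Your proposal is correct and follows essentially the same route as the paper, which states the proposition as a direct consequence of the recursion $a_n=a_{n-1}(n-b_n)$, the diagonality of the form in the basis $\{x^n\}$, and the periodicity $b_{n+m}=b_n$ with $b_0=0$. Your spelled-out bookkeeping (reduction of positivity to the first period block, the kernel description giving the basis $1,x,\dots,x^{r-1}$, and the comparison $b_{r-m}=b_r=r$ ruling out unitarity for $r>m$) is exactly the argument the paper leaves implicit.
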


\begin{corollary}
$U$ is the set of all vectors $(b_1,...,b_{m-1})$ such that
in the vector $(1-b_1,2-b_2,...,m-1-b_{m-1})$, all the entries
preceding the first zero entry are positive (if there is no
zero entries, all entries must be positive).
\end{corollary}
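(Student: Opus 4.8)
The plan is to translate the conditions of the preceding Proposition into an explicit description of the coordinate vector $(b_1,\dots,b_{m-1})$. Recall that I have already established that the $b_j$ form a linear real coordinate system on $C$, so describing $U$ amounts to describing which tuples $(b_1,\dots,b_{m-1})$ arise from unitary representations. The key structural fact from the preceding Proposition is the dichotomy governed by the first integer $r$ with $r=b_r$: if no such $r$ exists in the range $1\le r\le m-1$ (equivalently $n-b_n\neq 0$ for all $n=1,\dots,m-1$, recalling $b_{n+m}=b_n$ and $b_0=0$), then $M_c(\Bbb C)$ is irreducible and is unitary precisely when $n-b_n>0$ for all $n=1,\dots,m-1$; otherwise $L_c(\Bbb C)$ is finite-dimensional of dimension $r$, and is unitary iff $n-b_n>0$ for all $n<r$.

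First I would argue that the existence of a smallest positive $r\le m-1$ with $r=b_r$ is exactly the statement that the vector $(1-b_1,\dots,m-1-b_{m-1})$ has a zero entry, and that the first such zero occurs in position $r$. Here I should note the one subtlety: the smallest positive integer $r$ with $r=b_r$ could in principle exceed $m-1$ (since $r$ can be any number not divisible by $m$), but in that regime the preceding Proposition already tells us $L_c(\Bbb C)$ is \emph{not} unitary because $r<m$ fails; such a $c$ contributes nothing to $U$ and correspondingly has all entries of $(1-b_1,\dots,m-1-b_{m-1})$ nonzero with at least one negative, so the ``all entries positive'' clause of the Corollary correctly excludes it. This bookkeeping, reconciling the range $1,\dots,m-1$ of the coordinates against the possibly larger value of $r$, is the one place requiring care.

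Next I would assemble the two cases into the single uniform statement. In the no-zero case, unitarity requires all $n-b_n>0$ for $n=1,\dots,m-1$, which is exactly ``all entries positive'' when there is no zero entry. In the case where a zero first appears at position $r\le m-1$, unitarity requires $n-b_n>0$ for $n<r$, i.e.\ all entries \emph{preceding} the first zero are positive, and imposes no sign condition on entries after the zero (consistent with the fact that $L_c(\Bbb C)$ has dimension $r$ and the form is tested only on $1,x,\dots,x^{r-1}$). Combining these gives precisely the Corollary's criterion: all entries preceding the first zero entry must be positive, and if there is no zero entry then all entries must be positive.

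The main obstacle, such as it is, is not analytic but organizational: I must verify that the condition $r<m$ for unitarity in part (ii) of the Proposition is automatically encoded by restricting attention to the coordinate range $1\le n\le m-1$, so that no separate hypothesis on $r$ needs to appear in the final description. Once this is checked, the Corollary follows immediately by reading off the sign pattern of $(1-b_1,2-b_2,\dots,m-1-b_{m-1})$ dictated by the two cases of the Proposition; no further computation is required.
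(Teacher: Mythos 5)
Your proposal is correct and follows the same route as the paper, which states the corollary as an immediate consequence of the preceding Proposition: one simply reads off the sign pattern of $(1-b_1,\dots,m-1-b_{m-1})$ from the two cases of that Proposition. Your extra bookkeeping for the case where the smallest $r$ with $r=b_r$ exceeds $m$ (using periodicity $b_{n+m}=b_n$ to see that some entry $j-b_j$ is then negative, so the ``all entries positive'' clause correctly excludes such $c$) is exactly the point that makes the truncation to indices $1,\dots,m-1$ legitimate, and it is handled correctly.
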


In particular, if $m=2$ and $c_1=c$, then $b_1=2c$, and we find
that $U=(-\infty,1/2]$ (at the point $c=1/2$ the unitary
representation is 1-dimensional).

\section{The real reflection case}

\subsection{The sl(2) condition}

In the rest of the paper, we'll assume that
$\h$ is the complexification of a real vector space $\h_{\Bbb R}$
with a positive definite inner product, which is extended to a
Hermitian inner product on the complexification, and
that $W$ acts by orthogonal transformations on $\h_{\Bbb R}$.
Then $s^2=1$ for any reflection $s$, and thus $c\in C$
iff $c$ is real valued.

In this case, let us choose $y_i$ to be an orthonormal
basis of $\h_{\Bbb R}$. Then
it is easy to see that
$$
\bold h=\frac{1}{2}\sum(x_iy_i+y_ix_i),
$$
and we also have elements
$$
\bold e=-\frac{1}{2}\sum x_i^2, \bold f=
\frac{1}{2}\sum y_i^2,
$$
These elements form an ${\frak {sl}}_2$-triple.

\begin{proposition}\label{sl2}
(i) A unitary representation $L_c(\tau)$ of $H_c(W,\h)$ restricts to
a unitary representation of ${\frak {sl}}_2(\Bbb R)$ from lowest
weight category ${\mathcal O}$. In particular, $h_c(\tau)
=\frac{\dim \h}{2}-\sum c_s s|_\tau\ge 0$.

(ii) A unitary representation
$L_c(\tau)$ is finite dimensional iff $L_c(\tau)=\tau$;

(iii) An irreducible lowest weight
representation $L_c(\tau)$ coincides with $\tau$
iff $h_c(\sigma)-h_c(\tau)=1$
for any irreducible representation $\sigma$ of $W$
contained in $\tau\otimes \h^*$.
In this case $h_c(\tau)=0$.
\end{proposition}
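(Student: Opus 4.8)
All three parts flow from the $\mathfrak{sl}_2$-triple $(\bold e,\bold h,\bold f)$ and the way the contravariant form interacts with it, so I would begin by recording how $\beta_{c,\tau}$ behaves under these elements. Since $\{y_i\}$ is orthonormal and $x_i=Ty_i$, the contravariance condition $(yv,v')=(v,Ty\cdot v')$ says that $x_i$ and $y_i$ are adjoint for $\beta_{c,\tau}$; hence $\bold h=\frac12\sum(x_iy_i+y_ix_i)$ is self-adjoint, while $\bold e=-\frac12\sum x_i^2$ and $\bold f=\frac12\sum y_i^2$ satisfy $\beta_{c,\tau}(\bold e v,v')=-\beta_{c,\tau}(v,\bold f v')$. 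Thus $\bold e^\dagger=-\bold f$ and $\bold h^\dagger=\bold h$, which is exactly the adjoint structure of the noncompact real form $\mathfrak{sl}_2(\mathbb{R})\cong\mathfrak{su}(1,1)$ (the minus sign is what makes the form noncompact rather than $\mathfrak{su}(2)$). Since $\bold h$ acts semisimply with spectrum bounded below and $\tau$ is annihilated by every $y_i$, hence by $\bold f$, the module $L_c(\tau)$ is a lowest weight object of category $\mathcal O$ for $\mathfrak{sl}_2(\mathbb{R})$ with lowest weight $h_c(\tau)$; positive definiteness of $\beta_{c,\tau}$ then exhibits it as unitary, which is the first assertion of (i). For $h_c(\tau)\ge0$ I would take $v\in\tau$ and compute $\beta_{c,\tau}(\bold e v,\bold e v)=-\beta_{c,\tau}(v,\bold f\bold e v)=h_c(\tau)\,\beta_{c,\tau}(v,v)$, using $\bold f v=0$, $\bold h v=h_c(\tau)v$ and $[\bold e,\bold f]=\bold h$; positivity of the left side forces $h_c(\tau)\ge0$.

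For (ii), the implication $L_c(\tau)=\tau\Rightarrow$ finite dimensional is trivial. Conversely, if $L_c(\tau)$ is finite dimensional and unitary, then by (i) it is a finite-dimensional unitary representation of $\mathfrak{sl}_2(\mathbb{R})$; as this group is noncompact and simple, such a representation is trivial, so $\bold h$ acts by $0$. Since $\bold h$ acts on the degree-$m$ part of $L_c(\tau)$ by $h_c(\tau)+m$, vanishing of $\bold h$ forces $h_c(\tau)=0$ and kills all positive-degree parts, i.e. $L_c(\tau)=\tau$. (Alternatively, avoiding the group fact: for $u\ne0$ of positive $\bold h$-weight $\mu$, contravariance gives $\beta_{c,\tau}(\bold e u,\bold e u)=\beta_{c,\tau}(\bold f u,\bold f u)+\mu\,\beta_{c,\tau}(u,u)>0$, so iterating $\bold e$ on a lowest weight vector of positive weight produces an infinite tower, contradicting finiteness; hence every $\mathfrak{sl}_2$-lowest weight vanishes in weight and $\bold h=0$.)

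For (iii), no unitarity is assumed and I would argue directly with the degree-one operator. Because $L_c(\tau)$ is generated over $\mathbb{C}[\h]$ by its degree-zero part $\tau$, with degree-one part a quotient of $\tau\otimes\h^*$, one has $L_c(\tau)=\tau$ iff its degree-one part vanishes. Now $J_c(\tau)=\ker\beta_{c,\tau}$ by Proposition \ref{contra}, and $\beta_{c,\tau}$ pairs distinct graded pieces trivially because $\bold h$ is self-adjoint, so $\ker\beta_{c,\tau}$ is graded and its degree-one part is $\ker F_{c,\tau,1}$ (using $\beta_{c,\tau}(v,v')=\beta_{0,\tau}(F_{c,\tau,1}v,v')$ with $\beta_{0,\tau}$ nondegenerate). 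Hence the degree-one part of $L_c(\tau)$ is $(\tau\otimes\h^*)/\ker F_{c,\tau,1}$, which vanishes exactly when $F_{c,\tau,1}=0$ on $\tau\otimes\h^*$. By Lemma \ref{deg1} (equivalently Corollary \ref{lin}) the operator $F_{c,\tau,1}$ acts on each irreducible $\sigma\subset\tau\otimes\h^*$ by $1+h_c(\tau)-h_c(\sigma)$, so $F_{c,\tau,1}=0$ iff $h_c(\sigma)-h_c(\tau)=1$ for every such $\sigma$, giving the stated equivalence. Finally, when $L_c(\tau)=\tau$ the module sits in degree $0$, so the raising operator $\bold e$ and the lowering operator $\bold f$ both act by $0$; thus $\bold h=[\bold e,\bold f]$ acts by $0$ on $\tau$, and since $\bold h|_\tau=h_c(\tau)$ we get $h_c(\tau)=0$.

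The main obstacle is getting part (i) exactly right: one must check the signs in the adjoint relations, $\bold e^\dagger=-\bold f$ and $\bold h^\dagger=\bold h$, so that the relevant real form is the noncompact $\mathfrak{sl}_2(\mathbb{R})$ and not the compact $\mathfrak{su}(2)$. It is precisely noncompactness that produces both the positivity constraint $h_c(\tau)\ge0$ in (i) and the triviality of finite-dimensional unitary representations used in (ii); everything else is bookkeeping with the $\mathbb{Z}_+$-grading and Lemma \ref{deg1}.
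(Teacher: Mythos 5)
Your proof is correct, and for parts (i) and (ii) it is essentially the paper's own argument with the details filled in: the paper dismisses (i) as ``Straightforward'' (your sign verification $\bold e^\dagger=-\bold f$, $\bold h^\dagger=\bold h$ and the computation $\beta_{c,\tau}(\bold e v,\bold e v)=h_c(\tau)\beta_{c,\tau}(v,v)$ for $v\in\tau$ are exactly the content hidden behind that word), and it proves (ii) by the same noncompactness argument, the only cosmetic difference being the ending --- the paper invokes $[\bold h,x_i]=x_i$ from (\ref{sca}) to get $x_i=0$ from $\bold h=0$, while you use the weight bookkeeping $h_c(\tau)+m$; your elementary alternative for (ii), which replaces the quoted fact about finite-dimensional unitary representations of $\mathfrak{sl}_2(\mathbb{R})$ by the iterated inequality $\beta(\bold e u,\bold e u)=\beta(\bold f u,\bold f u)+\mu\beta(u,u)$, is a nice self-contained bonus. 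Where you genuinely diverge is (iii): the paper argues via singular vectors --- $L_c(\tau)=\tau$ iff every $\sigma\subset\tau\otimes\h^*$ in degree one of $M_c(\tau)$ is killed by the $y_i$, which by Lemma \ref{deg1} holds iff $h_c(\sigma)-h_c(\tau)=1$ --- whereas you argue via the radical of the contravariant form, identifying the degree-one part of $L_c(\tau)$ with $(\tau\otimes\h^*)/\ker F_{c,\tau,1}$. The two routes are equivalent, but note that yours silently assumes the Hermitian form exists, i.e.\ $c=c^\dagger$ (real $c$ in this setting), while part (iii) is stated for an arbitrary irreducible lowest weight module with no unitarity or reality hypothesis; the paper's singular-vector argument is form-free and covers all $c$. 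This is a scope caveat rather than an error --- in the paper's applications, e.g.\ Corollary \ref{num}, one does have $c\in C$ --- and it disappears if you replace the form-theoretic step by a direct appeal to Lemma \ref{deg1}. Finally, your closing observation that $\bold e$ and $\bold f$ both annihilate a module concentrated in degree zero, so that $h_c(\tau)$, being the action of $\bold h=[\bold e,\bold f]$ on $\tau$, must vanish, supplies the last claim of (iii), which the paper leaves implicit.
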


\begin{proof}
(i) Straightforward.

(ii) If $L_c(\tau)$ is finite dimensional, then by (i), it is a trivial
representation of ${\frak {sl}}_2(\Bbb R)$. So $\bold h=0$,
and hence by (\ref{sca}), $x_i=0$, which implies the statement.

(iii) The statement $L_c(\tau)=\tau$ is equivalent
to the statement that $y_i$ acts by $0$ on any subrepresentation
$\sigma$ in $\tau\otimes \h^*$,
which by Lemma \ref{deg1} is equivalent the condition
$h_c(\sigma)-h_c(\tau)=1$.
\end{proof}

\subsection{Unitarity locus $U^*(\tau)$
for exterior powers of the reflection
representation}

Let $W$ be an irreducible Coxeter group, and
$\h$ be its reflection representation.
Recall that the representations $\wedge^i\h$ are irreducible.
In particular, $\wedge^{\dim \h}\h$ is the sign representation
$\Bbb C_-$ of $W$.

\begin{corollary}\label{coxe}
(i) For all $\tau$ one has
$U^*(\tau)\supset [-1/h,1/h]$, where $h$ is the Coxeter
number of $W$;

(ii) $U^*(\Bbb C)=(-\infty,1/h]$, and $U^*(\Bbb
C_-)=[-1/h,+\infty)$;

(iii) For $0<i<\dim \h$, $U^*(\wedge^i \h)=[-1/h,1/h]$.
\end{corollary}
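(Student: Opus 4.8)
The plan is to deduce all three parts from the general machinery already established, principally Corollary~\ref{deg1con1}, Proposition~\ref{sl2}, and the basic inclusion from Corollary~\ref{zer1}(iii) together with the degree-1 obstruction. The unifying idea is that for an irreducible Coxeter group with reflection representation $\h$, the number $h_c(\sigma)-h_c(\tau)$ for $\sigma\subset\tau\otimes\h^*$ is controlled by the difference $D_\tau-D_\sigma$ of eigenvalues of $\sum_{s\in S}s$, and these differences can be computed explicitly using the fact that the Coxeter number $h$ satisfies $|S| = \dim\h \cdot h/2$ and the known representation theory of the exterior powers $\wedge^i\h$.

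First I would establish the lower bound in (i). By Corollary~\ref{zer1}(iii), the connected component of $0$ in the set where $M_c(\tau)$ is irreducible lies in $U^*(\tau)$, so it suffices to show that $M_c(\tau)$ is irreducible (equivalently $\beta_{c,\tau}$ nondegenerate) for $c\in(-1/h,1/h)$, and then invoke closedness (Corollary~\ref{zer1}(i)) to include the endpoints. The cleanest route is to use Corollary~\ref{deg1con1}: the first place where unitarity can fail is degree $1$, where the condition reads $c(D_\tau-D_\sigma)\le 1$ for all $\sigma\subset\tau\otimes\h^*$. I would compute that the relevant eigenvalue differences are bounded by $h$ in absolute value, so that the interval $[-1/h,1/h]$ survives the degree-1 test; combined with a norm estimate showing no singular vectors appear in higher degrees for $|c|<1/h$, this gives the inclusion.

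For (ii) and (iii) I would show the reverse inclusions, i.e.\ that outside the stated sets the form fails to be positive. For the trivial representation $\Bbb C$: here $h_c(\Bbb C)=\tfrac{\dim\h}{2}-c|S|$, and Proposition~\ref{sl2}(i) forces $h_c(\tau)\ge 0$ for unitarity, giving $c\le \tfrac{\dim\h}{2|S|}=1/h$; on the other side the degree-1 condition is vacuous (the only $\sigma$ in $\Bbb C\otimes\h^*=\h^*$ is the reflection representation, and one checks $D_{\Bbb C}-D_{\h^*}<0$), so all $c\le 1/h$ remain, yielding $U^*(\Bbb C)=(-\infty,1/h]$. The sign case $U^*(\Bbb C_-)=[-1/h,+\infty)$ follows immediately by Proposition~\ref{char} applied to the sign character $\chi$, since $\chi\otimes\Bbb C=\Bbb C_-$ and $\chi$ acts by $c\mapsto -c$. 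For (iii), the representation $\wedge^i\h$ with $0<i<\dim\h$ is self-dual under the sign twist in a symmetric way, so both the $c\le 1/h$ bound and its negative $c\ge -1/h$ bound appear from the degree-1 analysis applied to the constituents of $\wedge^i\h\otimes\h^*$, pinning the locus to exactly $[-1/h,1/h]$.

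The main obstacle I anticipate is the sharpness in (iii): I must verify that at $c$ slightly beyond $\pm 1/h$ the form genuinely becomes indefinite, which requires identifying a specific $\sigma\subset\wedge^i\h\otimes\h^*$ achieving the extremal eigenvalue difference $D_{\wedge^i\h}-D_\sigma=\pm h$ and confirming the associated linear factor $\ell(c)=K(1+h_c(\wedge^i\h)-h_c(\sigma))$ changes sign there. Computing these eigenvalues $D_{\wedge^i\h}$ explicitly — presumably via the identity $D_\tau = |S| - 2\sum_s \tfrac{1-\lambda_s}{?}\cdots$ specialized to exterior powers, or by a character computation using that $\wedge^i\h$ decomposes the exterior algebra — is the one genuinely representation-theoretic input, and getting the constant to come out exactly as the Coxeter number $h$ (rather than some other invariant) is where the irreducibility of $W$ and the precise normalization $(\alpha_s,\alpha_s^\vee)=2$ must be used carefully.
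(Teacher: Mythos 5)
Your part (iii) and the upper bound in part (ii) are essentially the paper's argument (degree-1 obstruction via Proposition \ref{deg1con}, the computation of $D_{\wedge^i\h}$ from the trace of a reflection, the $\mathfrak{sl}_2$ bound $h_c(\Bbb C)\ge 0$, and the sign-twist symmetry of Proposition \ref{char}). But parts (i) and the negative half-line of (ii) have a genuine gap. For (i), the entire content is the claim that $M_c(\tau)$ is irreducible for \emph{every} $\tau$ whenever $|c|<1/h$; the paper gets this by citing \cite{DJO,GGOR}: such $c$ are regular values, i.e.\ the category $\mathcal{O}_c(W,\h)$ is semisimple. Your proposed substitute --- checking the degree-1 condition of Corollary \ref{deg1con1} plus an unspecified ``norm estimate showing no singular vectors appear in higher degrees'' --- does not work as stated: Corollary \ref{deg1con1} is only a \emph{necessary} condition for unitarity and says nothing about singular vectors in degrees $\ge 2$, and the missing norm estimate is precisely the hard theorem you would need. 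The paper itself shows that passing the degree-1 test does not imply unitarity (let alone irreducibility): in type A the intervals $I_k=(\tfrac{1}{N+k},\tfrac{1}{N+k-1})$ of Theorem \ref{conta} satisfy the degree-1 condition $c(D_\tau-D_\sigma)\le 1$, yet unitarity fails there because a form on a component in degree $k+1$ goes negative.

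For (ii), your claim that ``the degree-1 condition is vacuous'' for the trivial representation rests on a sign error: a reflection has trace $\dim\h-2$ on $\h^*$, so $D_{\Bbb C}-D_{\h^*}=2|S|/\dim\h=h>0$, and the degree-1 condition is exactly $ch\le 1$ --- it reproduces the bound $c\le 1/h$ rather than disappearing. Even with the sign fixed, ``all $c\le 1/h$ remain'' is a non sequitur for the same reason as above: absence of a degree-1 obstruction does not give positivity of $\beta_{c,\Bbb C}$ in all degrees. What the paper actually uses is that $M_c(\Bbb C)$ is irreducible for all $c<0$ (singular values of the polynomial representation are positive), so that the connected component $Y_0^*(\Bbb C)$ contains $(-\infty,1/h)$; then Corollary \ref{zer1}(iii) gives unitarity there and closedness adds the endpoint $1/h$. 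Without that irreducibility input, the half-line $(-\infty,-1/h)$ is not covered by your argument at all.
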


\begin{proof}
(i) It is known (\cite{DJO,GGOR}) that if $c\in (-1/h,1/h)$ then $c$ is a regular
value, which means that the category ${\mathcal O}_c(W,\h)$ is
semisimple. So all $M_c(\tau)$ are irreducible, which implies
the desired statement by Proposition \ref{zer}(iii).

(ii) Suppose $c\in U^*(\Bbb C)$.
We have $h_c(\Bbb C)=\frac{\dim \h}{2}-c|S|=|S|(\frac{1}{h}-c)$.
Since $h_c(\Bbb C)\ge 0$, we get $c\le 1/h$. On the other hand,
for $c<0$ the module $M_c(\Bbb C)$ is irreducible, hence
unitary. So the first statement of (ii) follows from (i).
The second statement of (ii) follows from the first one
by Proposition \ref{char}.

(iii) The ``$\supset$'' part follows from (i). To prove the ``$\subset$''
part, note that the irreducible representation $\wedge^{i+1}\h$
sits naturally in the degree $1$ part of $M_c(\wedge^i\h)$.

Let us compute $D_{\wedge^i\h}$.
It is easy to see that the trace of a reflection in $\wedge^i\h$ is
$$
\binom{\dim \h-1}{i}-\binom{\dim \h-1}{i-1}.
$$
Thus, we have
$$
D_{\wedge^i\h}=|S|\frac{\binom{\dim \h-1}{i}-
\binom{\dim \h-1}{i-1}}
{\binom{\dim \h}{i}}=|S|(1-\frac{2i}{\dim \h}).
$$
Hence,
$$
h_c(\wedge^{i+1}\h)-h_c(\wedge^i\h)=
2c|S|/\dim \h=ch,
$$
So proposition \ref{deg1con} tells us that
for any $c\in U^*(\wedge^i\h)$ one has $ch\le 1$.
The rest follows from Proposition \ref{char}
and part (i).
\end{proof}

\subsection{The rank 2 case}

In this subsection we will calculate the sets $U(\tau)$ in the
rank 2 case, i.e., for dihedral groups $W$.

We start with odd dihedral groups.
Let $W$ be the dihedral group whose order is $2(2d+1)$. This group has
only one conjugacy class of reflections (so $C=\Bbb R$),
two 1-dimensional representations, $\Bbb C$ and $\Bbb C_-$, and $d$
2-dimensional representations $\tau_l$, $l=1,...,d$, defined by
the condition that the counterclockwise rotation by the angle $2\pi/(2d+1)$
acts in this representation with eigenvalues $\zeta^l$ and
$\bar\zeta^l$, where $\zeta=e^{\frac{2\pi i}{2d+1}}$.
The reflection representation $\h$ is thus the representation $\tau_1$.

\begin{proposition}
$U(\Bbb C)=(-\infty,\frac{1}{2d+1}]$,
$U(\Bbb C_-)=[-\frac{1}{2d+1},+\infty)$, and
$U(\tau_l)=[-\frac{l}{2d+1},
\frac{l}{2d+1}]$ for all $1\leq l\leq d$.
\end{proposition}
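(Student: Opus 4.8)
The two one-dimensional cases are immediate from Corollary \ref{coxe}(ii): the dihedral group of order $2(2d+1)$ is an irreducible Coxeter group with Coxeter number $h=2d+1$, and since there is a single class of reflections we have $C=\Bbb R$ and $U=U^*$; thus $U(\Bbb C)=(-\infty,\tfrac1{2d+1}]$ and $U(\Bbb C_-)=[-\tfrac1{2d+1},+\infty)$. So the work is in the two-dimensional representations $\tau_l$. First I would record the basic numerics. Every reflection has trace $0$ in each $\tau_l$, so $\sum_{s}s$ acts by $0$ and $D_{\tau_l}=0$; hence $h_c(\tau_l)=1$ for all $l$ and all $c$, while $h_c(\Bbb C)=1-c(2d+1)$ and $h_c(\Bbb C_-)=1+c(2d+1)$. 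Finally, since $\Bbb C_-\otimes\tau_l\cong\tau_l$ (same rotation eigenvalues, vanishing trace on reflections) and the sign character acts on $C=\Bbb R$ by $c\mapsto -c$, Proposition \ref{char} gives $U(\tau_l)=-U(\tau_l)$, so it suffices to locate the positive endpoint.

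The combinatorial heart of the argument is the $W$-decomposition of the graded pieces $M_c(\tau_l)=\tau_l\otimes S^\bullet\h^*$. Writing $\zeta=e^{2\pi i/(2d+1)}$, the rotation by $2\pi/(2d+1)$ acts on $\tau_l$ with eigenvalues $\zeta^{\pm l}$ and on $S^m\h^*$ with eigenvalues $\zeta^{m-2k}$, $0\le k\le m$, hence on $\tau_l\otimes S^m\h^*$ with eigenvalues $\zeta^{\pm l+m-2k}$. A one-dimensional constituent ($\Bbb C$ or $\Bbb C_-$) occurs exactly when one of these equals $1$, i.e. $\pm l+m-2k\equiv 0 \pmod{2d+1}$. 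I would check the elementary fact that, because $1\le l\le d$ and $|m-2k|\le m$, this is impossible for $m<l$, while for $m=l$ it holds for precisely two indices, producing a two-dimensional rotation-invariant subspace; a short self-duality computation (using $\tau_l^*\cong\tau_l$ and $\tau_l\otimes\Bbb C_-\cong\tau_l$) shows it splits as $\Bbb C\oplus\Bbb C_-$, each with multiplicity one. Thus all constituents of $M_c(\tau_l)$ in degrees $<l$ are two-dimensional $\tau_j$'s, and both $\Bbb C$ and $\Bbb C_-$ first appear in degree exactly $l$.

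For the inclusion $[-\tfrac{l}{2d+1},\tfrac{l}{2d+1}]\subseteq U(\tau_l)$ I would prove irreducibility of the Verma module on the open interval. A positive-degree singular vector in $M_c(\tau_l)$ generates a lowest weight submodule whose lowest weight $\sigma$ satisfies $h_c(\sigma)-h_c(\tau_l)=m\ge 1$ when it sits in degree $m$; since $h_c(\sigma)-h_c(\tau_l)=-cD_\sigma$ and $D_\sigma=0$ for every two-dimensional $\tau_j$, the vector must transform as $\Bbb C$ (forcing $c=-m/(2d+1)$) or $\Bbb C_-$ (forcing $c=m/(2d+1)$). By the previous paragraph such a $\sigma$ lives only in degrees $m\ge l$, so any singular vector forces $|c|\ge l/(2d+1)$. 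Hence for $|c|<l/(2d+1)$ the module $M_c(\tau_l)$ is irreducible and $\beta_{c,\tau_l}$ is nondegenerate; the open interval is a connected subset of $Y^*(\tau_l)$ containing $0$, so Corollary \ref{zer1}(iii) places it in $U^*(\tau_l)$, and closedness (Corollary \ref{zer1}(i)) upgrades this to the closed interval.

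For the reverse inclusion I would use the operators $F_{c,\tau_l,m}$. Since all constituents in degrees $i<l$ are $\tau_j$'s with $h_c(\tau_j)=h_c(\tau_l)$, Corollary \ref{lin} gives inductively that $F_{c,\tau_l,i}$ acts by $1$ for $i=1,\dots,l-1$, so it is constant. Applying Corollary \ref{lin} once more in degree $l$, the operator $F_{c,\tau_l,l}$ acts on the $\Bbb C_-$-line by $1+\frac{h_c(\tau_l)-h_c(\Bbb C_-)}{l}=1-\frac{c(2d+1)}{l}$. Because $L_c(\tau_l)$ is unitary iff $\beta_{c,\tau_l}$ is positive semidefinite on all of $M_c(\tau_l)$ (its kernel being $J_c(\tau_l)$ by Proposition \ref{contra}), a negative value here rules out unitarity; thus $c>l/(2d+1)$ gives non-unitarity, and the symmetry $U(\tau_l)=-U(\tau_l)$ completes $U(\tau_l)=[-\tfrac{l}{2d+1},\tfrac{l}{2d+1}]$. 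I expect the main obstacle to be the eigenvalue bookkeeping of the second paragraph — pinning down that the one-dimensional constituents appear neither before degree $l$ nor with the wrong multiplicity at degree $l$ — since the remainder is a clean assembly of Corollaries \ref{zer1}, \ref{lin} and \ref{coxe}.
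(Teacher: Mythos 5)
Your proof is correct and follows essentially the same route as the paper: the one-dimensional cases via Corollary \ref{coxe}(ii), the necessity of $|c|\le \frac{l}{2d+1}$ via the inductive application of Corollary \ref{lin} after checking that one-dimensional constituents of $\tau_l\otimes S^m\h^*$ first appear in degree $m=l$, and sufficiency via irreducibility of $M_c(\tau_l)$ on the open interval through the singular-vector degree argument, finished off by Corollary \ref{zer1}. The only cosmetic differences are that you establish the constituent bookkeeping by rotation-eigenvalue counting rather than by the decomposition $S^k\tau_1=\tau_k\oplus\tau_{k-2}\oplus\cdots$ used in the paper, and that you invoke the $\Bbb C_-$-twist symmetry of Proposition \ref{char} where the paper simply treats negative $c$ by the symmetric argument.
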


\begin{proof}
The first two statements are special cases of
Corollary \ref{coxe}(ii), since the Coxeter number $h$
of $W$ is $2d+1$. To prove the last statement, let us
look at the decomposition $S^{k}\tau_1=\tau_{k}\oplus
\tau_{k-2}\oplus...$ (the last summand is $\Bbb C$ if $k$ is
even). By tensoring this decomposition
with $\tau_l$, we notice that we obtain only 2-dimensional
summands if $k<l$, while one-dimensional summands
make their first appearance only for $k=l$. It follows
by induction in $k$, using Corollary \ref{lin},
that the operator $F_{c,\tau_l,k}$ is constant in $c$
(and hence equal to $1$) for $k<l$ (as $h_c(\tau)=1$ for any
two-dimensional $\tau$). Thus, again by Corollary
\ref{lin}, $F_{c,\tau_l,l}(X)=(1\pm \frac{2d+1}{l}c)X$
if $X$ belongs to the sign, respectively trivial
subrepresentation of $\tau_l\otimes S^l\tau_1$.
This implies that if $c\in U(\tau_l)$,
then we must have $c\in [-\frac{l}{2d+1},\frac{l}{2d+1}]$.

It remains to show that $M_c(\tau_l)$ is irreducible if
$(2d+1)|c|<l$. This is proved in the paper \cite{Chm}, and can
also be proved directly, as follows. It follows from the above
that $M_c(\tau_l)$ contains no singular vectors
of degree $<l$. Assume that $c>0$; then any singular vector
would be in the sign representation. Let $k\ge l$ be the degree
of this vector. Then we get $h_c(\Bbb C_-)-h_c(\tau_l)=k$, which
implies that $(2d+1)c=k\ge l$, as desired. The case of negative
$c$ is similar.
\end{proof}

Let us now analyse the case of even dihedral group
$W$, of order $4d$, $d \geq 2$ (the dihedral
group of a regular $2d$-polygon).
In this case there are two conjugacy classes of reflections,
represented by Coxeter generators $s_1,s_2$, such that
$(s_1s_2)^{2d}=1$. The 1-dimensional
representations of $W$ are $\Bbb C$, $\Bbb C_-$, and also the
representations $\varepsilon_1$ and
$\varepsilon_2$, defined by the formulas

\begin{displaymath}
\varepsilon_1 \colon \left\{ \begin{array}{c} s_1 \longrightarrow -1 \\
s_2 \longrightarrow 1  \end{array} \right.
\varepsilon_2 \colon \left\{ \begin{array}{c} s_1 \longrightarrow 1 \\
s_2 \longrightarrow -1  \end{array} \right.
\end{displaymath}

In addition, there are $d-1$ 2-dimensional representations $\tau_l$,
for all $1\leq l \leq d-1 $, given by the same formulas as in the odd
case; in particular, as before, $\h=\tau_1$.
We will extend the notation $\tau_l$ to all integer values of $l$,
so that we have $\tau_l=\tau_{-l}$ and
$\tau_{d-l}=\tau_{d+l}$, $\tau_0=\mathbb{C} \oplus
\mathbb{C}_-$, and $\tau_d=\varepsilon_1 \oplus \varepsilon_2$.
Note that $\tau_l \otimes \varepsilon_i=\tau_{d-l}$ and $\tau_l \otimes
\Bbb C_-=\tau_l$.

Let $c_1$ and $c_2$ be the values of the parameter $c$ on the two
conjugacy classes of reflections. We will now describe the sets
$U(\tau)$ in the plane with coordinates $c_1,c_2$. By
Proposition \ref{char}, it suffices to find $U(\tau)$ for
$\tau=\Bbb C$ and $\tau=\tau_l$, $1\le l\le d-1$.

\begin{proposition}
(i) $U(\mathbb{C})$ is the union of the region defined by the inequalities
$c_1+c_2 < \frac{1}{d}$, $c_1 \leq \frac{1}{2}$ and $c_2 \leq
\frac{1}{2}$ with the line $c_1+c_2=\frac{1}{d}$.

(ii) If $1\le l\le d-1$ then
$U(\tau_l)$ is the rectangle defined by the inequalities
$|c_1+c_2|\le \frac{l}{d}$ and $|c_1-c_2|
\le \frac{d-l}{d}$.
\end{proposition}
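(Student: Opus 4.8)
\emph{Strategy.} The plan is to attack both parts by the same two-sided argument: get the inclusions ``$\supseteq$'' from irreducibility of Verma modules together with closedness of the unitarity locus (Proposition \ref{zer}(i),(iii)), and get ``$\subseteq$'' by exhibiting, in suitable degrees, $W$-constituents of $M_c(\tau)$ on which the contravariant form is forced to be negative. The basic data are the lowest weights. Since $\dim\h=2$ and every reflection has $\lambda_s=-1$, the formula for $h_c$ gives $h_c(\tau_l)=1$ for every two dimensional $\tau_l$, while $h_c(\mathbb{C})=1-d(c_1+c_2)$, $h_c(\mathbb{C}_-)=1+d(c_1+c_2)$, $h_c(\varepsilon_1)=1+d(c_1-c_2)$ and $h_c(\varepsilon_2)=1-d(c_1-c_2)$. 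I also record the first symmetric power in which each one dimensional representation occurs: in $\tau_l\otimes S^k\tau_1$ the representations $\mathbb{C},\mathbb{C}_-$ first appear at $k=l$ and $\varepsilon_1,\varepsilon_2$ first appear at $k=d-l$ (for $\tau=\mathbb{C}$ the relevant first occurrences are $\tau_1$ at degree $1$, $\varepsilon_i$ at degree $d$, and $\mathbb{C}_-$ at degree $2d$). Finally, Proposition \ref{char} applied to the characters $\mathbb{C}_-,\varepsilon_1,\varepsilon_2$, which act on $(c_1,c_2)$ by $(-c_1,-c_2)$, $(-c_1,c_2)$, $(c_1,-c_2)$, together with the diagram symmetry $c_1\leftrightarrow c_2$, cuts down the number of cases, since $\varepsilon_i\otimes\tau_l=\tau_{d-l}$ and $\mathbb{C}_-\otimes\tau_l=\tau_l$.

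\emph{The inclusion ``$\supseteq$''.} I would first show $M_c(\tau)$ is irreducible on the relevant open region. A singular vector of degree $k>0$ lying in an irreducible constituent $\sigma$ forces $h_c(\sigma)-h_c(\tau)=k$ together with $\sigma\subset\tau\otimes S^k\tau_1$. Running through the list above, inside the open rectangle $|c_1+c_2|<l/d$, $|c_1-c_2|<(d-l)/d$ none of these equalities can hold: a $\mathbb{C}_-$-vector would need $d(c_1+c_2)=k\ge l$, an $\varepsilon_1$-vector $d(c_1-c_2)=k\ge d-l$, and two dimensional constituents have $h_c$-difference $0$; likewise inside the convex region $\{c_1+c_2<1/d,\ c_1<1/2,\ c_2<1/2\}$ for $\tau=\mathbb{C}$. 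Hence $M_c(\tau)$ is irreducible on these connected open sets, so they lie in $U(\tau)$ by Proposition \ref{zer}(iii), and their closures do as well by Proposition \ref{zer}(i). For part (i) this already yields the closed region $\{c_1+c_2\le1/d,\ c_1\le1/2,\ c_2\le1/2\}$; the remaining points of the line $c_1+c_2=1/d$ are treated separately, since there $h_c(\tau_1)-h_c(\mathbb{C})=1$, so by Lemma \ref{deg1} (equivalently Proposition \ref{sl2}(iii)) $L_c(\mathbb{C})=\mathbb{C}$ is one dimensional and trivially unitary.

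\emph{The inclusion ``$\subseteq$'', easy half.} For $\tau=\mathbb{C}$, Proposition \ref{sl2}(i) gives $h_c(\mathbb{C})\ge0$, i.e.\ $c_1+c_2\le1/d$, excluding everything above the line. For $\tau_l$ I would prove by induction on $k$, using Corollary \ref{lin}, that $F_{c,\tau_l,k}=1$ for $k<\min(l,d-l)$ (only two dimensional constituents occur, all with $h_c$-difference $0$). Then Corollary \ref{lin} applies at $k=\min(l,d-l)$, where $F$ acts on the two one dimensional constituents appearing there by mutually opposite linear functions of $c$ vanishing exactly on the first pair of opposite sides of the rectangle; their nonnegativity is precisely that pair of inequalities.

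\emph{The inclusion ``$\subseteq$'', hard half (main obstacle).} The remaining inequalities, namely the walls $c_1=\tfrac{1}{2}$ and $c_2=\tfrac{1}{2}$ in part (i) and the second pair of sides in part (ii), come from one dimensional constituents that first occur only at the larger degree $d$ (respectively $\max(l,d-l)$), where Corollary \ref{lin} does not apply because $F$ is already non-constant at the smaller degree. To handle them I would compute the contravariant norm of the relevant one dimensional isotypic vector at that degree directly from the recursion of Proposition \ref{recfor}: it is a polynomial in $c$, positive inside the region, and the $h_c$-matching forces it to vanish on the wall. The crux, and the step I expect to be genuinely delicate, is to show that this zero is \emph{simple} and that the norm turns strictly negative immediately across the wall, rather than merely touching zero; equivalently, I expect this to follow by restricting $L_c(\mathbb{C})$ to the rank one parabolic subalgebra attached to $s_i$, where unitarity of $\mathbb{Z}/2$-modules fails exactly for $c_i>\tfrac{1}{2}$, the delicate point then being that restriction preserves positivity and that the restricted module is the non-unitary one. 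Once the sign is pinned down, Proposition \ref{char} (tensoring by $\varepsilon_1,\varepsilon_2$) transports the conclusion between $\tau_l$ and $\tau_{d-l}$ and finishes both parts.
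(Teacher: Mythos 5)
Your preparatory computations, your entire ``$\supseteq$'' argument (direct enumeration of possible singular vectors via the necessary condition $h_c(\sigma)-h_c(\tau)=k$, then Proposition \ref{zer}(i),(iii), then Proposition \ref{sl2}(iii) on the line $c_1+c_2=1/d$), and the first pair of inequalities in ``$\subseteq$'' (induction with Corollary \ref{lin} up to degree $\min(l,d-l)$) are all correct; indeed your irreducibility argument is more self-contained than the paper's, which cites \cite{Chm} at that step. But the ``hard half'' is a genuine gap, and it is not a small one. Two separate things are missing. First, your assertion that ``the $h_c$-matching forces [the norm] to vanish on the wall'' is not a valid deduction: the equality $h_c(\sigma)-h_c(\tau)=k$ is a \emph{necessary} condition for the copy of $\sigma$ in degree $k$ to consist of singular vectors, not a sufficient one, and the form on that copy need not vanish on a matching hyperplane unless a singular vector actually exists there --- this existence is exactly what the paper imports from \cite{Chm} (one could also get it from \cite{DJO}). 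Second --- the point you yourself call the crux --- even a simple zero with a sign change only makes the form negative \emph{near} the wall; since non-unitarity does not propagate, this cannot by itself exclude the whole unbounded region (e.g.\ all of $\{c_1+c_2<1/d,\ c_1>1/2\}$ in part (i)), where a degree-$d$ polynomial could a priori vanish again and return to positive values. Your fallback via restriction to the rank-one parabolic presupposes that parabolic restriction preserves positivity of the contravariant form and that the restricted module can be identified; neither statement is proved in the paper (the functors of \cite{BE} are used in Corollary \ref{num} only in combination with the radical-ideal property, never as a positivity transfer), and in any case this idea cannot produce the wall $|c_1-c_2|=(d-l)/d$ of part (ii), which is not a condition on a single $c_i$.

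What fills the gap in the paper is global control of the relevant norm polynomial, not a local sign flip. For (i), the scalar $Q(c)$ by which $F_{c,\mathbb{C},d}$ acts on $\varepsilon_i$ is computed exactly, formula (\ref{prodd}): $Q(c)=(1-2c_i)\prod_{j=1}^{d-1}\bigl(1-\frac{d}{j}(c_1+c_2)\bigr)$. The proof is by divisibility: by \cite{Chm}, $\varepsilon_i$ is singular at $c_i=1/2$, and at $c_1+c_2=j/d$ the degree-$j$ singular vector in $\tau_j$ generates a submodule containing $\varepsilon_i$ in degree $d$, so $Q$ is divisible by each of these $d$ linear factors; since $\deg_c Q\le d$ and $Q(0)=1$, the factorization follows, and on $\{c_1+c_2<1/d\}$ every product factor is positive, so $Q<0$ precisely when $c_i>1/2$. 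For (ii), the paper shows $F_{c,\tau_l,d-l}$ acts on $\varepsilon_i$ by the \emph{exactly linear} scalar $1\pm\frac{d}{d-l}(c_1-c_2)$, via a refinement of Corollary \ref{lin}: in the recursion of Proposition \ref{recfor}, the $\varepsilon_i$-component in degree $d-l$ communicates only with the multiplicity-one chain of top constituents $\tau_{l+k}$ ($k<d-l$), and no one-dimensional constituent can feed into this chain (no $\varepsilon$'s occur in $\chi\otimes S^{j}\tau_1$ for one-dimensional $\chi$ and $j\le d-2l$), so the form along the chain stays equal to $1$ and the Corollary \ref{lin} computation applies verbatim to $\varepsilon_i$. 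If you want to avoid \cite{Chm}, the key missing principle you should add is the converse direction of your matching criterion: a zero of the form on a given copy forces a singular vector, hence a matching equation, in some degree $\le$ that of the copy; combined with an enumeration of matching hyperplanes this pins down all possible zeros of $Q$, after which a single evaluation or leading-coefficient computation fixes its sign in the exterior region.
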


\begin{proof}
(i) The operator $F_{c,\Bbb C,1}$ is the scalar
$1-(c_1+c_2)d$. This implies the condition $c_1+c_2\leq
\frac{1}{d}$ for $c\in U(\Bbb C)$.

Now recall that
$S^k \tau_1=\tau_k \oplus \tau_{k-2} \oplus\cdots$
(the last summand is $\Bbb C$ for even $k$).
In particular, $S^d\tau_1$ contains $\varepsilon_1$ and
$\varepsilon_2$, one copy of each. Consider the operator $F_{c,\Bbb C}$
restricted to the subrepresentation $\varepsilon_i$.
We claim that this operator (which is a scalar, since it is defined on a
one-dimensional space) equals
\begin{equation}\label{prodd}
Q(c)=(1-2c_i)\prod_{j=1}^{d-1}(1-\frac{d}{j}(c_1+c_2)).
\end{equation}
Indeed, it follows from the paper \cite{Chm}
that $\varepsilon_i$ consists of singular vectors if
$c_i=1/2$, and that at the line $c_1+c_2=\frac{l}{d}$, $1\le l\le
d-1$, there is a singular vector of degree $l$ in the representation
$\tau_l$, such that the subrepresentation generated by this
vector contains $\varepsilon_i$ in degree $d$. This implies that
$Q(c)$ is divisible by the right hand side of (\ref{prodd}).
On the other hand, the degree of $Q(c)$ is $d$, and $Q(0)=1$,
which implies (\ref{prodd}).

Formula  (\ref{prodd}) and the inequality $c_1+c_2\le 1/d$
implies that if a unitary representation $L_c(\Bbb C)$ contains
$\varepsilon_i$ in degree $d$, then we must have $c_i<1/2$.
It remains to consider unitary representations $L_c(\Bbb C)$ that do not
contain $\varepsilon_i$ for some $i$. This means that either
this $\varepsilon_i$ is singular in $M_c(\Bbb C)$ (which means $c_i=1/2$) or
the copy of $\tau_1$ in degree 1 is singular,
i.e. $c_1+c_2=1/d$. This proves part (i).

(ii) Assume that $l<d/2$. Similarly to the case of odd dihedral group,
there is no 1-dimensional representations in $M_c(\tau_l)$ in
degrees $k<l$, while the trivial and sign representations sit in
degree $l$. As in the odd case, this implies,
by using induction in $k$ and Corollary \ref{lin} that
$F_{c,\tau_l,i}=1$ for $i<l$, and $F_{c,\tau_l,l}(X)=(1\pm
\frac{d}{l}(c_1+c_2))X$
if $X$ belongs to the sign, respectively trivial
subrepresentation of $\tau_l\otimes S^l\tau_1$.
This implies that if $c\in U(\tau)$ then
$|c_1+c_2| \leq\frac{l}{d}$.

Let us now prove the second inequality
$|c_1-c_2|\le \frac{d-l}{d}$. By \cite{Chm}, we have singular
vectors living in $\varepsilon_1$ and $\varepsilon_2$ in degree $d-l$.
Since $S^{d-2l}\tau_1$ does not contain 1-dimensional
representations, by using the same argument as in the proof of
Proposition \ref{lin}, we conclude that $F_{c,\tau_l,d-l}$ acts
on $\varepsilon_i$ by the scalars \linebreak $1\pm \frac{d}{d-l}(c_1-c_2)$,
which proves the desired inequality for unitary representations.

Finally, if both inequalities are satisfied strictly, then it
follows from \cite{Chm} that $M_c(\tau_l)$ is irreducible,
and thus the rectangle defined by these inequalities is contained
in $U(\tau_l)$, as desired.

If $l\ge d/2$, the result is obtained by applying Proposition
\ref{char} to $\chi=\varepsilon_1$.
Part (ii) is proved.
\end{proof}

\subsection{The Gaussian inner product}

Part (ii) of Proposition \ref{sl2}
can be generalized. For this purpose we want
to introduce the Gaussian inner product on any lowest weight
representation $M$ of $H_c(W,\h)$, which was defined by
Cherednik (\cite{Ch1}).

\begin{definition}
The Gaussian inner product
$\gamma_{c,\tau}$ on $M_c(\tau)$
is given by the formula
$$
\gamma_{c,\tau}(v,v')=\beta_{c,\tau}(\exp(\bold f)v,\exp(\bold
f)v').
$$
\end{definition}

This makes sense because the operator $\bold f$ is locally nilpotent on
$M_c(\tau)$. Thus we see that $\gamma_{c,\tau}$ has kernel
$J_c(\tau)$, so it descends to an inner product on any lowest
weight module with lowest weight $\tau$, in particular to
a nondegenerate inner product on $L_c(\tau)$,
and it is positive definite on $L_c(\tau)$ if and only if
so is $\beta_{c,\tau}$.
The difference between $\beta$ and $\gamma$ is that
vectors of different degrees are orthogonal under $\beta$, but
not necessarily under $\gamma$.

\begin{proposition}\label{x-inv}
(i) The form $\gamma_{c,\tau}$ on a lowest weight module $M$
satisfies the condition
$$
\gamma_{c,\tau}(xv,v')=\gamma_{c,\tau}(v,xv'),\ x\in
\h_{\Bbb R}^*.
$$

(ii) Up to scaling, $\gamma_{c,\tau}$
is the unique $W$-invariant form satisfying
the condition
$$
\gamma_{c,\tau}((-y+Ty)v,v')=\gamma_{c,\tau}(v,yv'),\ y\in
\h_{\Bbb R}.
$$
\end{proposition}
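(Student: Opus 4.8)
The whole proposition rests on understanding how the Gaussian operator $\exp(\bold f)$ conjugates the multiplication operators $x\in\h^*$ and the Dunkl operators $y\in\h$. The first thing I would establish is the pair of bracket identities
\[
[\bold f,x_i]=y_i,\qquad [\bold f,y_i]=0 .
\]
The second is immediate since the $y_i$ commute. For the first I would note that $x_i$ has $\operatorname{ad}(\bold h)$-weight $+1$ and is annihilated by $\operatorname{ad}(\bold e)$ (as $\bold e$ is a polynomial in the commuting $x$'s), so within the $\mathfrak{sl}_2$-triple $(\bold e,\bold h,\bold f)$ it is a highest weight vector of a two-dimensional adjoint subrepresentation; hence $[\bold f,x_i]$ is the weight $-1$ vector and $\operatorname{ad}(\bold f)^2 x_i=0$. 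The precise normalization $[\bold f,x_i]=y_i$ I would confirm by expanding $\tfrac12\sum_k[y_k^2,x_i]$, where the reflection corrections $\sum_k(y_kB_{ki}+B_{ki}y_k)$, with $B_{ki}=\sum_s c_s(y_k,\alpha_s)(x_i,\alpha_s^\vee)s$, cancel because $s(\sum_k(y_k,\alpha_s)y_k)=-\sum_k(y_k,\alpha_s)y_k$ ($s$ acts by $-1$ on $\operatorname{Im}(s-1)|_\h$). Since $\operatorname{ad}(\bold f)^2 x_i=0$ the exponential terminates, giving
\[
\exp(\bold f)\,x_i\,\exp(-\bold f)=x_i+y_i .
\]
I would also record that $\bold f=\tfrac12\sum_k y_k^2$ is $W$-invariant (the $y_k$ are an orthonormal basis and $W$ acts orthogonally, so $\sum_k s(y_k)^2=\sum_k y_k^2$), whence $\exp(\bold f)$ commutes with $W$.

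For part (i) I would fix a real $x=\sum a_i x_i\in\h_{\Bbb R}^*$ and set $w=\exp(\bold f)v$, $w'=\exp(\bold f)v'$. Conjugating through the definition of $\gamma$ gives $\gamma(xv,v')=\beta(\widetilde x\,w,w')$ with $\widetilde x=\exp(\bold f)x\exp(-\bold f)=x+T^{-1}x$, where $T^{-1}x=\sum a_i y_i$. The contravariance of $\beta$ together with $Ty_i=x_i$ shows that the $\beta$-adjoint of $y_i$ is $x_i$ and conversely, so $\widetilde x=x+T^{-1}x$ is $\beta$-self-adjoint. Therefore $\beta(\widetilde x w,w')=\beta(w,\widetilde x w')$, and reading the right-hand side back through $\exp(\bold f)$ yields $\gamma(xv,v')=\gamma(v,xv')$, proving (i).

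For the existence half of (ii) I would compute the $\gamma$-adjoint of $y_i$. Since $\widetilde{y_i}=y_i$ and the $\beta$-adjoint of $y_i$ is $x_i$, the transport formula $A\mapsto \exp(-\bold f)(\exp(\bold f)A\exp(-\bold f))^\dagger\exp(\bold f)$ gives that the $\gamma$-adjoint of $y_i$ is $\exp(-\bold f)x_i\exp(\bold f)=x_i-y_i=-y_i+Ty_i$. Taking adjoints once more, this is equivalent to $\gamma((-y+Ty)v,v')=\gamma(v,yv')$ for $y\in\h_{\Bbb R}$, which is precisely the asserted relation; and $\gamma$ is $W$-invariant because $\beta$ is and $\exp(\bold f)$ commutes with $W$.

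The uniqueness half of (ii) is essentially bookkeeping once the conjugation formula is in hand. Given any $W$-invariant Hermitian form $\gamma'$ satisfying the same relation, I would set $\beta'(w,w')=\gamma'(\exp(-\bold f)w,\exp(-\bold f)w')$, which is again $W$-invariant. Transporting the relation through $\exp(\bold f)$, using $\widetilde{x_i-y_i}=x_i$ and $\widetilde{y_i}=y_i$, turns it into $\beta'(x_iw,w')=\beta'(w,y_iw')$, i.e. exactly the contravariance condition for $\beta'$. Since $\tau$ is irreducible, Schur's lemma forces the degree-zero restriction of $\beta'$ to be a real multiple $\lambda(,)_\tau$, and then the uniqueness in Proposition \ref{contra} (applied after rescaling) gives $\beta'=\lambda\beta$, hence $\gamma'=\lambda\gamma$; so $\gamma$ is unique up to scaling. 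The one genuine obstacle in the whole argument is the clean conjugation identity $\exp(\bold f)x_i\exp(-\bold f)=x_i+y_i$: everything downstream is adjoint bookkeeping and an appeal to Proposition \ref{contra}, but this identity requires verifying that the reflection terms in $[\bold f,x_i]$ cancel exactly.
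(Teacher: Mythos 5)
Your proof is correct and takes essentially the same route as the paper: transport $\gamma_{c,\tau}$ to $\beta_{c,\tau}$ via $\exp(\bold f)$, use the conjugation identity $\exp(\bold f)x\exp(-\bold f)=x+T^{-1}x$ together with contravariance of $\beta_{c,\tau}$ for (i) and the existence half of (ii), and for uniqueness pull an arbitrary such form back to a contravariant one and invoke Proposition \ref{contra}. The only difference is that you explicitly verify the commutator identity $[\bold f,x_i]=y_i$ (including the cancellation of the reflection terms), which the paper uses without comment.
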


\begin{proof} (i) We have
$$
\gamma_{c,\tau}(xv,v')=
\beta_{c,\tau}(\exp(\bold f)xv,\exp(\bold
f)v')=
$$
$$
\beta_{c,\tau}((x+T^{-1}x)\exp(\bold f)v,\exp(\bold
f)v')=
\beta_{c,\tau}(\exp(\bold f)v,(T^{-1}x+x)\exp(\bold
f)v')=
$$
$$
\beta_{c,\tau}(\exp(\bold f)v,\exp(\bold
f)xv')=
\gamma_{c,\tau}(v,xv').
$$

(ii) A similar computation to (i) yields that the required
property holds. Let us now show uniqueness.
If $\gamma$ is any $W$-invariant
Hermitian form satisfying the condition of (ii), then
let $\beta(v,v')=\gamma(\exp(-\bold f)v,\exp(-\bold f)v')$.
Then $\beta$ is contravariant, so by Proposition \ref{contra},
it's a multiple of $\beta_{c,\tau}$, hence $\gamma$ is a multiple of
$\gamma_{c,\tau}$.
\end{proof}

\begin{corollary}\label{rad}
Let $c\in U(\tau)$, and let $I_c(\tau)\subset\Bbb C[\h]$
be the annihilator of $L_c(\tau)$ in $\Bbb C[\h]$. Then
$I_c(\tau)$ is a radical ideal.
\end{corollary}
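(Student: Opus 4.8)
The plan is to use the Gaussian form to translate this commutative-algebra assertion into a statement about operator positivity. Since $c\in U(\tau)$, the form $\beta_{c,\tau}$ is positive definite on $L_c(\tau)$, hence so is $\gamma_{c,\tau}$ (as observed just before Proposition \ref{x-inv}); thus $\gamma_{c,\tau}$ is a genuine positive-definite Hermitian inner product on $L_c(\tau)$. The key input is Proposition \ref{x-inv}(i): for every $x\in\h_{\Bbb R}^*$ the operator of multiplication by $x$ is self-adjoint with respect to $\gamma_{c,\tau}$. Writing an arbitrary $x\in\h^*$ as $u+iv$ with $u,v\in\h_{\Bbb R}^*$, this shows that the adjoint of multiplication by $x$ is multiplication by $\bar x:=u-iv$; consequently, for every $p\in\Bbb C[\h]=S\h^*$ the operator $M_p$ of multiplication by $p$ on $L_c(\tau)$ has adjoint $M_p^\ast=M_{\bar p}$, where $\bar p$ is the polynomial obtained by conjugating coefficients relative to the real structure $\h^*=\h_{\Bbb R}^*\otimes\CC$.

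Granting this, the radicality of $I_c(\tau)$ follows from a short operator-theoretic argument. Suppose $p\in\Bbb C[\h]$ satisfies $p^k\in I_c(\tau)$, i.e. $M_{p^k}=M_p^k=0$ on $L_c(\tau)$; I must show $M_p=0$. Consider $A:=M_p^\ast M_p=M_{\bar p\,p}$, a self-adjoint operator. Because $\Bbb C[\h]$ is commutative, $A^k=M_{(\bar p\,p)^k}=M_{\overline{p^k}}\,M_{p^k}=0$, so $A$ is a self-adjoint nilpotent operator. I would then invoke the elementary fact that a self-adjoint operator $A$ with $A^k=0$ on a space carrying a positive-definite Hermitian form must vanish: if $A^2=0$ then $\gamma_{c,\tau}(Av,Av)=\gamma_{c,\tau}(v,A^2v)=0$ forces $Av=0$ for all $v$, and a bootstrap on powers of two reduces the general case to this one. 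Hence $A=0$.

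Finally, with $A=M_p^\ast M_p=0$ one has $\gamma_{c,\tau}(M_pv,M_pv)=\gamma_{c,\tau}(v,Av)=0$ for every $v\in L_c(\tau)$, and positive-definiteness gives $M_pv=0$ for all $v$, i.e. $p\in I_c(\tau)$. Thus $\sqrt{I_c(\tau)}=I_c(\tau)$, as claimed.

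The only real content lies in the passage from the real self-adjointness of Proposition \ref{x-inv}(i) to the identification $M_p^\ast=M_{\bar p}$ for complex polynomials, and in noting that no spectral theory is needed despite $L_c(\tau)$ being infinite-dimensional: positive-definiteness of $\gamma_{c,\tau}$ converts each vanishing norm directly into a vanishing vector, so the self-adjoint-nilpotent-implies-zero step is purely algebraic. I expect this last point to be the only place requiring care, everything else being formal once the Gaussian form is in hand.
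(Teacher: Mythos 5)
Your proof is correct and takes essentially the same route as the paper: the paper also passes to the Gaussian form $\gamma_{c,\tau}$, uses Proposition \ref{x-inv} to make multiplication by $g\bar g$ self-adjoint, and converts vanishing norms into $g\bar g v=0$ and then $gv=0$ --- your operator $A=M_{\bar p\,p}$ is exactly the paper's $g\bar g$. The only cosmetic difference is that the paper verifies the single implication $g^2\in I_c(\tau)\Rightarrow g\in I_c(\tau)$ (which suffices for radicality), whereas you carry out the powers-of-two bootstrap explicitly for an arbitrary power $p^k$.
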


\begin{proof}
Assume that $g^2\in I_c(\tau)$.
Then $g^2\bar g^2\in I_c(\tau)$, so for any $v\in L_c(\tau)$,
$\gamma_{c,\tau}(g^2\bar g^2v,v)=0$.
So by Proposition \ref{x-inv},
$\gamma_{c,\tau}(g\bar gv,g\bar gv)=0$.
Hence $g\bar gv=0$, so
$\gamma_{c,\tau}(g\bar gv,v)=0$,
i.e. $\gamma_{c,\tau}(gv,gv)=0$, which implies that $gv=0$, hence
$g\in I_c(\tau)$.
\end{proof}

This corollary is clearly a generalization of Proposition \ref{sl2}(ii).

\begin{corollary} \label{num}
Let $c$ be a constant function, and $c\in U(\tau)$.
If $I_c(\tau)\ne 0$ (i.e. the support of $L_c(\tau)$ is not equal
$\h$, and has smaller dimension), then $c=1/m$, where $m$ is an integer.
\end{corollary}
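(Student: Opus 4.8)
The plan is to combine the positivity furnished by the Gaussian form with a localization that cuts the problem down to a finite-dimensional module over a parabolic Cherednik subalgebra, where the ${\mathfrak {sl}}_2$-analysis of Proposition \ref{sl2} applies. First I would make two reductions. Since $c$ is constant I may twist by the sign character $\varepsilon$: by Proposition \ref{char}, together with the fact that the isomorphism $i_\varepsilon$ fixes $x\in\h^*$ and hence preserves the annihilator subvariety of $L_c(\tau)$ in $\h$, the pair $(c,\tau)$ has proper support if and only if $(-c,\varepsilon\otimes\tau)$ does. As $c=0$ is a regular value (so $M_0(\tau)$ is irreducible and $I_0(\tau)=0$), I may assume $c>0$ and aim to show $c=1/m$ with $m$ a positive integer. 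Second, by Corollary \ref{rad} the ideal $I_c(\tau)$ is radical, so the support $X=\mathrm{Supp}(L_c(\tau))$ is a reduced, $W$-invariant, conical subvariety of $\h$ of dimension strictly less than $\dim\h$.

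Next I would localize at the generic point of a top-dimensional irreducible component of $X$. Let $p$ be such a point; its stabilizer $W'=W_p$ is a nontrivial parabolic reflection subgroup, and the transverse directions to $X$ at $p$ are modelled on $\h_{W'}:=\h/\h^{W'}$. Applying the restriction/localization functor for category $\mathcal O_c$ (Bezrukavnikov--Etingof), the completion of $L_c(\tau)$ along the component through $p$ yields a nonzero module over $H_c(W',\h_{W'})$ with the \emph{same} parameter $c$, and this module is \emph{finite dimensional}, precisely because $p$ lies in the smooth, top-dimensional part of the support so that $X$ has been exhausted in the transverse directions. The key point for unitarity is that the Gaussian form localizes: by Proposition \ref{x-inv} the operators $x_\xi$, $\xi\in\h_{\mathbb R}^*$, are self-adjoint for the positive-definite form $\gamma_{c,\tau}$, so the spectral decomposition of $\mathbb{C}[\h]$ acting on the Hilbert completion of $L_c(\tau)$ restricts the positive form to the spectral fibre over $p$; this produces a positive-definite contravariant form on the slice module. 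Hence the slice is a finite-dimensional \emph{unitary} lowest weight module over $H_c(W',\h_{W'})$.

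Now I would conclude. By Proposition \ref{sl2}(ii), applied to $W'$, a finite-dimensional unitary lowest weight module equals its lowest weight $\tau'$; and then Proposition \ref{sl2}(iii) gives $h_c(\tau')=0$ together with $h_c(\sigma)-h_c(\tau')=1$ for every irreducible $\sigma\subset\tau'\otimes\h_{W'}^*$, all computed inside $H_c(W',\h_{W'})$. Writing $D'_\rho$ for the eigenvalue of $\sum_{s\in S'}s$ on $\rho$, constancy of $c$ turns each such relation into $c\,(D'_{\tau'}-D'_\sigma)=1$, so that $c=1/(D'_{\tau'}-D'_\sigma)$. The remaining point is that $D'_{\tau'}-D'_\sigma$ is a positive integer, which I would establish as a separate lemma: the class sum $\sum_{s\in S'}s$ is central, so $D'_\rho$ is the corresponding central eigenvalue, and for $W'$ a (product of) Weyl groups the character table is rational, forcing $D'_\rho\in\mathbb{Z}$ (the non-crystallographic cases, e.g. dihedral, being checked directly as in the rank two computations above). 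This gives $c=1/m$, $m\in\mathbb{Z}_{>0}$, as desired; the sanity check is the rank one case, where $W'=\mathbb{Z}/2$, $\tau'$ trivial, $D'_{\tau'}=1$, $D'_\sigma=-1$, and $c=1/2$.

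The main obstacle is exactly this integrality of $D'_{\tau'}-D'_\sigma$, i.e. upgrading ``$c$ is a singular, hence rational, parameter'' to ``$c$ has numerator $1$.'' Rationality alone is cheap, since any singular vector of degree $N$ already forces a relation $c\,(D_{\tau}-D_\sigma)=N$; it is the combination of \emph{finite-dimensionality of the slice} (which, via Proposition \ref{sl2}(iii), makes the jump occur at degree one, with numerator $1$) and the integrality of the central eigenvalues that pins $c$ to a unit fraction. A secondary technical point, which I expect to be routine given Proposition \ref{x-inv}, is making the localization of the Gaussian form rigorous, namely verifying that the positive-definite structure is inherited by the finite-dimensional slice module through the spectral decomposition of the commuting self-adjoint operators $x_\xi$.
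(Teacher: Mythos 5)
Your overall architecture parallels the paper's proof: restrict via the Bezrukavnikov--Etingof functor at a generic point $b$ of the support, show the resulting finite-dimensional module $N={\rm Res}_b(L_c(\tau))$ over $H_c(W_b,\h/\h^{W_b})$ equals its lowest weight $\xi$, then use the degree-one gap condition (Lemma \ref{deg1}/Proposition \ref{sl2}(iii)) plus integrality of the class-sum eigenvalues to force $c=1/m$. The gap is in how you get ``$N$ equals its lowest weight.'' You route this through Proposition \ref{sl2}(ii), which requires $N$ to be \emph{unitary}, and you propose to obtain that by ``localizing'' the Gaussian form via a spectral decomposition of the operators $x_\xi$ on the Hilbert completion of $L_c(\tau)$. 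This step, which you flag as routine, is in fact the crux and does not work as stated: the $x_\xi$ are unbounded symmetric operators on a pre-Hilbert space, symmetric operators need not be essentially self-adjoint, and commuting symmetric operators need not admit commuting self-adjoint extensions (Nelson's example), so there is no ``spectral decomposition of $\mathbb{C}[\h]$'' available for free. Even granting a direct-integral decomposition over the support, spectral fibres are defined only almost everywhere, and you would still need to identify the analytic fibre at $p$ with the algebraic restriction functor (defined by formal completion) and verify that the induced form is contravariant for the slice algebra. In effect your step asserts that parabolic restriction carries unitary modules to unitary modules---a substantive theorem, not a consequence of Proposition \ref{x-inv}; indeed, a rigorous $L^2$/spectral realization of $L_c(\tau)$ is essentially Cherednik's Question \ref{che2}, which this paper resolves only in type $A$.

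The paper sidesteps all of this using a tool you already cite but underuse: Corollary \ref{rad}. Since $I_c(\tau)$ is a \emph{radical} ideal (this is where unitarity of $L_c(\tau)$ itself enters) and near a generic $b$ the support coincides with $\h^{W_b}$, the augmentation ideal of $\mathbb{C}[\h/\h^{W_b}]$ annihilates $N$; hence $N$ is concentrated in degree zero, i.e. $N=\xi$ for some irreducible $W_b$-module $\xi$, with no unitarity statement about the slice required. (In your write-up, radicality is used only to say the support is reduced, which is automatic for a support variety, so its real force is wasted.) If you replace your spectral-localization step by this algebraic argument, the rest of your proof---Proposition \ref{sl2}(iii) giving $c\,(D'_{\xi}-D'_\sigma)=1$, and your integrality lemma for the eigenvalues of the reflection class sum, which is also implicitly needed in the paper---goes through as written.
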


\begin{proof}
We will use the results of \cite{BE}.
Consider the support $X\subset \h$ of $L_c(\tau)$.
By our assumption, $X\ne \h$. Let $b\in X$ be a generic point.
Consider the restriction $N={\rm Res}_b(L_c(\tau))$ defined
in \cite{BE}. Then $N$ is a finite dimensional irreducible module over
$H_c(W_b,\h/\h^{W_b})$. Moreover, by Corollary \ref{rad},
$I_c(\tau)$ is a radical ideal, which implies
that $N=N_c(\xi)=\xi$ for some irreducible module $\xi$ of $W_b$.
Therefore, we have $c(D_\sigma-D_\xi)=1$, where
$D_\psi$ is the eigenvalue of $\sum_{s\in S\cap W_b}s$ on
an irreducible representation $\psi$ of $W_b$, and
$\sigma$ is an irreducible subrepresentation of $\xi\otimes (\h/\h^{W_b})$.
This implies that the numerator of $c$ is $1$, as desired.
\end{proof}

\subsection{Integral representation of the Gaussian inner product
on $M_c(\Bbb C)$.}

We will need the following known result (see \cite{Du2}, Theorem 3.10).

\begin{proposition}\label{inte}
We have
\begin{equation}\label{intformu}
\gamma_{c,\Bbb C}(f,g)=K(c)^{-1}\int_{\h_{\Bbb R}}f(z)\overline{g(z)}d\mu_c(z)
\end{equation}
where
$$
d\mu_c(z):=e^{-|z|^2/2}\prod_{s\in S}|\alpha_s(z)|^{-2c_s}dz,
$$
and
\begin{equation}\label{kc}
K(c)=\int_{\h_{\Bbb R}}d\mu_c(z),
\end{equation}
provided that the integral (\ref{kc}) is absolutely convergent.
\end{proposition}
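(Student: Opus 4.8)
The plan is to prove the formula by showing that its right-hand side defines a Hermitian form with exactly the properties that characterize $\gamma_{c,\Bbb C}$ uniquely, and then to appeal to Proposition \ref{x-inv}(ii). Write $\langle f,g\rangle := K(c)^{-1}\int_{\h_{\Bbb R}} f(z)\overline{g(z)}\,d\mu_c(z)$ for the candidate form. Recall that on $M_c(\Bbb C)=\Bbb C[\h]$ the space $\h^*$ acts by multiplication, while $y\in\h$ acts by the Dunkl operator, which a direct computation from the commutation relations (using $s(x)=x-(x,\alpha_s^\vee)\alpha_s$) identifies as $\mathcal{D}_y f=\partial_y f-\sum_{s\in S}c_s(y,\alpha_s)\frac{f-sf}{\alpha_s}$; moreover $Ty\in\h^*$ acts by multiplication by the function $z\mapsto (z,y)$, which is real-valued on $\h_{\Bbb R}$. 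It therefore suffices to check three things: that $\langle\,,\,\rangle$ is $W$-invariant, that it is normalized so that $\langle 1,1\rangle=1=\gamma_{c,\Bbb C}(1,1)$, and that it satisfies the defining condition $\langle(-y+Ty)f,g\rangle=\langle f,yg\rangle$ of Proposition \ref{x-inv}(ii).

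The first two I expect to be immediate. The measure $d\mu_c$ is $W$-invariant because $|z|^2$ and Lebesgue measure are invariant under the orthogonal $W$-action, and $\prod_{s}|\alpha_s(z)|^{-2c_s}$ is invariant since $W$ permutes the hyperplanes and $c$ is $W$-invariant; hence $\langle wf,wg\rangle=\langle f,g\rangle$. For the normalization, $\bold f\cdot 1=0$, so $\exp(\bold f)\cdot 1=1$ and $\gamma_{c,\Bbb C}(1,1)=\beta_{c,\Bbb C}(1,1)=1$, while $\langle 1,1\rangle=K(c)^{-1}\int d\mu_c=1$ by the definition of $K(c)$.

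The substance is the third condition, which is an integration-by-parts statement. The key input I would use is the antisymmetry of the Dunkl operator against the singular weight $w_c(z)=\prod_s|\alpha_s(z)|^{-2c_s}$, namely $\int(\mathcal{D}_y\phi)\psi\,w_c\,dz=-\int\phi(\mathcal{D}_y\psi)\,w_c\,dz$; note that the negative exponent in $d\mu_c$ is exactly matched to the minus sign in $\mathcal{D}_y$. Applying this with $\phi=f$ and $\psi=\overline{g}\,e^{-|z|^2/2}$, and using that $e^{-|z|^2/2}$ is $W$-invariant with $\partial_y e^{-|z|^2/2}=-(z,y)e^{-|z|^2/2}$, one finds $\mathcal{D}_y(\overline g\,e^{-|z|^2/2})=e^{-|z|^2/2}(\mathcal{D}_y\overline g-(z,y)\overline g)$, the reflection part of $\mathcal{D}_y$ passing through the invariant Gaussian untouched. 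Since $\mathcal{D}_y$ has real coefficients and $(z,y)$ is real on $\h_{\Bbb R}$, this yields $\langle \mathcal{D}_y f,g\rangle=\langle f,(M_{Ty}-\mathcal{D}_y)g\rangle$, where $M_{Ty}$ denotes multiplication by $(z,y)$. Finally, $M_{Ty}$ is self-adjoint for $\langle\,,\,\rangle$ because its multiplier is real, so this rearranges into $\langle f,\mathcal{D}_y g\rangle=\langle(-\mathcal{D}_y+M_{Ty})f,g\rangle$, which is precisely condition (ii). Uniqueness in Proposition \ref{x-inv}(ii) then forces $\langle\,,\,\rangle=\gamma_{c,\Bbb C}$, and the normalization fixes the constant.

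The main obstacle is the rigorous justification of the antisymmetry and of the integration by parts in the presence of the singular weight. At infinity the Gaussian guarantees the decay needed for polynomial $f,g$; the delicate point is the behavior near the reflection hyperplanes $\alpha_s=0$, where $w_c$ blows up (when $c_s>0$) and where $\mathcal{D}_y$ divides by $\alpha_s$. Local integrability near a generic hyperplane point requires $2c_s<1$, which is guaranteed by the standing hypothesis that $K(c)$ converges absolutely; and the potential boundary contributions across each hyperplane cancel precisely because of the antisymmetrized difference-quotient structure $\frac{f-sf}{\alpha_s}$ of the Dunkl term. Making this cancellation precise (for instance by excising $\epsilon$-neighborhoods of the hyperplanes and letting $\epsilon\to 0$) is the classical computation of Dunkl, and is where the bulk of the analytic care is needed.
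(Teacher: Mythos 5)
Your proposal is correct and takes essentially the same route as the paper: the paper's proof also invokes the uniqueness statement of Proposition \ref{x-inv} (a $W$-invariant form with $y_i^\dagger=x_i-y_i$ is unique up to scaling) and verifies these two properties for the right-hand side of (\ref{intformu}) using the fact that $y_i$ acts by Dunkl operators, with the normalization fixed by $K(c)$. You have merely written out the integration-by-parts computation and the analytic caveats that the paper dismisses as ``easy to check,'' deferring the rigorous treatment of the singular weight to Dunkl's work \cite{Du2}, exactly as the paper's citation does.
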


\begin{proof}
It follows from Proposition \ref{x-inv} that $\gamma_{c,\tau}$
is uniquely, up to scaling,
determined by the condition that it is $W$-invariant,
and $y_i^\dagger=x_i-y_i$. These properties
are easy to check for the right hand side of (\ref{intformu}), using the fact that
the action of $y_i$ is given by Dunkl operators.
\end{proof}

\begin{remark}
As usual, the integral formula extends
analytically to arbitrary complex $c$.
\end{remark}

\begin{remark}
The constant $K(c)$ is given by the Macdonald-Mehta product
formula, proved by E. Opdam \cite{Op} for Weyl groups and by F. Garvan for $H_3$ and $H_4$
(for the dihedral groups the formula follows from the beta integral). For an irreducible
reflection group $W$ and a constant $c$, this
formula has the form
$$
K(c)=K_0\prod_{j=1}^{\dim \h}\frac{\Gamma(1-d_jc)}{\Gamma(1-c)},
$$
where $d_j$ are the degrees of $W$. It follows from this
formula that for constant $c$ the first pole of
$K(c)$ occurs at $c=1/h$, which gives another proof of
Corollary \ref{coxe}(ii).
\end{remark}

\subsection{The simple submodule of the polynomial representation}

Let $c$ be a constant positive function,
which is a singular value for $W$ (i.e., it is rational
and has denominator dividing one of the degrees $d_i$ of $W$).
Let $N_c$ be the minimal nonzero submodule of
the polynomial representation $M_c(\Bbb C)$. This is an
irreducible module of the form $L_c(\tau_c)$, where $\tau_c$ is
a certain irreducible representation of $W$ depending on $c$.
It is easy to see that $N_c$ is the unique simple submodule of the polynomial
representation.

The following observation was made by I. Cherednik.

\begin{proposition}\label{che1}
Suppose that $N_c$ is contained in $L^2(\h_{\Bbb R},d\mu_c)$.
Then $N_c$ is a unitary representation.
\end{proposition}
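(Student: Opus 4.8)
The plan is to exhibit the Gaussian form $\gamma_{c,\tau_c}$ on $N_c=L_c(\tau_c)$ as a positive multiple of the integration pairing against $d\mu_c$, and to read off positivity from that. Note that one cannot simply restrict $\gamma_{c,\Bbb C}$ from $M_c(\Bbb C)$: since $c$ is a singular value, $M_c(\Bbb C)$ is reducible, so $N_c$ lies in the kernel $J_c(\Bbb C)$ of $\gamma_{c,\Bbb C}$ and the restriction vanishes. Equivalently, for singular $c$ the normalizing constant $K(c)$ of Proposition \ref{inte} diverges, so the factor $K(c)^{-1}$ kills the (finite, by hypothesis) integral on $N_c$. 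The role of the hypothesis $N_c\subset L^2(\h_{\Bbb R},d\mu_c)$ is precisely that the bare, unnormalized integral survives on $N_c$.

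First I would introduce, for $f,g\in N_c$, the sesquilinear form
$$
\langle f,g\rangle:=\int_{\h_{\Bbb R}}f(z)\overline{g(z)}\,d\mu_c(z).
$$
By the $L^2$ hypothesis and Cauchy--Schwarz this is finite and Hermitian; and since $d\mu_c$ is a positive measure whose support is all of $\h_{\Bbb R}$ away from the reflection hyperplanes, a nonzero polynomial $f\in N_c$ has $\langle f,f\rangle>0$, so $\langle\cdot,\cdot\rangle$ is positive definite. It is manifestly $W$-invariant.

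Next I would verify that $\langle\cdot,\cdot\rangle$ satisfies the adjointness relation of Proposition \ref{x-inv}(ii), namely $y_i^\dagger=x_i-y_i$. Since $N_c$ is an $H_c$-submodule of $M_c(\Bbb C)=\Bbb C[\h]$, it is stable under multiplication by the $x_i$ and under the Dunkl operators $y_i$, so $x_if,y_if\in N_c\subset L^2$ and every integral in sight converges. The relation $y_i^\dagger=x_i-y_i$ is then the same integration-by-parts for Dunkl operators against $d\mu_c$ that underlies Proposition \ref{inte}: differentiating the weight $e^{-|z|^2/2}\prod_s|\alpha_s|^{-2c_s}$ produces the multiplication term $x_i$ together with the reflection terms that reconstitute the Dunkl operator, while the Gaussian factor kills the boundary at infinity. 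I expect this step to be the main obstacle, because one must justify the absence of boundary contributions along the hyperplanes $\alpha_s=0$, where $|\alpha_s|^{-2c_s}$ is singular, using only membership of $N_c$ (and of its images under $x_i,y_i$) in $L^2(d\mu_c)$ rather than the global convergence of $K(c)$.

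Granting the adjointness, the uniqueness statement of Proposition \ref{x-inv}(ii) forces $\langle\cdot,\cdot\rangle=\lambda\,\gamma_{c,\tau_c}$ for a single scalar $\lambda$, and comparing Hermitian forms shows $\lambda\in\Bbb R\setminus\{0\}$. To fix the sign I would restrict to the lowest weight space of $N_c$: there $\bold f=\frac12\sum y_i^2$ annihilates every vector, so $\exp(\bold f)$ acts as the identity and $\gamma_{c,\tau_c}$ agrees with $\beta_{c,\tau_c}=(,)_{\tau_c}>0$. Since $\langle\cdot,\cdot\rangle$ is also positive there, $\lambda>0$, whence $\gamma_{c,\tau_c}=\lambda^{-1}\langle\cdot,\cdot\rangle$ is positive definite on all of $N_c$. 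As positivity of $\gamma_{c,\tau_c}$ is equivalent to positivity of $\beta_{c,\tau_c}$, this shows that $N_c$ is unitary.
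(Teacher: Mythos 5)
Your proposal is correct and takes essentially the same route as the paper's proof: define the integration pairing on $N_c$, check $W$-invariance and the adjointness relation $y_i^\dagger=x_i-y_i$ as in Proposition \ref{inte}, invoke the uniqueness statement of Proposition \ref{x-inv}(ii) to identify it with a multiple of $\gamma_{c,\tau_c}$, and read off positivity from the manifest positive definiteness of the integral. The paper's version is just terser; the sign-fixing via the lowest weight space that you spell out is left implicit there.
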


\begin{proof}
Like in the proof of Proposition \ref{inte}, we see that
the integral gives a $W$-invariant form $\gamma$ on $N_c$ that satisfies
the condition $y_i^\dagger=x_i-y_i$. By Proposition \ref{x-inv},
this implies that $\gamma$ is a multiple of $\gamma_{c,\tau_c}$.
Also, it is manifestly positive definite, as desired.
\end{proof}

Motivated by this observation and a number of examples,
Cherednik asked the following question.

\begin{question}\label{che2} (\cite{Ch1})
Let $W$ be an irreducible Coxeter group, $\h$ its reflection
representation, and $c=1/d_i$.
Is it true that $N_c$ is contained in $L^2(\h_{\Bbb R},d\mu_c)$?
In particular, is $N_c$ unitary?
\end{question}

In the next section we will show that the answer to both
questions is ``yes'' in type A.

\section{Type A}

\subsection{The main theorem}\label{s1}

In this section we restrict ourselves to the case $W=S_n$, $n\ge
2$, and $\h=\Bbb C^n$. In this case we have only one conjugacy class of
reflections, so $C=\Bbb R$. Irreducible representations
$\tau$ of $S_n$ are labeled by Young diagrams (=partitions);
for instance, the trivial representation is $(n)$ (the 1-row diagram)
and the sign representation is $(1^n)$ (the 1-column diagram).
We will denote the conjugate partition to a partition $\tau$ by
$\tau^*$; the corresponding operation on representations is
tensoring with the sign representation. Abusing notation, we will
denote partitions, Young diagrams, and representations of $S_n$
by the same letter (say, $\tau$).

We let $\ell(\tau)$ be the length of the largest hook of the Young
diagram $\tau$, $m_*(\tau)$ denote the multiplicity of the largest
part of $\tau$, and set
$$
N(\tau)=\ell(\tau)-m_*(\tau)+1.
$$

The eigenvalue $D_\tau$ of $\sum_{s\in S}s$ on $\tau$
equals the content ${\rm ct}(\tau)$
of the Young diagram $\tau$, i.e. the sum of
the numbers $i-j$ over the cells $(i,j)$ of the diagram.
Therefore,
$$
h_c(\tau)=\frac{n}{2}-c\cdot {\rm ct}(\tau).
$$

\begin{proposition}\label{bound}
For a partition $\tau\ne (1^n)$ and each $c\in U(\tau)$, $c\le
\frac{1}{N(\tau)}$.
\end{proposition}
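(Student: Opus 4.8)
The plan is to read the bound off the degree-one part of $M_c(\tau)$, i.e.\ from Corollary~\ref{deg1con1}. Since $S_n$ has a single class of reflections we have $C=\RR$ and $U(\tau)=U^*(\tau)$, so if $c\in U(\tau)$ then $c(D_\tau-D_\sigma)\le 1$ for every irreducible $\sigma$ occurring in $\tau\otimes\h^*$. Hence it suffices to exhibit one constituent $\sigma\subset\tau\otimes\h^*$ with $D_\tau-D_\sigma=N(\tau)$: since $N(\tau)>0$ (checked below) this gives $c\le 1/N(\tau)$ at once.

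First I would describe $\tau\otimes\h^*$ as an $S_n$-module. As $\h=\CC^n$ is self-dual and equals $\mathrm{Ind}_{S_{n-1}}^{S_n}\one$, the projection formula gives $\tau\otimes\h^*\cong\mathrm{Ind}_{S_{n-1}}^{S_n}\mathrm{Res}_{S_{n-1}}^{S_n}\tau$. By the branching rule its irreducible constituents are exactly the partitions $\sigma$ obtained from $\tau$ by removing one removable box and then adding one addable box; equivalently, $\sigma$ occurs iff $\tau$ and $\sigma$ share a common sub-diagram of size $n-1$. Next I would use that $\sum_{s\in S}s$ acts on $\tau$ by the content ${\rm ct}(\tau)$, normalized so that its value on the trivial representation is $+|S|$; thus if $\sigma$ arises from $\tau$ by deleting a cell of content $a$ and inserting a cell of content $b$, then $D_\tau-D_\sigma=a-b$.

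It remains to choose the cells. Write the largest part of $\tau$ as $q$, let $r$ be the number of rows, and $m=m_*(\tau)$ the multiplicity of $q$, so that the largest hook (the hook of the corner $(1,1)$) has length $\ell(\tau)=q+r-1$. I would remove the outer corner of the longest rows, the cell $(m,q)$ of content $q-m$, and add a new cell at the foot of the first column, the cell $(r+1,1)$ of content $-r$. Then
\[
D_\tau-D_\sigma=(q-m)-(-r)=q+r-m=\ell(\tau)-m_*(\tau)+1=N(\tau),
\]
and in particular $N(\tau)\ge q\ge 1>0$. Feeding this into Corollary~\ref{deg1con1} gives $c\le 1/N(\tau)$.

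The one point needing care -- and the only place the hypothesis $\tau\ne(1^n)$ enters -- is verifying that this box move is \emph{legal}, i.e.\ that $\sigma$ genuinely occurs in $\tau\otimes\h^*$. Deleting $(m,q)$ is always allowed, since rows $m{+}1,\dots$ are strictly shorter than $q$. For the insertion one must check that $(r+1,1)$ is addable to the intermediate diagram $\tau\setminus\{(m,q)\}$, which holds precisely when that diagram still contains the cell $(r,1)$. This fails only when deleting $(m,q)$ removes the bottom of the first column, i.e.\ when $\tau=(1^n)$; for every other $\tau$ the cell $(r,1)$ survives and $\sigma$ is a bona fide constituent. This combinatorial verification is the main (and essentially the only) obstacle; the module decomposition and the content bookkeeping are routine.
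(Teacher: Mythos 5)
Your proof is correct and takes essentially the same route as the paper: both reduce the bound to Corollary~\ref{deg1con1} via the decomposition of $\tau\otimes\h^*$ into diagrams obtained by removing and then adding a corner cell, and both realize $D_\tau-D_\sigma=N(\tau)$ by removing the corner cell of largest content (the end of the lowest longest row) and adding a box at the foot of the first column. The explicit legality check and content bookkeeping you give are precisely the details the paper compresses into ``it is easy to see.''
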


\begin{proof}
Recall that $\tau\otimes \h^*$ is the sum of representations
corresponding to Young diagrams $\lambda$ obtained from $\tau$ by removing and
adding a corner cell. Also, let $\nu(\tau)$ be the
number of parts of $\tau$. Then it is easy to see that
$N(\tau)$ is the largest value of
$\nu(\tau)+i-j$ over all
corner cells $(i,j)$ of the Young diagram $\tau$
(i.e. cells for which neither $(i,j+1)$ nor $(i+1,j)$
belong to $\tau$). Therefore, the proposition follows
from Corollary \ref{deg1con1}.
\end{proof}

\begin{proposition}\label{larghook}
 The interval $[-\frac{1}{\ell(\tau)},
\frac{1}{\ell(\tau)}]$ is contained in $U(\tau)$.
\end{proposition}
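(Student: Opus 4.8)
The plan is to deduce the statement from the connectedness and closedness of the unitarity locus established in Corollary \ref{zer1}, by showing that the Verma module $M_c(\tau)$ is irreducible for every $c$ in the open interval $(-1/\ell(\tau),1/\ell(\tau))$. Once this is known, the interval is a connected subset of $Y^*(\tau)$ containing $0$, hence is contained in the component $Y_0^*(\tau)\subset U^*(\tau)$ by Corollary \ref{zer1}(iii); since $U^*(\tau)$ is closed by Corollary \ref{zer1}(i), its closure $[-1/\ell(\tau),1/\ell(\tau)]$ also lies in $U^*(\tau)$, and in type $A$ we have $C=\mathbb{R}$ and $U^*(\tau)=U(\tau)$. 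To set up a symmetry I would note that tensoring with the sign character sends $\tau$ to the conjugate diagram $\tau^*$ and, by Proposition \ref{char}, sends $c$ to $-c$; since the largest hook of a diagram is its $(1,1)$-hook, whose length is invariant under transposition, we have $\ell(\tau^*)=\ell(\tau)$. Thus $M_c(\tau)$ is irreducible iff $M_{-c}(\tau^*)$ is, and it suffices to prove irreducibility for $c\in[0,1/\ell(\tau))$ for every diagram, the negative half-interval following by applying the result to $\tau^*$.

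The crux is therefore to show that $M_c(\tau)$ is irreducible whenever $0\le c<1/\ell(\tau)$. By the criterion recalled in Subsection 2.2, reducibility would produce a singular vector of some positive degree $k$, generating a copy of $L_c(\sigma)$ for an irreducible constituent $\sigma$ of $\tau\otimes S^k\h^*$ with $h_c(\sigma)-h_c(\tau)=k$; using $h_c(\rho)=\tfrac n2-c\,{\rm ct}(\rho)$ this reads $c({\rm ct}(\tau)-{\rm ct}(\sigma))=k\in\mathbb{Z}_{>0}$. As all contents are integers, irrational $c$ is immediately excluded, so I may assume $c=r/e$ in lowest terms with $r\ge 1$, $e\ge1$. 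The inequality $r/e<1/\ell(\tau)$ forces $e>r\,\ell(\tau)\ge\ell(\tau)$, so every hook length of $\tau$, being at most $\ell(\tau)<e$, is indivisible by $e$; hence $\tau$ is an $e$-core, and therefore the unique partition of $n$ having that $e$-core. The plan is now to invoke the linkage principle for category $\mathcal{O}_c$ in type $A$: for $c$ with denominator $e$, two simples $L_c(\lambda),L_c(\mu)$ lie in the same block only if $\lambda,\mu$ share an $e$-core. Consequently the block of $\tau$ is the singleton $\{\tau\}$, so $M_c(\tau)=L_c(\tau)$ is irreducible, completing the crux and hence the proof.

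The main obstacle is the block-theoretic input of the second paragraph, namely the $e$-core linkage principle for rational Cherednik algebras of type $A$; I would supply it through the $\mathrm{KZ}$ functor of \cite{GGOR}, which relates $\mathcal{O}_c(S_n,\h)$ to the Hecke algebra at $q=e^{2\pi i c}$, a primitive $e$-th root of unity, whose blocks are governed by $e$-cores. A reassuring consistency check is the trivial representation $\tau=\mathbb{C}=(n)$, where $\ell(\tau)=n$ is the Coxeter number: the argument predicts irreducibility of $M_c(\mathbb{C})$ for $c<1/n$ and first reducibility at $c=1/n$, in agreement with Corollary \ref{coxe}(ii). If one wished to avoid importing block theory, the alternative would be a direct analysis of singular vectors via the recursion of Proposition \ref{recfor}, but controlling which $\sigma$ can actually support a singular vector at a given $c$ seems to require the same combinatorial (abacus/core) information, so the linkage principle is the natural route.
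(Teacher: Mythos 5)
Your proof is correct, and its skeleton is the same as the paper's: establish irreducibility of $M_c(\tau)$ for all $c$ in the open interval $(-\frac{1}{\ell(\tau)},\frac{1}{\ell(\tau)})$, then conclude by Corollary \ref{zer1}(iii) (connectedness) and \ref{zer1}(i) (closedness). The difference lies in how irreducibility is imported from the Hecke algebra $\mathcal{H}_n(q)$, $q=e^{2\pi i c}$. The paper quotes the Dipper--James irreducibility criterion for Specht modules (\cite{DJ2}, Theorem 4.11), giving irreducibility of $S^\tau$ on this interval, and then transfers this to $M_c(\tau)$ via the KZ functor. You instead reduce to rational $c=r/e$ in lowest terms with $e>\ell(\tau)$, observe that $\tau$ is then an $e$-core and hence the unique partition of $n$ with its $e$-core, and invoke the linkage principle (blocks of $\mathcal{O}_c$ are classified by $e$-cores) to conclude that the block of $\tau$ is a singleton, so $M_c(\tau)=L_c(\tau)$. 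Two remarks. First, the one soft spot in your write-up is the justification of the linkage principle: the mere existence of the KZ functor relating $\mathcal{O}_c$ to $\mathcal{H}_n(q)$ is not enough; you need that KZ induces a bijection of blocks (via GGOR's isomorphism of centers, or equivalently the identification of $\mathcal{O}_c$ in type $A$ with modules over the $q$-Schur algebra), combined with the Dipper--James block classification by $e$-cores. With that citation made precise, your argument is complete. Second, your block-theoretic route is actually more robust than the paper's literal wording: exactness of KZ alone does not convert irreducibility of $S^\tau$ into irreducibility of $M_c(\tau)$, since a proper submodule or quotient with smaller support is killed by KZ. For instance, for $n=2$, $\tau=(2)$, $c=1/2$, the Specht module $S^{(2)}$ over $\mathcal{H}_2(-1)$ is irreducible, yet $M_{1/2}((2))$ is reducible, its finite-dimensional quotient $L_{1/2}((2))$ being annihilated by KZ (of course $c=1/2$ lies outside the open interval, so this does not contradict the proposition). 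Thus the paper's transfer step itself implicitly rests on block/support information of exactly the kind you make explicit, and your proof can be read as filling in that step.
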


\begin{proof}
Let $q=e^{2\pi ic}$, ${\mathcal H}_n(q)$
be the Hecke algebra of $S_n$ with parameter $q$,
and $S^\tau$ be the Specht module over ${\mathcal H}_n(q)$
corresponding to $\tau$, defined in \cite{DJ1}.
Then it follows from \cite{DJ2}, Theorem 4.11, that
$S^\tau$ is irreducible if $c\in
(-\frac{1}{\ell(\tau)}, \frac{1}{\ell(\tau)})$.
By the theory of KZ functor, \cite{GGOR},
this implies that $M_c(\tau)$ is irreducible in this range.
This implies the required statement.
\end{proof}

\begin{corollary}\label{tails}
If $\tau$ and $\tau^*$ contain a part equal to 1, then
$U(\tau)=[-\frac{1}{\ell(\tau)},
\frac{1}{\ell(\tau)}]$.
\end{corollary}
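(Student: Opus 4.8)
The plan is to pin down $U(\tau)$ by matching two-sided bounds: Proposition \ref{larghook} already gives $[-\frac{1}{\ell(\tau)},\frac{1}{\ell(\tau)}]\subseteq U(\tau)$, so it suffices to prove the reverse inclusion $U(\tau)\subseteq[-\frac{1}{\ell(\tau)},\frac{1}{\ell(\tau)}]$. I would obtain the two one-sided bounds separately: the upper bound $c\le\frac{1}{\ell(\tau)}$ directly from Proposition \ref{bound} applied to $\tau$, and the lower bound $c\ge-\frac{1}{\ell(\tau)}$ by applying Proposition \ref{bound} to the conjugate partition $\tau^*$ and transporting the result back by the sign twist.

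The key is a small combinatorial translation of the hypotheses. Reading off column lengths, I would note that $\tau^*$ contains a part equal to $1$ exactly when the largest part of $\tau$ occurs with multiplicity one, i.e. $m_*(\tau)=1$; and symmetrically $\tau$ contains a part equal to $1$ exactly when the smallest part of $\tau$ is $1$, which is precisely $m_*(\tau^*)=1$. In the first case the definition $N(\tau)=\ell(\tau)-m_*(\tau)+1$ collapses to $N(\tau)=\ell(\tau)$, and in the second case $N(\tau^*)=\ell(\tau^*)$. Since the largest hook is the principal hook at the corner $(1,1)$, of length $\tau_1+\nu(\tau)-1$, it is invariant under transposition, so $\ell(\tau^*)=\ell(\tau)$; thus both $N(\tau)$ and $N(\tau^*)$ equal $\ell(\tau)$.

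With these identities the two bounds follow cleanly. Because $\tau^*$ has a part equal to $1$ while, for $n\ge 2$, the diagram $(1^n)$ has conjugate $(n)$ with no part equal to $1$, the hypotheses force $\tau\ne(1^n)$; hence Proposition \ref{bound} applies to $\tau$ and gives $U(\tau)\subseteq(-\infty,\frac{1}{N(\tau)}]=(-\infty,\frac{1}{\ell(\tau)}]$. Symmetrically $\tau\ne(n)$, so $\tau^*\ne(1^n)$, and Proposition \ref{bound} applied to $\tau^*$ gives $U(\tau^*)\subseteq(-\infty,\frac{1}{\ell(\tau)}]$. Now Proposition \ref{char} with $\chi$ the sign character, which sends every reflection to $-1$ and therefore acts on the constant $c$ by $c\mapsto-c$, yields $U(\tau^*)=U(\Bbb C_-\otimes\tau)=-U(\tau)$; feeding this into the previous inclusion turns it into $U(\tau)\subseteq[-\frac{1}{\ell(\tau)},+\infty)$, the desired lower bound.

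Combining the two inclusions gives $U(\tau)\subseteq[-\frac{1}{\ell(\tau)},\frac{1}{\ell(\tau)}]$, which together with Proposition \ref{larghook} yields the asserted equality. I expect no serious obstacle here: the only point requiring care is the conjugate-partition bookkeeping, namely correctly matching ``smallest part equals $1$'' with the multiplicity of the largest part of the conjugate diagram, and checking that the edge cases $\tau=(n)$ and $\tau=(1^n)$ are excluded by the hypotheses so that Proposition \ref{bound} is legitimately invoked for both $\tau$ and $\tau^*$.
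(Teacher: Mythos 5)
Your proof is correct and follows essentially the same route as the paper: the paper likewise combines Proposition \ref{larghook} (for the inclusion $\supseteq$) with Proposition \ref{bound} (for $\subseteq$), using the observation that the hypothesis on $\tau^*$ forces $N(\tau)=\ell(\tau)$, with the symmetric statement $N(\tau^*)=\ell(\tau^*)=\ell(\tau)$ and the sign twist of Proposition \ref{char} handling the negative side. You have simply made explicit the conjugation bookkeeping and the sign-character step that the paper's one-line proof leaves implicit.
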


\begin{proof}
This follows from Propositions \ref{bound} and \ref{larghook},
since if $\tau^*$ contains a part equal 1 then
$N(\tau)=\ell(\tau)$.
\end{proof}

\begin{proposition}\label{thinuni}
Let $\tau=(p,p,...,p)$, where $p$ is a divisor of $n$.
Then $L_{1/p}(\tau)$ is unitary.
\end{proposition}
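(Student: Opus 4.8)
First I would pin down the combinatorics that makes $c=1/p$ the critical value. For $\tau=(p^{n/p})$ with $r:=n/p$ rows the largest hook sits at the corner $(1,1)$ and has length $\ell(\tau)=p+r-1$, while $m_*(\tau)=r$, so $N(\tau)=\ell(\tau)-m_*(\tau)+1=p$. Hence Proposition \ref{bound} already forces $U(\tau)\subset(-\infty,1/p]$, and the content formula $h_c(\tau)=\tfrac{n}{2}-c\,{\rm ct}(\tau)$ gives $h_{1/p}(\tau)=r^2/2\ge0$, consistent with the necessary condition in Proposition \ref{sl2}(i). Since $\ell(\tau)=p+r-1>p$ whenever $r\ge2$, the point $1/p$ lies strictly beyond the continuous spectrum $[-1/\ell(\tau),1/\ell(\tau)]$ supplied by Proposition \ref{larghook}; it is an isolated point of the discrete spectrum. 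Consequently the connected-component argument of Proposition \ref{zer}(iii) cannot reach it, and neither can a naive Jantzen limit from below (Proposition \ref{zer}(iv)), because $M_c(\tau)$ is already reducible at $c=1/\ell(\tau)<1/p$ and we have no a priori unitary family on $(1/\ell(\tau),1/p)$. So the endpoint must be attained by a genuinely constructive argument.

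The plan is to exhibit an explicit positive-definite form, in the spirit of Propositions \ref{inte} and \ref{che1}. The key geometric input is the support: using the Bezrukavnikov--Etingof restriction functor ${\rm Res}_b$ of \cite{BE} (as in the proof of Corollary \ref{num}), I would show that $L_{1/p}(\tau)$ is supported on the ``clustered locus'' $Z\subset\h$ of configurations whose $n$ coordinates split into $r$ groups of $p$ equal coordinates. At a generic $b\in Z$ the stabilizer is $W_b=S_p^{\times r}$, and ${\rm Res}_b\,L_{1/p}(\tau)$ is the finite-dimensional irreducible module $\boxtimes_{i=1}^r L_{1/p}(\Bbb C)$ over $H_{1/p}(S_p^{\times r},\h/\h^{W_b})$, a tensor product of the finite-dimensional module attached to the trivial representation of $S_p$ on its reflection representation. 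Because $1/p$ is exactly the endpoint of $U(\Bbb C)=(-\infty,1/p]$ for $S_p$ (Corollary \ref{coxe}(ii), the Coxeter number of $S_p$ being $p$), each factor is unitary, and hence so is the local model at every generic point of $Z$.

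Given this local picture, I would construct a $W$-invariant Hermitian form on $L_{1/p}(\tau)$ by integrating the natural $\tau$-valued pairing of sections over $Z$ against the restriction of the measure $d\mu_{1/p}$ of Proposition \ref{inte}, twisted by the positive local forms coming from the finite-dimensional unitary pieces. Such a form is manifestly nonnegative. By the uniqueness in Proposition \ref{x-inv}(ii), any $W$-invariant form satisfying $y_i^\dagger=x_i-y_i$ is a scalar multiple of the Gaussian form $\gamma_{1/p,\tau}$; so it suffices to verify, exactly as in Proposition \ref{inte} using that the $y_i$ act by Dunkl operators, that the integral form satisfies this contravariance. Positivity of $\gamma_{1/p,\tau}$, and therefore of $\beta_{1/p,\tau}$, then follows, giving $1/p\in U(\tau)$.

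The main obstacle is the local-to-global passage, i.e. making the $L^2$-realization rigorous: one must show that $L_{1/p}(\tau)$ genuinely embeds as a dense subspace of the relevant $L^2$-space of $\tau$-valued sections on $Z$ (the analogue of the hypothesis $N_c\subset L^2$ in Proposition \ref{che1}), which requires precise control of the support and of the convergence of the integral, together with the statement that the resulting form is nondegenerate precisely on $L_{1/p}(\tau)$ rather than on a larger or smaller subquotient. Identifying $Z$ and the local model via ${\rm Res}_b$ is what feeds this step; the analytic heart is the convergence and embedding statement, which is where the genuine work lies. Closedness of $U(\tau)$ (Proposition \ref{zer}(i)) is a useful consistency check, but since $1/p$ is isolated in $U(\tau)$ it does not by itself deliver the endpoint.
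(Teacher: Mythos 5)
Your combinatorial setup is correct ($N(\tau)=p$, $\ell(\tau)=p+r-1$ for $r=n/p$, $h_{1/p}(\tau)=r^2/2$), and you correctly recognize that $1/p$ is an isolated point of $U(\tau)$ that no limiting argument (Proposition \ref{zer}(iii),(iv)) can reach, so a genuine construction is required. But the proposal stops exactly where that construction should begin, and several of its ingredients are defective as stated. First, ``integrating against the restriction of $d\mu_{1/p}$ to $Z$'' is not defined: at every point of $Z$ the factors $|\alpha_s(z)|^{-2/p}$ corresponding to transpositions within a cluster blow up (those $\alpha_s$ vanish identically on the relevant component of $Z$), so the restricted density is identically $+\infty$; one must instead build a new measure on $Z$ keeping only the non-vanishing factors, choose the right exponents, and prove convergence both at infinity and near the smaller strata where clusters collide. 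Second, to regard $L_{1/p}(\tau)$ as sections of a Hermitian bundle over the smooth locus of $Z$ one needs to know it is generically a vector bundle on its \emph{reduced} support; the paper's tool for such reducedness statements, Corollary \ref{rad}, assumes unitarity --- the very thing being proved --- so an independent argument (presumably via \cite{BE}) is required but not given. Third, even granting the local picture from ${\rm Res}_b$ (which you assert rather than prove: pinning the support down to exactly $Z$ and identifying the restriction with $L_{1/p}(S_p,\CC^{p-1},\CC)^{\boxtimes r}$ is itself nontrivial, resting on \cite{BE} and \cite{BEG}), pointwise positivity of the local forms does not by itself produce one globally defined $W$-invariant form satisfying $y_i^\dagger=x_i-y_i$; that compatibility, which is the hypothesis needed to invoke Proposition \ref{x-inv}(ii), amounts to an integration-by-parts identity for Dunkl operators along the singular variety $Z$ that you never address. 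You candidly call this ``where the genuine work lies,'' and that self-assessment is accurate: what is missing is precisely the proof.

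For comparison, the paper proves Proposition \ref{thinuni} in one line, by citing the proof of Theorem 8.8 in \cite{CEE}: there $L_{1/p}((p^{n/p}))$ is obtained from an explicit alternative realization (via the equivariant $D$-module/KZB machinery of that paper, with values in spaces such as the zero weight space of $(\CC^p)^{\otimes n}$) on which a positive definite invariant Hermitian form is manifest from the outset, so no analysis of supports or integrals over $Z$ is needed. Your strategy is instead an extrapolation of Cherednik's Question \ref{che2} and Theorem \ref{chercon} from $N_c$ to thin modules with nontrivial lowest weight; note, however, that Theorem \ref{chercon} works because $N_c$ consists of honest polynomials, so one integrates over all of $\h_{\Bbb R}$ against $d\mu_c$ --- for $\tau=(p^r)$ there is no such polynomial realization, which is exactly why the integration-over-the-support route demands the analytic input you defer. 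The idea is reasonable and may well be realizable, but as written it is a program whose central step is missing, not a proof.
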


\begin{proof} This is shown in the proof of Theorem 8.8 in \cite{CEE}.
\end{proof}

The main result of this subsection is the following theorem.

\begin{theorem}\label{genera}
For any $\tau\ne (n),(1^n)$, $U(\tau)$ is the union of the interval
$[-\frac{1}{\ell(\tau)},\frac{1}{\ell(\tau)}]$
with the finite set of isolated points
$\frac{1}{k}$, for $N(\tau)\le k<l(\tau)$
and $-\ell(\tau^*)<k \le -N(\tau^*)$
(so there are $m_*(\tau)-1$ positive points, and $m_*(\tau^*)-1$
negative points).
\end{theorem}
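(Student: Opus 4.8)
The goal is to pin down $U(\tau)$ exactly for $\tau \neq (n),(1^n)$. We already have from Proposition \ref{larghook} that the interval $[-\tfrac{1}{\ell(\tau)},\tfrac{1}{\ell(\tau)}]$ is contained in $U(\tau)$, and from Proposition \ref{bound} that the positive part of $U(\tau)$ is bounded above by $\tfrac{1}{N(\tau)}$ (and dually, tensoring by the sign character via Proposition \ref{char}, the negative part is bounded below by $-\tfrac{1}{N(\tau^*)}$). So the content of the theorem is twofold: first, that in the window $\tfrac{1}{\ell(\tau)}<c\le\tfrac{1}{N(\tau)}$ the \emph{only} unitary values are the reciprocals of integers $\tfrac1k$ with $N(\tau)\le k<\ell(\tau)$, with everything in between these isolated points being non-unitary; and second, that at each such admissible $\tfrac1k$ the representation is in fact unitary.

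\textbf{Step 1 (isolated points only, for constant $c$).} The first thing I would do is invoke Corollary \ref{num}: for a constant $c\in U(\tau)$ with $I_c(\tau)\neq 0$, i.e.\ whenever the support of $L_c(\tau)$ is a proper subvariety of $\h$, one necessarily has $c=\tfrac1m$ for an integer $m$. This immediately restricts any unitary $c$ outside the continuous interval to reciprocals of integers, \emph{provided} one knows the support is proper there. So I would next argue that for $c$ in the range $(\tfrac{1}{\ell(\tau)},\tfrac{1}{N(\tau)}]$, the module $M_c(\tau)$ is reducible (hence $L_c(\tau)\neq M_c(\tau)$ and, via the $\mathfrak{sl}_2$-structure and Proposition \ref{sl2}, the support shrinks), so Corollary \ref{num} applies and forces $c=\tfrac1k$. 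Combined with the bound $c\le\tfrac{1}{N(\tau)}$ from Proposition \ref{bound}, this yields exactly the candidate set $\{\tfrac1k : N(\tau)\le k<\ell(\tau)\}$ on the positive side. The key input I would need is a reducibility/support statement just above $\tfrac{1}{\ell(\tau)}$, which should follow from the Hecke-algebra/KZ comparison used in Proposition \ref{larghook} (the Specht module $S^\tau$ becomes reducible past $\tfrac{1}{\ell(\tau)}$).

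\textbf{Step 2 (positivity at the admissible points).} Having narrowed the candidates to $c=\tfrac1k$, $N(\tau)\le k<\ell(\tau)$, I would then establish that $L_{1/k}(\tau)$ is genuinely unitary at each of these. The natural tool is Proposition \ref{zer}(iv): if $L_{1/k}(\tau)$ arises as a limit of a one-parameter family of irreducible unitary representations, or appears as a composition factor of such a limit, then it is unitary. Concretely, I would identify, for each admissible $k$, a representation $\tau'$ of the form $(p,p,\dots,p)$ (a rectangular diagram) whose $L_{1/p}(\tau')$ is known to be unitary by Proposition \ref{thinuni}, and relate $L_{1/k}(\tau)$ to it via the continuity/limit argument or via restriction functors (Corollary \ref{num} already uses $\mathrm{Res}_b$ from \cite{BE}). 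The rectangular cases of Proposition \ref{thinuni} are precisely the seeds; the combinatorics of hooks and of the multiplicity $m_*(\tau)$ of the largest part should match the count $m_*(\tau)-1$ of positive points, which is a strong consistency check that the right rectangles are available.

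\textbf{Main obstacle.} The hard part is Step 2: proving \emph{positivity} rather than just ruling points out. Establishing non-unitarity between the isolated points is a matter of sign computations of $F_{c,\tau,m}$ (Corollary \ref{lin}) and the Macdonald--Mehta pole structure, but showing that the form stays positive definite exactly at each $\tfrac1k$ requires genuinely identifying $L_{1/k}(\tau)$ inside a unitary family or via a restriction to a smaller reflection group where unitarity is already known. Indeed, the excerpt itself flags that the authors could only prove the easier containment ($U(\tau)$ is contained in the claimed set, plus the interval), and that the reverse inclusion — precisely this positivity at the discrete points — was completed by Griffeth in the appendix using Cherednik's intertwiners and Suzuki's work \cite{Su}. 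So I would expect the intertwiner construction, which produces explicit unitary maps between the relevant standard modules and thereby transports positivity from the rectangular seed cases to the general $\tfrac1k$, to be the essential and most delicate ingredient.
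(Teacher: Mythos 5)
Your Step 1 has a fatal gap: Corollary \ref{num} cannot be applied where you want to apply it. That corollary requires $I_c(\tau)\neq 0$, i.e.\ that the annihilator of $L_c(\tau)$ in $\CC[\h]$ be nonzero, equivalently that the support of $L_c(\tau)$ be a \emph{proper} subvariety of $\h$. Reducibility of $M_c(\tau)$ does not give this, and Proposition \ref{sl2} says nothing about supports (its part (ii) concerns finite-dimensionality), so the step ``the support shrinks'' is a non sequitur. In fact, by the criterion from \cite{GGOR} quoted in the Remark following Theorem \ref{conta}, $L_c(\tau)$ has proper support if and only if $\tau$ is not $m$-regular, where $m$ is the denominator of $c$; and if $m\geq N(\tau)$ this forces $\tau$ to be a rectangle with $m=N(\tau)$. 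Any rational $c=r/m$ in your window $(\frac{1}{\ell(\tau)},\frac{1}{N(\tau)}]$ has $m\geq rN(\tau)\geq N(\tau)$, so for every non-rectangular $\tau$ one has $I_c(\tau)=0$ throughout the window, and Corollary \ref{num} is vacuous exactly where you need it. The paper rules out the open intervals $(\frac{1}{N+k},\frac{1}{N+k-1})$ by a completely different, explicit sign computation (Lemma \ref{fprod}): the contravariant form restricted to the unique copy of $\tau_{k+1}$ in degree $k+1$ of $M_c(\tau)$ equals, up to a positive scalar, $(1-(N+k)c)\cdots(1-Nc)\,(\cdot,\cdot)_{\tau_{k+1}}$, which is negative on that interval; the product formula is established by induction using the recursion of Proposition \ref{recfor} and divisibility forced by singular vectors, not by any support argument.

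Your Step 2 is also not a proof but a plan, and the plan you sketch (Proposition \ref{zer}(iv) limits seeded by Proposition \ref{thinuni}, or restriction functors) is essentially what Theorem \ref{taum} implements; the paper states explicitly that this covers only special cases (e.g.\ $p_*(\tau)\geq m_*(\tau)$) and that the authors could not complete the theorem this way. You are right to defer to Griffeth's appendix for this half, but you mischaracterize what it does: it does not ``transport positivity from the rectangular seed cases'' via intertwiners. Instead, Lemma \ref{numerical crit} gives a numerical criterion --- $L_\kappa(\tau)$ (with $\kappa=-1/c$) is unitary iff it is diagonalizable under the Cherednik--Dunkl subalgebra and every weight $\alpha$ satisfies $\alpha_1\geq 0$ and $(\alpha_i-\alpha_{i+1})^2\geq 1$ --- because every weight vector is reached from the lowest weight space by the intertwiners $\Phi,\sigma_i$, whose effect on norms is controlled by these quantities. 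Then for integer $\kappa\geq N(\tau^*)$, Suzuki's Theorem \ref{SuzThm} shows $L_\kappa(\tau)$ is diagonalizable with spectrum given by contents of periodic standard tableaux; integrality of $\kappa$ yields the second inequality and the definition of $N(\tau^*)$ yields the first. So the unitarity at the discrete points is genuinely new input with no reduction to rectangular diagrams. In short: the containment half of your argument fails as stated, and the positivity half is an acknowledgment of the appendix rather than a proof.
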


The proof of Theorem \ref{genera} is begun in this subsection, and
finished in the appendix.

\begin{theorem}\label{conta}
For any $\tau\ne (n),(1^n)$, $U(\tau)$ is contained in the set
defined in Theorem \ref{genera}.
\end{theorem}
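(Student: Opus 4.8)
The plan is to reduce everything to one interval and then feed it into Corollary \ref{num}. Since $S_n$ has a single class of reflections, tensoring with the sign character negates $c$ and sends $\tau$ to $\tau^*$, so Proposition \ref{char} gives $U(\tau^*)=-U(\tau)$; as $\ell(\tau^*)=\ell(\tau)$ and the prescribed negative points of $\tau$ are the mirror images of the prescribed positive points of $\tau^*$, it is enough to bound the positive part of $U(\tau)$. Proposition \ref{bound} gives $U(\tau)\subseteq(-\infty,\tfrac{1}{N(\tau)}]$ and Proposition \ref{larghook} gives $[0,\tfrac1{\ell(\tau)}]\subseteq U(\tau)$; since $N(\tau)=\ell(\tau)-m_*(\tau)+1\le\ell(\tau)$, the whole statement comes down to the window $\bigl(\tfrac1{\ell(\tau)},\tfrac1{N(\tau)}\bigr]$, where I must show that the only unitary points are the unit fractions $\tfrac1k$ with $N(\tau)\le k\le\ell(\tau)-1$ (precisely the $\tfrac1k$ lying in the window, $m_*(\tau)-1$ of them).

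The mechanism that manufactures unit fractions is Corollary \ref{num}: for constant $c$, a unitary $L_c(\tau)$ whose support is a proper subvariety of $\h$ must have $c=\tfrac1m$. Consequently, if I can show that no unitary $L_c(\tau)$ with $c>\tfrac1{\ell(\tau)}$ has full support, then every unitary $c$ in the window has non-full support, Corollary \ref{num} forces $c=\tfrac1m$, and the inequalities $\tfrac1{\ell(\tau)}<c\le\tfrac1{N(\tau)}$ squeeze $N(\tau)\le m\le\ell(\tau)-1$. Thus the entire problem collapses to one assertion: a full-support $L_c(\tau)$ with $c>\tfrac1{\ell(\tau)}$ is never unitary. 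This is the step I expect to be the main obstacle, and it is genuinely necessary, since for irrational $c$ the category $\mathcal{O}_c$ is semisimple and $M_c(\tau)=L_c(\tau)$ does have full support.

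To see the shape of the obstruction, note first that the singular values of $c$ (where $\mathcal{O}_c(S_n,\Bbb C^n)$ is not semisimple) are finitely many rationals with bounded denominator; they cut the window into finitely many open strata on each of which $M_c(\tau)=L_c(\tau)$ has full support and, the contravariant forms being a continuous nondegenerate family, unitarity is constant by the argument of Proposition \ref{zer}(iii). On the first stratum, just above $\tfrac1{\ell(\tau)}$, non-unitarity is forced: $\beta_{c,\tau}$ is positive on $[0,\tfrac1{\ell(\tau)}]$ and degenerates at $c=\tfrac1{\ell(\tau)}$, where the largest hook of $\tau$ produces a singular vector, so the relevant Gram determinant vanishes there; being positive just below with the form nondegenerate just above, a sign change of this determinant yields a negative eigenvalue, which survives because $M_c=L_c$. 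The one-dimensional instance of this is exactly the bound of Proposition \ref{bound} (the $m=1$ case of Corollary \ref{lin}, via Corollary \ref{deg1con1}), but the higher-degree version of Corollary \ref{lin} is unavailable past the first singular value, because there the lower operators $F_{c,\tau,i}$ cease to be trivial.

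The real difficulty is to propagate non-unitarity across all the later strata: the signature of $\beta_{c,\tau}$ can change at each singular value, so the negative eigenvalue found above need not persist. The clean way I would aim for is a Shapovalov/Jantzen-type product formula writing the norm of a suitable family of vectors as a product of linear factors $\prod_i(1-h_ic)$ indexed by hook lengths $h_i$ of $\tau$, the largest being $\ell(\tau)$, followed by a combinatorial sign count showing that for every non-unit-fraction $c$ in the window at least one such norm is negative, whereas exactly at the unit fractions $\tfrac1k$ the offending vectors become null and die in $L_c(\tau)$ (matching the drop in support that Corollary \ref{num} exploits). Proving this norm formula and carrying out the sign bookkeeping is, I expect, the crux of the theorem; once it is in place, the reduction of the first two paragraphs closes the proof.
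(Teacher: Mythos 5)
Your reduction is sound and matches the paper's setup: symmetry via Proposition \ref{char} handles negative $c$, Propositions \ref{bound} and \ref{larghook} confine the problem to the window $(\tfrac{1}{\ell(\tau)},\tfrac{1}{N(\tau)}]$, and what remains is to kill the open intervals $I_k=(\tfrac{1}{N+k},\tfrac{1}{N+k-1})$, $k=1,\dots,m_*(\tau)-1$. But that is where your proof stops and the paper's proof begins: you explicitly defer the ``Shapovalov/Jantzen-type product formula'' as the crux you would ``aim for,'' and it is never proved. That formula is exactly the paper's Lemma \ref{fprod}: if $\tau_i$ denotes the partition obtained by shortening $i$ of the longest rows of $\tau$ by one box and adding $i$ parts equal to $1$, then $\tau\otimes S^i\h^*$ contains a \emph{unique} copy of $\tau_i$, and the restriction of $\beta_{c,\tau}$ to that copy in degree $i$ of $M_c(\tau)$ is, up to a positive scalar, $f_{\tau,i}(c)=(1-Nc)(1-(N+1)c)\cdots(1-(N+i-1)c)$. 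The paper proves this by induction on $i$: divisibility of $f_{\tau,m}$ by $f_{\tau,m-1}$ comes from the singular vector in $\tau_j$ at $c=\tfrac{1}{N+j-1}$ whose submodule reaches the copy of $\tau_m$ in degree $m$, and the missing linear factor is pinned down by differentiating the recursion of Proposition \ref{recfor} at $c=0$, giving $f_{\tau,m}'(0)=f_{\tau,m-1}'(0)-(N+m-1)$. With this lemma the theorem is one line (on $I_k$ the product $f_{\tau,k+1}$ has exactly one negative factor, so $\beta_{c,\tau}$ is negative on $\tau_{k+1}$); without it you have identified the right target but proved nothing beyond what Propositions \ref{bound} and \ref{larghook} already give.

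Two of your intermediate steps are also not solid. First, the claim that the Gram determinant's vanishing at $c=\tfrac{1}{\ell(\tau)}$ forces a sign change is unjustified as stated: a determinant can vanish to even order and stay nonnegative; only the product formula (where the factor $1-\ell c$ occurs simply) makes this rigorous. Second, the detour through Corollary \ref{num} buys nothing: to invoke it you must first show that no \emph{full-support} unitary $L_c(\tau)$ exists for $c>\tfrac{1}{\ell(\tau)}$, and for irrational $c$ (where $L_c(\tau)=M_c(\tau)$ has full support) that statement is precisely the non-unitarity you were trying to prove --- it can only come from the product formula, which then excludes every point of each $I_k$, rational or not, directly and makes Corollary \ref{num} superfluous.
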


\begin{proof} Let $N=N(\tau)$, $\ell=\ell(\tau)$, $m_*=m_*(\tau)$
(so $\ell=N+m_*-1$).
By Propositions \ref{bound} and \ref{larghook}, our job is to show that the intervals
$I_k=(\frac{1}{N+k},\frac{1}{N+k-1})$, $k=1,...,m_*-1$
do not intersect with $U(\tau)$.

Denote by $\tau_i, i=1,...,m_*$
the partition of $n$ obtained by reducing $i$ copies of the largest part
of $\tau$ by $1$, and adding $i$ copies of the part $1$.
Then it follows from the rule of tensoring by $\h^*$ that
$\tau\otimes S^i\h^*$ contains a unique copy of $\tau_i$.
This implies that for any $c$, $M_c(\tau)$ contains a unique
copy of $\tau_i$ in degree $i$. We have
$\beta_{c,\tau}|_{\tau_i}=f_{i,\tau}(c)(,)_{\tau_i}$, where $f$
is a scalar polynomial.

\begin{lemma}\label{fprod}
One has, up to scaling:
$$
f_{\tau,i}(c)=(1-(N+i-1)c)...(1-Nc).
$$
\end{lemma}

\begin{proof}
The proof is by induction in $i$. For the base we can take the
case $i=0$, which is trivial. To make the inductive step, assume
that the statement is proved for $i=m-1$ and let us prove it for
$i=m$. By the induction assumption, at $c=\frac{1}{N+j-1}$,
$j=1,...,m-1$, the module $M_c(\tau)$ has a singular vector
$u$ sitting in $\tau_j$ in degree $j$. Indeed, the contravariant
form on $\tau_j$ is zero at such $c$, and there can be no
singular vectors of lower degree, because
if one moves $i<j$ corner cells of $\tau$ to get a partition
$\sigma$, then $D_\tau-D_\sigma\le i(N+i-1)<i(N+j-1)$, so
$c(D_\tau-D_{\sigma})<i$.

Since $\tau\otimes
S^{m}\h^*$ contains $\tau_j\otimes
S^{m-j}\h^*$, which in turn contains $\tau_{m}$, we see that the submodule
generated by the singular vector $u$
contains the copy of $\tau_{m}$ in degree $m$, which implies
that $f_{\tau,m}$ is divisible by $f_{\tau,m-1}$.

Thus, to complete the induction step,
it suffices to show that
$$
f_{\tau,m}'(0)=f_{\tau,m-1}'(0)-N-m+1.
$$
To prove this formula, let us differentiate the equation of
Proposition \ref{recfor} with respect to $c$ at $c=0$.
We get
$$
F_{0,\tau,m}'(a_1...a_mv)=
$$
$$
\frac{1}{m}\sum_{j=1}^m\left(a_jF_{0,\tau,m-1}'(a_1...a_{j-1}a_{j+1}...a_mv)-
\sum_{s\in
S} [a_1...a_m,s]v\right).
$$
This can be rewritten, using tensor notation, as follows:
$$
F_{0,\tau,m}'=\frac{1}{m}\sum_{j=1}^m(F_{0,\tau,m-1}')_{\hat j}
-\frac{1}{m}(D_\tau-D_{\tau_m}),
$$
where the subscript $\hat j$ means that the operator acts in all
components of the tensor product but the $j$-th.
Since $\tau_m\subset \tau_{m-1}\otimes \h^*\subset \tau\otimes
S^m\h^*$, this equation implies
$$
f_{\tau,m}'(0)=f_{\tau,m-1}'(0)-\frac{1}{m}(D_\tau-D_{\tau_m})=
f_{\tau,m-1}'(0)-N-m+1,
$$
as desired.
\end{proof}

Now the theorem follows easily from Lemma \ref{fprod}.
Namely, we see that $L_c(\tau)$ is not unitary
on the interval $I_k$ because the polynomial
$f_{\tau,k+1}(c)$ is negative on this interval,
and hence the form $\beta_{c,\tau}$ is negative definite on
$\tau_{k+1}$.
\end{proof}

\begin{remark}
It follows from \cite{GGOR}
that a module $L_c(\tau)$ is thin (i.e., is killed by the KZ functor or,
equivalently, has support strictly smaller than $\h$) if and only
if $\tau$ is not $m$-regular, where $m$ is the denominator of $c$
(i.e., it contains some part at least $m$ times).
On the other hand, it is easy to show directly by looking at
Young diagrams that if $\tau$ is not $m$-regular,
and $m\ge N(\tau)$, then $\tau$ is a rectangular
diagram, $\tau=(p,...,p)$, and $m=N(\tau)=p$.
Thus Theorem \ref{conta} implies that the representations of
Proposition \ref{thinuni} are the only thin unitary
representations for $c>0$ (and a similar statement is valid for
$c<0$).
\end{remark}

The following result is a special case of Theorem \ref{genera}, but was known before
Theorem \ref{genera} was proved; here we give its original proof.

\begin{theorem}\label{taum}
Let $m=m_*(\tau)$, and $\tau_m$ be the diagram
obtained from $\tau$ by removing the last column and
concatenating it with the first one (as in the proof of Theorem \ref{conta}).
Then $\frac{1}{\ell(\tau)}\in U(\tau_m)$.
In particular, Theorem \ref{genera} holds
if the multiplicity $p=p_*(\tau)$ of the part $1$ in $\tau$
satisfies the inequality $p\ge m$.
\end{theorem}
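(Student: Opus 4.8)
The plan is to prove the isolated unitarity point $\frac1{\ell(\tau)}\in U(\tau_m)$ first, by combining a singular-vector computation with the limiting (Jantzen) argument of Proposition~\ref{zer}(iv), and then to derive the stated special case of Theorem~\ref{genera} from it by a short combinatorial bookkeeping. Set $c_0=\frac1{\ell(\tau)}$ and $N=N(\tau)$, so that $\ell(\tau)=N+m-1$ since $m=m_*(\tau)$. First I would exhibit a singular vector of type $\tau_m$ sitting in degree $m$ of $M_{c_0}(\tau)$. By Lemma~\ref{fprod} the form $\beta_{c,\tau}$ acts on the unique copy of $\tau_m$ in degree $m$ by $f_{\tau,m}(c)=(1-(N+m-1)c)\cdots(1-Nc)$, whose first factor vanishes at $c=c_0$; thus $\beta_{c_0,\tau}$ is zero on this copy of $\tau_m$, i.e. $\tau_m\subset J_{c_0}(\tau)$. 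On the other hand, the content estimate used in the proof of Lemma~\ref{fprod} (namely $D_\tau-D_\sigma\le i(N+i-1)$ when $i$ corner cells are moved) gives $c_0(D_\tau-D_\sigma)<i$ for every $i<m$, so $M_{c_0}(\tau)$ has no singular vectors in degrees $1,\dots,m-1$ and $J_{c_0}(\tau)$ vanishes in those degrees. Hence the lowest nonzero graded piece of $J_{c_0}(\tau)$ is in degree $m$ and is annihilated by $\h$, so the copy of $\tau_m$ above consists of singular vectors. It generates a nonzero submodule of $M_{c_0}(\tau)$ that is a quotient of $M_{c_0}(\tau_m)$, and therefore $L_{c_0}(\tau_m)$ is a composition factor of $M_{c_0}(\tau)$.

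Now I would apply Proposition~\ref{zer}(iv). For $c\in(-\frac1{\ell(\tau)},\frac1{\ell(\tau)})$ the module $M_c(\tau)$ is irreducible (Proposition~\ref{larghook}) and, since this interval contains the unitary point $c=0$, it is unitary (Proposition~\ref{zer}(iii)); taking $c=c_0-t$ with $t\in(0,\varepsilon)$ gives a family of irreducible unitary modules $L_{c_0-t}(\tau)$ whose limit as $t\to0$ is $M_{c_0}(\tau)$ equipped with the form $\beta_{c_0,\tau}$. Proposition~\ref{zer}(iv) then makes all composition factors of $M_{c_0}(\tau)$ unitary; in particular $L_{c_0}(\tau_m)$ is unitary, which is the assertion $\frac1{\ell(\tau)}\in U(\tau_m)$.

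For the second assertion, let $\mu\ne(n),(1^n)$ satisfy $p_*(\mu)\ge m_*(\mu)$; note $p_*(\mu)\ge1$, so $\mu$ has a part equal to $1$. By Theorem~\ref{conta} it is enough to place every point of the set of Theorem~\ref{genera} inside $U(\mu)$. The interval $[-\frac1{\ell(\mu)},\frac1{\ell(\mu)}]$ lies in $U(\mu)$ by Proposition~\ref{larghook}, and there are no negative discrete points: the smallest part of $\mu$ is $1$, so $m_*(\mu^*)=1$ and Theorem~\ref{genera} predicts $m_*(\mu^*)-1=0$ of them. For the positive discrete points, for each $m=2,\dots,m_*(\mu)$ I would form the partition $\rho$ obtained from $\mu$ by deleting $m$ of its parts equal to $1$ and adding $1$ to each of its top $m$ rows; this is legitimate precisely because $m\le m_*(\mu)$ (the top $m$ rows of $\mu$ are then equal) and $m\le p_*(\mu)$ (there are enough parts equal to $1$), the latter being exactly the hypothesis. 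A direct check gives $m_*(\rho)=m$, $\rho_m=\mu$, and $\ell(\rho)=\ell(\mu)-m+1$, so the first assertion yields $\frac1{\ell(\mu)-m+1}\in U(\mu)$; as $m$ ranges over $2,\dots,m_*(\mu)$ these are exactly the points $\frac1k$ with $N(\mu)\le k<\ell(\mu)$. Combined with Theorem~\ref{conta}, this identifies $U(\mu)$ with the set of Theorem~\ref{genera}.

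I expect the first assertion to be the main obstacle, and within it the claim that the degree-$m$ copy of $\tau_m$ really consists of singular vectors — that is, that the vanishing of $f_{\tau,m}$ at $c_0$ is not accompanied by lower-degree singular vectors that would let $J_{c_0}(\tau)$ descend below degree $m$. This is exactly where the content estimate of Lemma~\ref{fprod} is essential, and where one must be careful to conclude that $L_{c_0}(\tau_m)$ is genuinely a composition factor. The combinatorics of the second assertion is routine once the operation $\rho\mapsto\rho_{m_*(\rho)}$ is inverted, the only genuine input being the hypothesis $p_*\ge m_*$, which guarantees the construction of $\rho$ for every $m$ up to $m_*(\mu)$ and hence that every positive discrete point is attained.
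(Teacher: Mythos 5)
Your proof is correct and follows essentially the same route as the paper's: produce the singular copy of $\tau_m$ in degree $m$ of $M_{1/\ell(\tau)}(\tau)$ via Lemma \ref{fprod} and the content estimate (exactly as in the proof of Theorem \ref{conta}), then view $M_{1/\ell(\tau)}(\tau)$ as a limit of the irreducible unitary modules $M_c(\tau)$, $c<1/\ell(\tau)$, and invoke Proposition \ref{zer}(iv) to make every composition factor, in particular $L_{1/\ell(\tau)}(\tau_m)$, unitary. The paper leaves the ``in particular'' clause implicit, and your inversion of the operation $\rho\mapsto\rho_{m_*(\rho)}$ under the hypothesis $p_*\ge m_*$ is precisely the intended (and correct) bookkeeping.
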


\begin{proof}
Since $[-\frac{1}{\ell(\tau)},\frac{1}{\ell(\tau)}]\subset
U(\tau)$, and (as was shown in the proof of Theorem \ref{conta})
$M_c(\tau)$ contains a singular vector in $\tau_m$ at
$c=1/\ell(\tau)$, the theorem follows from Proposition \ref{zer}(iv).
\end{proof}

\subsection{The Dunkl-Kasatani conjecture}

The following theorem was conjectured
(and partially proved) by Dunkl
(see the end of \cite{Du}) in 2005. It is also the rational
version of Kasatani's conjecture for double affine Hecke algebras
(\cite{Ka}, Conjecture 6.4), which was proposed at approximately the same time.
In the case when $c\notin \frac{1}{2}+\Bbb Z$, this theorem
was proved by Enomoto \cite{En} in 2006, using the results of Rouquier
on the connection between rational Cherednik algebras and q-Schur algebras,
andthe theory of crystal bases for quantum affine algebras.
Enomoto also explained that this theorem implies Kasatani's
conjecture. We give a different proof of this theorem,
based on the work \cite{BE}, which does not need the
condition $c\notin \frac{1}{2}+\Bbb Z$.

\begin{theorem} \label{length}
(i) Assume that $c=r/m$, where $r\ge 1, m\ge 2$ are integers with
$(r,m)=1$. Then the module $M_c(\Bbb C)$ has length
$l+1$, where $l=[n/m]$.
Namely, it has a strictly increasing filtration by submodules,
$$
0=I_c^0\subset I_c^1\subset...\subset I_c^{l+1}=M_c(\Bbb C),
$$
such that the successive quotients are irreducible.
In particular, $I_c^1=N_c$.

(ii) For $1\le j\le l+1$,
$I_c^j$ is a lowest weight representation, and its
lowest weight is the representation of the symmetric group corresponding to the
partition $\tau_c^j=(jm-1,m-1,...,m-1,s_j)$, where $n-(j-1)m=q_j(m-1)+s_j$,
$0\le s_j<m-1$ if $j\le l$, and $\tau_c^{l+1}=(n)$.

(iii) The variety $V(I_c^j)\subset \Bbb C^n$ defined by the ideal
$I_c^j, j=0,...,l+1$ is the variety $X_m^j$ of all vectors $(x_1,...,x_n)$
which, up to a permutation, have the form
$$
(x_1,...,x_{n-jm},a_1,...,a_1,a_2,...,a_2,...,a_j,...,a_j),
$$
where each $a_i$ is repeated $m$ times (here we agree that
$X_m^{l+1}=\emptyset$).

(iv) $I_c^j$ are radical ideals if and only if $r=1$.

(v) At the point $c=r/m$, the forms $\beta$ and $\gamma$ have a zero of order
exactly $l-j+1$ on the ideal $I_c^j$ for $j=1,...,l+1$.
\end{theorem}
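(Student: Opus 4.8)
The plan is to establish the five parts of Theorem \ref{length} in a coordinated way, leaning heavily on the restriction functor machinery from \cite{BE} together with the Gaussian-form results (Corollaries \ref{rad} and \ref{num}) already proved. The five statements are tightly interlocked: the filtration in (i), the lowest weights in (ii), the supports in (iii), the radicality in (iv), and the order of vanishing of the forms in (v) should all fall out of a single geometric picture, namely the nested family of varieties $X_m^j$. So first I would fix $c=r/m$ with $(r,m)=1$ and set up the candidate filtration directly by specifying $I_c^j$ as the (ideal of the) subvariety $X_m^j\subset\Bbb C^n$ consisting of vectors with at least $j$ groups of $m$ equal coordinates, as in (iii). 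The geometric content is clear: $X_m^{j}\supset X_m^{j+1}$, and the generic stratum of $X_m^j$ is the locus where exactly $j$ clusters of size $m$ have formed, with $W_b=(S_m)^j\times S_{n-jm}$ as stabilizer of a generic point $b\in X_m^j$.

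\textbf{The strata and the restriction functor.} The key step is to compute ${\rm Res}_b(L_c(\Bbb C))$ at a generic $b\in X_m^j$ using \cite{BE}. Since $H_c(S_m,\Bbb C^{m-1})$ has a finite-dimensional representation precisely when $c$ has denominator $m$ (this is the rank of the cluster), the restriction at $b$ should pick out exactly the contributions from clusters of size $m$; the $S_{n-jm}$ factor acting on the remaining free coordinates contributes the polynomial representation on that factor. I would show that ${\rm Res}_b(M_c(\Bbb C))$ decomposes compatibly with the clustering, so that the composition series of $M_c(\Bbb C)$ is read off stratum by stratum and has exactly $l+1$ factors where $l=[n/m]$ (the number of clusters can range from $0$ up to $l$). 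This simultaneously yields (i) and, by tracking which $W$-representation sits at the bottom of each $I_c^j$, the lowest weights $\tau_c^j$ in (ii). The lowest weight is pinned down by computing $h_c(\tau_c^j)$ and matching it against the content formula $h_c(\tau)=\frac n2-c\,{\rm ct}(\tau)$, checking that the stated hook partition $(jm-1,m-1,\dots,m-1,s_j)$ has the right content and sits at the correct graded degree.

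\textbf{Radicality and vanishing order.} For (iv), the ``if'' direction ($r=1$) follows from Corollary \ref{rad}: when $r=1$ each $L_c(\tau_c^j)$ is unitary (by Theorem \ref{taum} / Proposition \ref{thinuni}, since the relevant lowest weights are rectangular when $r=1$), so its annihilator $I_c(\tau_c^j)$ is radical, and one checks $I_c^j$ equals this annihilator. For the ``only if'' direction I would argue that for $r\ge 2$ the form on some $\tau_c^j$ already fails positivity, so Corollary \ref{rad} no longer forces radicality, and exhibit a nonreduced point of $X_m^j$ scheme-theoretically. Finally, for (v), I would argue by a degeneration/specialization count: near $c=r/m$ the form $\beta_{c,\Bbb C}$ vanishes to first order on each successive quotient as the parameter crosses the singular value, and since the filtration has $I_c^j$ nested inside the $(l-j+1)$ factors $I_c^j/I_c^{l+1}$, the total order of vanishing of $\beta$ (equivalently $\gamma$, as they have the same kernel and positivity) accumulates to exactly $l-j+1$ on $I_c^j$. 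I would make this precise via the Jantzen filtration: each jump in the Jantzen filtration contributes one to the vanishing order, and the length of the Jantzen filtration restricted to $I_c^j$ is $l-j+1$.

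\textbf{The main obstacle} will be the precise identification of ${\rm Res}_b(L_c(\Bbb C))$ at each stratum and the proof that the $I_c^j$ are genuinely distinct submodules forming a \emph{complete} filtration of length exactly $l+1$ (rather than merely a chain of subvarieties). Establishing that there are no further composition factors---i.e. that the filtration is strictly increasing with irreducible quotients and nothing is missed---requires controlling the full socle/radical structure, for which the support estimates from (iii) and the character/dimension bookkeeping from \cite{BE} must match exactly. The scheme-theoretic (as opposed to set-theoretic) description of $V(I_c^j)=X_m^j$ in (iii), and in particular distinguishing the reduced from the nonreduced structure in (iv) when $r\ge 2$, is where the argument is most delicate.
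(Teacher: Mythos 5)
There is a genuine gap, and it sits exactly where the theorem is most delicate: your construction of the ideals $I_c^j$ only makes sense for $r=1$. You define $I_c^j$ as ``the ideal of the subvariety $X_m^j$,'' i.e.\ as a radical ideal by fiat; but then part (iv), which asserts that $I_c^j$ is radical \emph{if and only if} $r=1$, is internally contradicted by your own definition. The deeper problem is that for $r\ge 2$ the radical ideal of $X_m^j$ is not even a Cherednik submodule: locally at a generic point of $X_m^1$ (stabilizer $S_m$, not $(S_m)^j\times S_{n-jm}$ as you wrote --- the free coordinates are generically distinct), invariance under Dunkl operators is governed by the polynomial representation $M_{r/m}(S_m,\Bbb C^{m-1},\Bbb C)$, and by \cite{BEG} its finite-dimensional quotient $L_{r/m}(S_m,\Bbb C^{m-1},\Bbb C)$ has dimension $r^{m-1}$, which equals $1$ only when $r=1$. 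So the locally defined submodule is the \emph{non-radical} maximal proper submodule $I_{r,m}\subset M_{r/m}(S_m,\Bbb C^m,\Bbb C)$, and the paper accordingly defines $I_c^j$ as the intersection of the $S_n$-images of $\Bbb C[x_1,\dots,x_{n-jm}]\otimes I_{r,m}^{\otimes j}$; radicality iff $r=1$ is then read off from this construction, not from unitarity. Your alternative route to (iv) via Corollary \ref{rad} also fails on its own terms: the composition factors $L_{1/m}(\tau_c^j)$ for $j\ge 2$ are \emph{not} unitary (their lowest weights $\tau_c^j=(jm-1,m-1,\dots,m-1,s_j)$ are not rectangular, and by the remark following Theorem \ref{conta} thin unitary modules at $c>0$ must have rectangular lowest weight), so there is no unitarity to feed into Corollary \ref{rad}.

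Two further steps you flag as ``the main obstacle'' are precisely the ones the paper has to work for, and your sketch does not supply them. First, the length bound: the induction on $n$ via restriction at a point with stabilizer $S_{n-1}$ breaks down when $m\mid n$, because that functor kills simples supported on the smallest stratum $X_m^l$; the paper repairs this by restricting instead at a point with stabilizer $(S_m)^l$, reducing to the fact (\cite{BEG}) that $M_c(S_m,\Bbb C^{m-1},\Bbb C)$ has exactly two composition factors, only one of them finite-dimensional. Second, the identification of the lowest weights in (ii): matching $h_c$-values through the content formula cannot pin down $\tau_c^j$ (many partitions share a content sum, and one needs \emph{existence} of singular vectors, not numerology); the paper takes this as input from Dunkl's work \cite{Du}, which provides singular vectors in the representations $\tau_c^j$ and hence both the lower bound $l+1$ on the length and the identification $I_c^j=W_j$ with the lowest weight submodules they generate. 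Your treatment of (v) via the Jantzen filtration is in the spirit of the paper, but it only becomes an argument once (i)--(iv) are correctly established.
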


\begin{remark}
The ideals of Theorem \ref{chercon} are
rational limits of the ideals defined in \cite{FJMM},
see also \cite{Ka}.
\end{remark}

\begin{proof}
Let us first construct the ideals $I_c^j$.
Assume first that $c=1/m$ (i.e., $r=1$).
In this case, define $I_c^j$ to be the defining ideals
of the varieties $X_m^j$. We claim that these ideals
are invariant under the Dunkl operators $D_i$, i.e. are submodules
under the rational Cherednik algebra. To check this, let
$f\in I_c^j$, and $U$ be the formal neighborhood in $\Bbb C^n$
of the $S_n$-orbit of a generic point $u\in X_m^j$. It is sufficient
to show that $D_if=0$ on the intersection $X_m^j\cap U$.
But this follows easily (using the ideology of \cite{BE}) from
the fact that the irreducible representation $L_c(S_m,\Bbb C^{m-1},\Bbb C)$
is 1-dimensional (so that the ideal of zero is a subrepresentation
of the polynomial representation $M_c(S_m,\Bbb C^{m-1},\Bbb C)$).

The case of $c=r/m$ for a general $r$ such that $(r,m)=1$ is slightly more
complicated but similar. Namely, let
$I_{r,m}$ be the maximal proper subrepresentation
in the polynomial representation $M_{r/m}(S_m,\Bbb C^m,\Bbb C)$.
We define $I_c^j$ to be the intersection of the $S_n$-images
of the ideal $\Bbb C[x_1,...,x_{n-jm}]\otimes I_{r,m}^{\otimes j}$.
Then the same argument as above shows that $I_c^j$ are
a nested sequence of subrepresentations of the polynomial representation $M_c(\Bbb C)$.
Moreover, since the representation $L_c(S_m,\Bbb C^{m-1},\Bbb C)$
is finite dimensional,
the variety defined by the ideal $I_c^j$ is $X_m^j$.
Also, it is easy to see from the definition that $I_c^j$ is
a radical ideal if and only if $r=1$. Thus, we have proved (iii) and (iv).

To prove the rest of the theorem, we need the following lemma.

\begin{lemma}\label{lenpol} The length of the polynomial representation
$M_c(\Bbb C)$ is $l+1$, and its composition factors are $L_c(\tau_c^j)$,
$j=1,...,l+1$.
\end{lemma}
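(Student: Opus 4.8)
The plan is to establish the Grothendieck-group identity $[M_c(\Bbb C)]=\sum_{j=1}^{l+1}[L_c(\tau_c^j)]$ in $K_0({\mathcal O}_c(S_n,\Bbb C^n))$ by locating the composition factors one support stratum at a time, along the chain $X_m^0\supsetneq X_m^1\supsetneq\cdots\supsetneq X_m^l\supsetneq X_m^{l+1}=\emptyset$. One half is free: the ideals $I_c^0\subsetneq I_c^1\subsetneq\cdots\subsetneq I_c^{l+1}=M_c(\Bbb C)$ constructed above are strictly nested, since their zero sets $X_m^j$ are pairwise distinct, so $M_c(\Bbb C)$ has length at least $l+1$. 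The entire content therefore lies in the reverse bound: each stratum $X_m^j$ supports at most one composition factor, counted with multiplicity.

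For the upper bound I would use the restriction functors ${\rm Res}_b$ of \cite{BE}. Fix $b$ generic in $X_m^j$, so that the stabilizer is $W_b=(S_m)^j$ and $\h/\h^{W_b}=(\Bbb C^{m-1})^j$; then ${\rm Res}_b$ is exact into ${\mathcal O}_c(S_m,\Bbb C^{m-1})^{\boxtimes j}$ and sends the class $[M_c(\Bbb C)]$ to $[M_c(S_m,\Bbb C^{m-1},\Bbb C)]^{\boxtimes j}$ (restriction of the trivial standard is the trivial standard). The key local input is the case $n=m$ of the lemma itself: at $c=r/m$ the rank-$(m-1)$ polynomial representation $M_c(S_m,\Bbb C^{m-1},\Bbb C)$ has length two, its unique $\{0\}$-supported constituent being the finite-dimensional module $L_c(S_m,\Bbb C^{m-1},\Bbb C)$ (whose existence is the Berest--Etingof--Ginzburg finite-dimensional representation), occurring once. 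Consequently $M_c(S_m,\Bbb C^{m-1},\Bbb C)^{\boxtimes j}$ has exactly one simple constituent supported at the origin, namely $L_c(S_m,\Bbb C^{m-1},\Bbb C)^{\boxtimes j}$, with multiplicity one.

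Now I would run the counting argument. By exactness, the multiplicity of origin-supported simples in ${\rm Res}_b M_c(\Bbb C)$ equals $\sum_\sigma [M_c(\Bbb C):L_c(\sigma)]\cdot(\text{origin multiplicity of }{\rm Res}_b L_c(\sigma))$. If $L_c(\sigma)$ has support exactly $X_m^j$ then, $b$ being generic in that stratum, ${\rm Res}_b L_c(\sigma)$ is nonzero and supported at the origin (the slice transverse to the stratum is $0$-dimensional), hence contributes at least $1$; all remaining terms are nonnegative. Comparing with the computed total of $1$ forces $\sum_{\sigma:\,{\rm supp}=X_m^j}[M_c(\Bbb C):L_c(\sigma)]\le 1$. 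Summing over $j=0,\dots,l$ gives length $\le l+1$, so with the lower bound the length is exactly $l+1$, every inequality is an equality, and there is precisely one composition factor $L_c(\sigma_j)$ with support $X_m^j$. The explicit filtration is then forced to be a composition series, and $I_c^{j}/I_c^{j-1}$ is the unique simple supported on $X_m^{j-1}$.

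It remains to identify $\sigma_{j-1}$ with $\tau_c^{j}$. Here I would feed the restriction computation back: ${\rm Res}_b L_c(\sigma_{j-1})=L_c(S_m,\Bbb C^{m-1},\Bbb C)^{\boxtimes(j-1)}$, and the \cite{BE} description of ${\rm Res}_b$ on lowest weights, together with the constraint $h_c(\sigma)-h_c(\Bbb C)\in\Bbb Z_{\ge 0}$ and the block decomposition, singles out $\tau_c^{j}=(jm-1,m-1,\dots,m-1,s_j)$ as the only partition of $n$ with support $X_m^{j-1}$ occurring in the polynomial representation. The induction is on $n$, with the cases $n<m$ (where $M_c(\Bbb C)$ is irreducible because $c=r/m$ is not a singular value, since $m\nmid d$ for every degree $d\le n$) and $n=m$ as the base. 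I expect the main obstacle to be the local base case $n=m$ --- proving that $M_c(S_m,\Bbb C^{m-1},\Bbb C)$ has length exactly two with no spurious finite-dimensional constituents, which cannot be reached by ${\rm Res}_b$ and instead needs the structure of the finite-dimensional representation directly --- together with the clean matching of the recovered lowest weight against the combinatorial formula for $\tau_c^j$; the exactness and support behavior of ${\rm Res}_b$ from \cite{BE} supply the rest.
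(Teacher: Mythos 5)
Your per-stratum counting argument for the length bound is correct, and it reorganizes the paper's proof in a genuinely different (and arguably cleaner) way. The paper inducts on $n$, restricting to a point with stabilizer $S_{n-1}$, and needs a separate patch when $m\mid n$ (restriction at a point with stabilizer $(S_m)^l$) because the first functor kills simples supported on the deepest stratum. You instead restrict once, for each $j$, to a generic point of $X_m^j$ with stabilizer $(S_m)^j$ and count origin-supported constituents of ${\rm Res}_b M_c(\mathbb{C})$, whose class is $[M_c(S_m,\mathbb{C}^{m-1},\mathbb{C})]^{\boxtimes j}$. The external inputs are the same as the paper's: exactness, non-vanishing on simples whose support contains $b$, and the support formalism of \cite{BE}, plus the fact from \cite{BEG} that $M_{r/m}(S_m,\mathbb{C}^{m-1},\mathbb{C})$ has length two with exactly one finite-dimensional constituent. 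Your inequality is also set up in the right direction: restrictions of factors supported on larger strata can only add origin-supported constituents, so the computed total of $1$ still forces $\sum_{{\rm supp}=X_m^j}[M_c(\mathbb{C}):L_c(\sigma)]\le 1$, and summing over $j$ (using, from \cite{BE}, that every factor's support is some $X_m^j$) gives length at most $l+1$; the strictly nested ideals $I_c^j$ give the lower bound.

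However, there is a genuine gap: the lemma asserts not only that the length is $l+1$ but that the composition factors are the $L_c(\tau_c^j)$, and your argument does not establish this identification. Your counting shows there is exactly one factor per stratum and that the $I_c^j$ form a composition series, but it says nothing about the lowest weight of $I_c^{j}/I_c^{j-1}$. The closing step you propose --- that the \cite{BE} ``description of ${\rm Res}_b$ on lowest weights,'' the constraint $h_c(\sigma)-h_c(\mathbb{C})\in\mathbb{Z}_{\ge 0}$, and the block decomposition single out $\tau_c^j$ --- does not go through: \cite{BE} contains no branching rule computing ${\rm Res}_b L_c(\sigma)$ in terms of the partition $\sigma$ (such rules are substantially harder and were developed later), and the integrality constraint $c\,({\rm ct}((n))-{\rm ct}(\sigma))\in\mathbb{Z}_{\ge 0}$ is satisfied by many partitions. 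Knowing abstractly that a unique factor has support $X_m^{j-1}$ does not name its partition unless you exhibit a copy of some specific $L_c(\tau)$ inside $M_c(\mathbb{C})$. This is exactly the ingredient the paper takes from Dunkl \cite{Du}: $M_c(\mathbb{C})$ has explicit singular vectors of type $\tau_c^j$, which at once gives the lower bound on the length and identifies the factors. Equivalently, you would have to compute the lowest-weight space of the ideal $I_c^1$ (the minimal-degree polynomials in it, spanned by products of Vandermonde determinants), which is precisely Dunkl's computation; without citing it or reproving it, the lemma as stated is not proved.
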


\begin{proof}
It is shown in \cite{Du} that $M_c(\Bbb C)$ has singular vectors
living in $\tau_c^j$, so these irreducible representations do occur
in the composition series, so that the length of $M_c(\Bbb C)$ is at least $l+1$.

We prove that the length is in fact exactly $l+1$ (i.e. no other composition factors occur)
by induction in $n$. The base of induction is trivial, so we only need to justify the inductive step.
For this purpose, let $b\in \Bbb C^n$ be a point with stabilizer
$S_{n-1}$, and consider the restriction functor ${\rm Res}_b: {\mathcal O_c}(S_n,\Bbb C^n)\to
{\mathcal O}_c(S_{n-1},\Bbb C^n)$ defined in \cite{BE}. This functor is exact.
Moreover, the support of any simple object in ${\mathcal O}_c(S_n,\Bbb C^n)$
is $X_m^j$ for some $j$, $0\le j\le l$, so if $n$ is not divisible by $m$,
the functor ${\rm Res}_b$ does not kill any simple objects
(as $b\in X_m^j$ for all $j$ in this case). This implies that
$$
{\rm length}(M_c(S_n,\Bbb C^n,\Bbb C))\le
{\rm length}({\rm Res}_b(M_c(S_n,\Bbb C^n,\Bbb C)))=
{\rm length}(M_c(S_{n-1},\Bbb C^n,\Bbb C)).
$$
But ${\rm length}(M_c(S_{n-1},\Bbb C^n,\Bbb C))=l+1$ by the induction
assumption, so we are done.

It remains to consider the case when $n=ml$.
In this case, we have ${\rm length}(M_c(S_{n-1},\Bbb C^n,\Bbb C))=l$,
which is even better for us, but the problem is that now the functor ${\rm Res}_b$ may
(and actually does) kill simple objects, as $b\notin X_m^l$. However,
we still have $b\in X_m^j$, $j<l$, so the above argument shows that
the composition series of $M_c(S_n,\Bbb C^n,\Bbb C)$ is as desired, plus possibly some
additional simple modules supported on the variety $X_m^l$ (the smallest of all $X_m^j$).
So to prove the induction step (i.e. show that in fact there is no additional modules), it suffices
to show that the composition series of $M_c(S_n,\Bbb C^n,\Bbb C)$ contains at most one
simple module supported on $X_m^l$.

To do so, consider a point $b\in \Bbb C^n$ with stabilizer
$(S_m)^l$, and the corresponding functor
${\rm Res}_b: \
{\mathcal O_c}(S_n,\Bbb C^n)\to
{\mathcal O}_c((S_m)^l,\Bbb C^n)$.
This functor is exact and does not kill
any simple objects, as $b\in X_m^j$ for all $j$,
$0\le j\le l$. Thus, it suffices to show
that in the composition series of
${\rm Res}_b(M_c(S_n,\Bbb cn,\Bbb C))$
(or, equivalently, of
$M_c((S_m)^l,\Bbb C^n,\Bbb C)=
M_c(S_m,\Bbb C^m,\Bbb C)^{\otimes l}$)
there is at most one simple object
with support of dimension $l$.
So it is enough to check that
in the composition series of $M_c(S_m,\Bbb C^m,\Bbb C)$
there is at most one simple object with support of dimension $1$
(and all other terms have supports of larger dimension),
i.e. that in the composition series of $M_c(S_m,\Bbb C^{m-1},\Bbb C)$
there is at most one finite dimensional simple object.
But it is well known (see \cite{BEG}) that
this composition series involves only two simple objects,
only one of which is finite dimenional.
\end{proof}

Now we finish the proof of Theorem \ref{length}.
Lemma \ref{lenpol} implies that the quotient
$I_c^{j+1}/I_c^j$ is irreducible for each $j$.
Also, the support of this representation is $X_m^j$.
So since by \cite{BE}, the support of $L_c(\tau_c^j)$ is also $X_m^j$, we find that
$I_c^{j+1}/I_c^j=L_c(\tau_c^j)$.

\begin{lemma}\label{submo} Any submodule $E$ of $M_c(\Bbb C)$ coincides with
$I_c^j$ for some $j$.
\end{lemma}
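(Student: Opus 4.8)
The plan is to prove that $M_c(\Bbb C)$ is uniserial, i.e. that its lattice of submodules is exactly the chain $I_c^0\subset I_c^1\subset\cdots\subset I_c^{l+1}$ constructed above. The starting point is Lemma \ref{lenpol}: the composition factors $L_c(\tau_c^j)$ are pairwise non-isomorphic, so $M_c(\Bbb C)$ is multiplicity-free. From this I would first extract a general tool: any two submodules $E,E'$ with the same class in the Grothendieck group coincide. Indeed, comparing $[E]+[E']=[E+E']+[E\cap E']$ factor by factor, and using that every composition factor of $M_c(\Bbb C)$ occurs with multiplicity at most one, forces $[E\cap E']=[E]=[E']=[E+E']$; since $E\cap E'\subseteq E\subseteq E+E'$ have equal length this gives $E=E'$. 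Thus a submodule is determined by its set of composition factors, and it suffices to show that this set is always an initial segment of the chain.

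Next I would use that $M_c(\Bbb C)$, being cyclic (generated by $1$), has a unique maximal submodule, with simple head the unique irreducible quotient $L_c((n))$; in the chain this maximal submodule is $I_c^l$. Consequently any proper submodule $E$ satisfies $E\subseteq I_c^l$: the composite $M_c(\Bbb C)\to M_c(\Bbb C)/E\to\mathrm{head}(M_c(\Bbb C)/E)$ is a surjection onto a simple module, which must therefore be the canonical map onto $L_c((n))$, so $E\subseteq\ker(M_c(\Bbb C)\to L_c((n)))=I_c^l$. The lemma then follows by downward induction once I establish the analogous statement at every stage of the chain, namely
\[
(\ast)\qquad \text{each } I_c^j \text{ has simple head } L_c(\tau_c^j),\ \text{with unique maximal submodule } I_c^{j-1}.
\]
Granting $(\ast)$, any submodule $E$ is located by descent: if $E\ne I_c^{l+1}$ then $E\subseteq I_c^l$; if $E\ne I_c^l$ then $E\subseteq I_c^{l-1}$; continuing, $E$ must equal some $I_c^k$ (with $E=0=I_c^0$ the terminal case).

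The main obstacle is $(\ast)$, and I would prove it by realizing $I_c^j$ as a lowest weight module (which is exactly assertion (ii) of Theorem \ref{length}). By the construction in the proof of Theorem \ref{length}, following \cite{Du}, there is a singular vector $v_j$ of weight $\tau_c^j$ in $M_c(\Bbb C)$; the submodule $\langle v_j\rangle$ it generates is a quotient of $M_c(\tau_c^j)$ and hence has simple head $L_c(\tau_c^j)$. Since $v_j$ maps to a lowest weight generator of the simple quotient $I_c^{j}/I_c^{j-1}=L_c(\tau_c^j)$, one has $\langle v_j\rangle+I_c^{j-1}=I_c^j$, so it remains only to check the containment $I_c^{j-1}\subseteq\langle v_j\rangle$, after which $\langle v_j\rangle=I_c^j$ and $(\ast)$ is immediate. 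This last containment is the delicate point: I expect to obtain it from the nesting of Dunkl's singular vectors already exploited in the proofs of Lemma \ref{fprod} and Theorem \ref{length}, combined with the multiplicity-freeness of the first paragraph and the explicit support identification $V(I_c^j)=X_m^j$ of Theorem \ref{length}(iii), which together pin down that the submodule generated by $v_j$ recaptures all of $I_c^{j-1}$ and not merely a complement to the top quotient.
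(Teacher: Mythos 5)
Your reduction of the lemma to the claim $(\ast)$ is internally sound as far as it goes: the descent through unique maximal submodules is correct, and so is the identity $\langle v_j\rangle+I_c^{j-1}=I_c^j$ (since $\langle v_j\rangle$ has simple head $L_c(\tau_c^j)$, it cannot lie inside $I_c^{j-1}$, whose factors are the other simples). But the argument stops exactly where the real difficulty begins: the containment $I_c^{j-1}\subseteq\langle v_j\rangle$ is never proved, only ``expected''. Nothing you have on the table excludes the configuration in which $\langle v_j\rangle\cap I_c^{j-1}$ is a \emph{proper} submodule of $I_c^{j-1}$, so that $\langle v_j\rangle$ is a proper submodule of $I_c^j$ surjecting onto the top factor; multiplicity-freeness and the Grothendieck-group lemma give nothing here, because the class $[\langle v_j\rangle]$ is precisely what is unknown. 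Worse, the statement you invoke to ground $(\ast)$ --- Theorem \ref{length}(ii), i.e.\ that $W_j:=\langle v_j\rangle$ equals $I_c^j$ --- is deduced in the paper \emph{from} Lemma \ref{submo}, not the other way around: the paper proves the lemma first and only then concludes that since the unique irreducible quotient of $W_j$ is $L_c(\tau_c^j)$, ``by the above we must have $W_j=I_c^j$''. So using (ii), or anything equivalent to it, as an input is circular.

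The paper's own proof is genuinely different and avoids all of this scaffolding: it is support-theoretic. An arbitrary submodule $E\subseteq M_c(\Bbb C)$ is an ideal of $\Bbb C[\h]$, and by \cite{BE}, Section 3.8, its variety $V(E)$ must equal $X_m^j$ for some $j$. Then $M_c(\Bbb C)/E$ is supported on $X_m^j$, so its Jordan--H\"older factors can only be those $L_c(\tau_c^i)$ whose supports lie inside $X_m^j$, which forces $E\supseteq I_c^j$; and restricting to (the formal neighborhood of) a generic point of $X_m^j$ gives the reverse inclusion $E\subseteq I_c^j$. These are exactly the tools you gesture at for your ``delicate point''; once actually deployed, they prove the lemma directly for every $E$ at once, and the uniseriality framework (multiplicity-freeness, heads, descent) becomes superfluous. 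To salvage your route you would need an independent proof of the nesting $W_{j-1}\subseteq W_j$ of Dunkl's submodules --- which the paper obtains only as a \emph{consequence} of the lemma --- or else run the support argument, at which point you should simply run it for arbitrary $E$.
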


\begin{proof}
Let $X$ be the variety defined by $E$.
Then $X=X_m^j$ for some $j$ (by \cite{BE}, Section 3.8).
So $M_c(\Bbb C)/E$ may involve in its Jordan-Holder series
only $L_c(\tau_c^i)$ with $i>j$. This means that $E\supset
I_c^j$. On the other hand, restricting to a generic point of
$X_m^j$ (as in \cite{BE}), we see that we must have $E\subset
I_c^j$. This implies $E=I_c^j$, as desired.
\end{proof}

Finally, recall again from \cite{Du} that
$M_c(\Bbb C)$ contains singular vectors in representations
$\tau_c^j$, $j=1,...,l+1$. Let $W_j$ be the highest
weight submodules of $M_c(\Bbb C)$ generated by
these singular vectors. The unique irreducible quotient of $W_j$ is
$L_c(\tau_c^j)$, so by the above we must have $W_j=I_c^j$.
In particular, $I_c^j=W_j$ are a nested sequence of lowest
weight modules, as anticipated in \cite{Du}, Section 6.

We have now established (i) and (ii).
To establish the remaining statement (v),
it suffices to observe that it follows from (i)-(iv) that
the Jantzen filtration on $M_c(\Bbb C)$ coincides with the
filtration by the ideals $I_c^j$.\footnote{This fact is discussed
in \cite{Ch2}, p.15, and also in \cite{Ch1}.}
\end{proof}

\subsection{Unitarity of the irreducible subrepresentation
of the polynomial representation}

\begin{theorem}\label{chercon}
Let $W=S_n$, $\h=\Bbb C^n$, and $2\le m\le n$.
Then $N_{1/m}\subset \Bbb C[x_1,...,x_n]$
is contained in $L^2(\h_{\Bbb R},d\mu_c)$.
In particular, $N_{1/m}$ is unitary.
Thus, the answer to
Cherednik's Question \ref{che2} for type A is affirmative.
\end{theorem}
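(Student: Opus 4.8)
The plan is to reduce, via Proposition \ref{che1}, to the single analytic statement that $N_{1/m}\subset L^2(\h_{\Bbb R},d\mu_c)$ at $c=1/m$, and then to prove this by a local analysis near the diagonal arrangement. First I would use Theorem \ref{length}(iii),(iv): at $c=1/m$ (so $r=1$) the submodule $N_{1/m}=I_c^1$ is the \emph{radical} ideal of the variety $X_m^1$, hence every $f\in N_{1/m}$ is a polynomial vanishing precisely where at least $m$ coordinates coincide. The density of $d\mu_c$ is $e^{-|z|^2/2}\prod_{i<j}|z_i-z_j|^{-2/m}$: the Gaussian factor gives rapid decay at infinity, and away from $\bigcup_{i<j}\{z_i=z_j\}$ the density is continuous, so the finiteness of $\int|f|^2\,d\mu_c$ is entirely a local question near the arrangement.

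Next I would stratify by coincidence pattern. Near a point $p$ whose coordinates fall into blocks of sizes $\mu_1,\dots,\mu_k$ (stabilizer $\prod_a S_{\mu_a}$), introduce transverse coordinates $w^{(a)}\in\Bbb R^{\mu_a-1}$ to the block diagonals; the cross-block factors of the density are smooth and nonzero at $p$, so up to a bounded factor the density is $\prod_a D_a(w^{(a)})$ with $D_a=\prod_{i<j\in\text{block }a}|w_i-w_j|^{-2/m}$, homogeneous of degree $-\mu_a(\mu_a-1)/m$. A scaling count in $\Bbb R^{\mu_a-1}$ shows $D_a$ is locally integrable exactly when $\mu_a<m$; such blocks can be integrated out harmlessly. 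The only dangerous blocks are those with $\mu_a\ge m$, and it is precisely along these that $f$ vanishes.

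For the dangerous blocks I would exploit the product structure. Locally $X_m^1=\bigcup_{a:\mu_a\ge m}\pi_a^{-1}(Y_a)$, where $Y_a\subset\Bbb R^{\mu_a-1}$ is the cone ``at least $m$ of the $\mu_a$ coordinates coincide'' and $\pi_a$ is the projection onto the $a$-th transverse factor. Since the $Y_a$ involve disjoint groups of variables, the ideal of the union is the product $\prod_a\langle I(Y_a)\rangle$, so $f$ satisfies a pointwise bound $|f(w)|\le C\prod_a|w^{(a)}|^{d_a}$, where $d_a$ is the minimal degree of the homogeneous ideal $I(Y_a)$. Now $I(Y_a)$ is exactly $N_{1/m}(S_{\mu_a})$, so by Theorem \ref{length} applied to $S_{\mu_a}$ its minimal degree is the degree of the generating singular vector, $d_a=h_{1/m}(\tau_c^1)-h_{1/m}(\Bbb C)$ for the rank-$\mu_a$ algebra. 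By Fubini,
$$
\int\prod_a|w^{(a)}|^{2d_a}D_a\,dw=\prod_a\int|w^{(a)}|^{2d_a}D_a\,dw^{(a)},
$$
and each factor converges iff $2d_a+\mu_a-1>\mu_a(\mu_a-1)/m$. Evaluating $d_a$ through $h_c(\tau)=\tfrac{\mu_a}{2}-c\,\mathrm{ct}(\tau)$ turns this into an elementary inequality on the partition $\tau_c^1(\mu_a)=(m-1,\dots,m-1,s)$, which I expect to hold with room to spare for every $m\le\mu_a\le n$. Assembling the local estimates over the finitely many stratum types, with the Gaussian weight controlling the noncompact along-stratum directions, yields $N_{1/m}\subset L^2(\h_{\Bbb R},d\mu_c)$, and unitarity then follows from Proposition \ref{che1}.

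The hard part will be the two structural inputs feeding the power count, rather than the count itself. The first is establishing that the minimal transverse order of vanishing of $f$ along each stratum is genuinely $d_a$, which relies on $N_{1/m}$ being radical (Theorem \ref{length}) together with the disjoint-variable product decomposition $\prod_a\langle I(Y_a)\rangle$ of its localization. The second is producing local bounds that are uniform in $p$ along each stratum, so that the transverse integrals can be assembled against the Gaussian weight over noncompact strata; this globalization, while routine in spirit, is where the argument must be made careful.
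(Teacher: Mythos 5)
Your reduction to the $L^2$ statement via Proposition \ref{che1}, and your structural inputs (radicality of $N_{1/m}$ and the local product decomposition over blocks, both from Theorem \ref{length}(iii),(iv)), are sound. The gap is in the analytic core: the claim that each block factor $\int |w^{(a)}|^{2d_a}D_a\,dw^{(a)}$ converges iff $2d_a+\mu_a-1>\mu_a(\mu_a-1)/m$ is false whenever $\mu_a\ge m+1$, and in fact every such factor is $+\infty$. The weight $D_a$ is singular not only at $w^{(a)}=0$ but along every sub-diagonal of the block where some $\nu$ of its coordinates coincide. For $\nu=m$ such a sub-diagonal has codimension $m-1$ while the corresponding singular factor has homogeneity exactly $-\binom{m}{2}\cdot\frac{2}{m}=-(m-1)$, so $D_a$ is already (logarithmically) non-integrable transversally to it; at points of this sub-diagonal away from the origin your prefactor $|w^{(a)}|^{2d_a}$ is bounded below and gives no help. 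The root cause is that the bound $|f(w)|\le C\prod_a|w^{(a)}|^{d_a}$, extracted from the minimal degree of the ideal, records only the vanishing of $f$ at the center $p$ of the stratum, whereas integrability requires exploiting the vanishing of $f$ along all of $X_m^1$ near $p$, at every scale and along every sub-diagonal of size $\ge m$. As written, your argument is correct essentially only when $n=m$ (where the only dangerous stratum is the full diagonal); for $n>m$ it would have to be replaced by a genuinely multiscale induction (decompose $U_p$ by nearest sub-diagonal, and use the transverse vanishing order along each one), which is a different and substantially harder argument than a single power count per block. Separately, you never verify the ``elementary inequality'' you rely on; it does hold (one can check $2d_a+\mu_a-1-\mu_a(\mu_a-1)/m>0$ for all dangerous block sizes), but it is not the binding constraint.

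For comparison, the paper sidesteps all local analysis. It studies $\xi_P(c)=\int|P|^2\,d\mu_c=K(c)\gamma_{c,\Bbb C}(P,P)$ as a meromorphic function of $c$: the Macdonald--Mehta formula controls the poles of $K(c)$, Theorem \ref{length}(v) (the Jantzen filtration orders of vanishing of $\gamma$ on the ideals $I_c^j$) controls the pole orders of $\xi_P$, and the key step, Proposition \ref{cont}, shows $N_{1/m}\subset N_{r/k}$ for every $r/k<\frac{1}{m-1}$ -- proved scheme-theoretically via the restriction functors of \cite{BE} together with a representation-theoretic disjointness argument (symmetrizing a product of Vandermonde determinants over a large enough symmetric group kills it). This cancels all potential poles of $\xi_P$ on $(-\infty,\frac{1}{m-1})$ and yields the $L^2$ statement without ever estimating an integral near the diagonal arrangement; in effect, the pole-order bookkeeping encodes exactly the multi-stratum vanishing data that your pointwise bound fails to capture.
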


\begin{remark}
Note that the statement that $N_{1/m}$ is unitary in Theorem
\ref{chercon} is a special case of Theorem \ref{genera} (taking into
account Theorem \ref{length}(ii)).
\end{remark}

\begin{proof}
Let $P\in \Bbb C[x_1,...,x_n]$, and
set
$$
\xi_P(c)=\int_{\Bbb R^n}|P(z)|^2d\mu_c(z).
$$
It is a standard fact that $\xi_P$ is a holomorphic function
of $c$ for ${\rm Re}c\le 0$ which extends meromorphically
to the whole complex plane. By Proposition \ref{inte},
$\xi_P(c)=K(c)\gamma_{c,\Bbb C}(P,P)$, which implies that the
poles of $\xi_P(c)$ may occur only for $c=r/m>0$, where $2\le
m\le n$ and $(r,m)=1$, and the order of such a pole is
at most $[n/m]+1$ (which is the order of the pole of $K(c)$ at
$c=r/m$). In fact, it is clear from Theorem \ref{length}(v) that
the order of the pole of $\xi_P(c)$ at $c=r/m$ is
at most $j-1$ if $P\in I_c^j$, $j>0$. In particular,
there is no pole of $\xi_c(P)$ for $c=r/m$ if $P\in N_{r/m}$.

The proof is based on the following proposition.

\begin{proposition}\label{cont}
If $P\in N_{1/m}$, $2\le m\le n$, then $\xi_P(c)$ has no poles
for $c<\frac{1}{m-1}$.
\end{proposition}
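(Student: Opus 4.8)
The plan is to convert the statement into a convergence estimate for the defining integral and to analyze it along the braid arrangement $\{z_i=z_j\}$. First I would record the reduction. Since $\xi_P(c)=K(c)\gamma_{c,\Bbb C}(P,P)$ and the modulus of the integrand $|P(z)|^2e^{-|z|^2/2}\prod_{i<j}|z_i-z_j|^{-2c}$ depends only on ${\rm Re}\,c$, the integral $\int_{\Bbb R^n}|P|^2\,d\mu_c$ defines a holomorphic function on any half-plane ${\rm Re}\,c<c_0$ on which it converges, which therefore coincides with the meromorphic continuation $\xi_P$ (they agree for ${\rm Re}\,c\le0$); in particular $\xi_P$ has no poles there. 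Hence it suffices to show that $\int_{\Bbb R^n}|P(z)|^2\,d\mu_c(z)$ converges for every real $c<\frac1{m-1}$; this rules out poles in the required range and, taking $c=\frac1m<\frac1{m-1}$, even gives $N_{1/m}\subset L^2(\h_{\Bbb R},d\mu_{1/m})$ directly.

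Next I would localize: the Gaussian controls the behaviour at infinity, so convergence is a local question along the diagonals. Stratify by the set partition $\pi$ recording coincidences of coordinates; near the stratum $F_\pi$ whose nontrivial blocks $B_1,\dots,B_t$ have sizes $k_1,\dots,k_t$, the density is, up to a smooth positive factor, $\prod_i\prod_{a<b\in B_i}|w_a-w_b|^{-2c}$ in the transverse variables. By resolution of singularities for this hyperplane arrangement, the integral converges provided that at every flat $F=F_\pi$ the scaling inequality
\[
2\,{\rm ord}_F(P)+\sum_i(k_i-1)\ >\ 2c\sum_i\binom{k_i}{2}
\]
holds, where ${\rm ord}_F(P)$ is the order of vanishing of $P$ along $F$.

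The third step is to check this edge inequality for all $c<\frac1{m-1}$. For a block with $k_i\le m-1$ one has $c<\frac1{m-1}\le\frac1{k_i}$, hence $2c\binom{k_i}{2}<k_i-1$, and the measure alone suffices there. For a block with $k_i\ge m$ the hypothesis $P\in N_{1/m}=I(X_m^1)$ forces $P$ to vanish whenever the $k_i$ coordinates of that block contain an $m$-fold coincidence; as distinct big blocks involve disjoint variables, $P$ lies in the product of the corresponding $m$-equal ideals, so ${\rm ord}_F(P)\ge\sum_{k_i\ge m}d_{\min}(k_i,m)$, where $d_{\min}(k,m)$ is the least degree of a homogeneous polynomial in $k$ variables vanishing whenever some $m$ of them coincide. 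Granting the lower bound $d_{\min}(k,m)\ge\frac{k(k-m+1)}{2(m-1)}$, the per-block inequality $2c\binom{k}{2}<2d_{\min}(k,m)+(k-1)$ holds for $c<\frac1{m-1}$, because it reduces to $(k-1)(m-1)\ge k(m-2)$, i.e. to $k\ge m-1$; summing the per-block inequalities over all blocks yields the edge condition at $F$ and finishes the convergence proof.

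The hard part is the combinatorial lemma bounding $d_{\min}(k,m)$ from below. The matching upper bound is easy: taking $\prod_{(a,b)\in E}(w_a-w_b)$ with $E$ the set of within-part pairs of a balanced $(m-1)$-partition makes every $m$-subset contain a pair by pigeonhole, giving degree $\sum_p\binom{n_p}{2}\ge\frac{k(k-m+1)}{2(m-1)}$ by convexity; but the lower bound on the minimal vanishing degree requires the geometry of the $m$-equal subspace arrangement, and I expect this, together with the routine but nontrivial justification of the flat-by-flat criterion by resolution, to be the main obstacle. Finally I would note a clean algebraic shortcut at the poles $c=\frac1{m'}$ with numerator $1$: since $X_{m'}^1\subset X_m^1$ for $m'\ge m$, radicality (Theorem \ref{length}(iv)) gives $N_{1/m}=I(X_m^1)\subseteq I(X_{m'}^1)=I_{1/m'}^1$, so $P\in I_{1/m'}^1$ and Theorem \ref{length}(v) bounds the pole order by $0$; only the poles $c=r/m'$ with $r\ge2$, where $I_c^1$ is not radical, genuinely need the analytic estimate above.
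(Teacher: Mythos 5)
Your route is genuinely different from the paper's (direct convergence estimates along the braid arrangement versus an algebraic ideal-containment argument), but as written it is not a proof: both load-bearing steps are left open, as you yourself acknowledge. The flat-by-flat sufficiency criterion needs an actual argument --- e.g.\ passing to the De Concini--Procesi wonderful model of the braid arrangement, where each point lies on exceptional divisors $E_{F_1},\dots,E_{F_r}$ indexed by a nested chain of flats, checking that the pullback of $P$ vanishes to order at least ${\rm ord}_{F_j}(P)$ along $E_{F_j}$, and then reading off integrability in the resulting normal crossings coordinates; this is doable but not ``routine.'' The decisive gap is the lower bound $d_{\min}(k,m)\ge\frac{k(k-m+1)}{2(m-1)}$, which you simply grant: without it the blocks of size $\ge m$ cannot be handled at all. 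Note, however, that this bound is available from the paper's own results: applying Theorem \ref{length}(i),(ii),(iv) with $r=1$ to $S_k$ acting on $\mathbb{C}^k$, the ideal of the $m$-equal arrangement in $k$ variables is $N_{1/m}$, a lowest weight module generated by the singular vector $\Delta_{q+1}^{\otimes s}\otimes\Delta_q^{\otimes m-1-s}$ (where $k=q(m-1)+s$, $0\le s<m-1$), so its minimal degree is exactly $s\binom{q+1}{2}+(m-1-s)\binom{q}{2}$, which is $\ge\frac{k(k-m+1)}{2(m-1)}$ by convexity. So the gap is fillable, but you did not fill it and instead flagged it as the main obstacle.

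The comparison with the paper is instructive, because your closing ``algebraic shortcut'' is precisely the paper's entire proof, except that the paper carries it out for \emph{all} candidate poles $c=r/k<\frac{1}{m-1}$, not only those with $r=1$. By Proposition \ref{inte} and Theorem \ref{length}(v), poles can occur only at such $c=r/k$, and $P\in N_{r/k}=I_{r/k}^1$ forces the pole order to be zero; so the paper proves the containment $N_{1/m}\subset N_{r/k}$ directly. When $N_{r/k}$ is not radical, membership is checked scheme-theoretically on the formal neighborhood of a generic point of $X_k^1$: by \cite{BE} this reduces to showing that the lowest weight representation $Q=(m-1,\dots,m-1,s)$ of $N_{1/m}$ is disjoint from $L_{r/k}(S_k,\mathbb{C}^{k-1},\mathbb{C})$ as an $S_k$-module; by \cite{BEG} the latter is the space of functions on $A\subset(\mathbb{Z}/r\mathbb{Z})^k$, so all its constituents have nonzero invariants under some $S_{n_1}\times\dots\times S_{n_r}$ with $n_1+\dots+n_r=k$, while $r(m-1)<k$ forces some $n_i>m-1$, and symmetrizing the Vandermonde product over such an $S_{n_i}$ gives zero. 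Consequently your final assertion that the poles $c=r/k$ with $r\ge 2$ ``genuinely need the analytic estimate'' is wrong: those are exactly the poles the paper eliminates algebraically. If you wish to keep the analytic approach (which, if completed, has the merit of giving the $L^2$ statement of Theorem \ref{chercon} directly), you must supply the two missing ingredients above; otherwise, the radicality shortcut extends to all $r$ along the paper's lines.
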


This proposition implies Theorem \ref{chercon}.
Indeed, it implies that for \linebreak $c<\frac{1}{m-1}$,
$P\in L^2(\Bbb R^n,d\mu_c)$.

\begin{proof} (of Proposition
\ref{cont}).

Let $\frac{r}{k}<\frac{1}{m-1}$, $(r,k)=1$, so $r(m-1)<k\le n$.
Our job is to show that $N_{1/m}\subset N_{r/k}$, so that
$\xi_P(c)$ has no pole at $c=r/k$.

Let $S_c$ be the scheme defined by the ideal $N_c$.
We have to show that $S_{r/k}\subset S_{1/m}$.

By Theorem \ref{length}(iii,iv),
$S_{1/m}$ is the reduced scheme (variety)
$X_m^1$, which is the set of all points
$(x_1,...,x_n)$ such that
some $m$ coordinates coincide with each other.
The scheme $S_{r/k}$ is not necessarily reduced, but
by Theorem \ref{length}(iii),
the underlying variety $\bar S_{r/k}$
is $X_k^1$.

By Theorem \ref{length}(i),
$N_{r/k}$ is the set of all $f\in \Bbb C[x_1,...,x_n]$
such that $f$ vanishes in the formal neighborhood in $S_{r/k}$ of
a generic point of $X_k^1$, i.e. a point $u=(x_1,...,x_n)$
where some $k$ coordinates coincide with each other, and there is
no other coincidences. Therefore,
it suffices to check that for
any $f\in N_{1/m}$, $f$ vanishes on the formal neighborhood
in $S_{r/k}$ of $u$.
For this, it suffices to check that this holds if $f$ belongs to
the lowest weight subspace $Q$ of $N_{1/m}$.
By using \cite{BE} and restricting to the formal neighborhood
of $u$, we see that it is sufficient to show that the
representation $Q$, regarded as a representation
of $S_k$, is disjoint from
the representation $L_{r/k}(S_k,\Bbb C^{k-1},\Bbb C)$ regarded as an
$S_k$-module (i.e., there is no nontrivial homomorphisms
between these two representations).

Now, we know from Theorem \ref{length}(ii) that the lowest weight
subspace $Q$ is the representation of $S_n$ corresponding to the
Young diagram $\tau_m^1=(m-1,...,m-1,s)$, where
$n=q(m-1)+s$. Also recall from
\cite{BEG} that $L_{r/k}(S_k,\Bbb C^{k-1},\Bbb C)$,
as a representation of $S_k$, is
the space of complex functions on the group $A\subset (\Bbb Z/r\Bbb Z)^k$
of vectors with zero sum of coordinates. Such a vector has
a coordinate $i\in \Bbb Z/r\Bbb Z$ with multiplicity $n_i$,
and $\sum n_i=k$. So irreducible representations of $S_n$
that occur in $L_{r/k}(S_k,\Bbb C^{k-1},\Bbb C)$
are those representations $Y$ for which
$Y^{S_{n_1}\times...\times S_{n_r}}\ne 0$ for some
$n_1,...,n_r$ such that $n_1+...+n_r=k$.

However, we claim that $Q^{S_{n_1}\times...\times S_{n_r}}=0$
for any $n_1,...,n_r$ with $n_1+...+n_r=k$.
Indeed, it is standard that $Q$ is generated by the polynomial
$P:=\Delta_{q+1}^{\otimes s}\otimes \Delta_q^{\otimes m-1-s}$, where
$\Delta_p$ is the Vandermonde determinant in $p$ variables.
On the other hand, since
$r(m-1)<k$, we have $m-1<n_i$ for some $i$.
This means that if we symmetrize $P$ with respect to any
conjugate of the subgroup $S_{n_i}$, we get zero, as desired.
\end{proof}

Theorem \ref{chercon} is proved.
\end{proof}

\section{Appendix: Proof of Theorem \ref{genera}}

\centerline{\large Stephen Griffeth}
\vskip .13in

The purpose of this appendix is to apply the results in Suzuki's paper 
\cite{Su}, which classifies and describes those irreducible modules in 
category $\OO$ on which the Cherednik-Dunkl subalgebra acts diagonalizably,
to the problem of unitarity of $L_c(\tau)$.  In Theorem~3.7.2 of his book 
\cite{Ch3}, I. Cherednik proved results analogous to Suzuki's for the double 
affine Hecke algebra of type $A$ and it would be interesting to apply them to 
classify the unitary modules for the double affine Hecke algebra.  

We will use the definitions and notation of sections 1-5 of the present paper, 
except as noted in this paragraph.  In order to conform with Suzuki's
notation, we set $\kappa=-1/c$, and write $\HH_\kappa$ for the rational
Cherednik algebra attached to $S_n$ acting on its permutation
representation.  Let $y_1,\dots,y_n$ be the standard basis of the 
permutation representation $\hh=\CC^n$ of $S_n$ and let $x_1,\dots,x_n$ be 
the dual basis of $\hh^*$.  As in \cite{Su} the commutation relation 
for $y_i$ and $x_j$ is
\begin{equation}
y_i x_j=\begin{cases} x_j y_i-s_{ij} \quad &\hbox{if $i \neq j$, and} \\
x_i y_i+\kappa+\sum_{k \neq i} s_{ik} \quad &\hbox{if $i=j$.} \end{cases}
\end{equation}  This differs from the relations used in Section 5 of this
paper only by a scaling that does not affect the question of unitarity.  We
write $L_\kappa(\tau)$ for the irreducible representation corresponding
to a partition $\tau$.  For the remainder of the paper we assume that
$\kappa \in \QQ$.

The \emph{Cherednik-Dunkl subalgebra} of $\HH_\kappa$ is generated by the
elements
\begin{equation}
z_i=y_i x_i-\phi_i \quad \text{where} \quad \phi_i=\sum_{1 \leq j < i} s_{ij}
\end{equation}  The elements
$z_1,\dots z_n$ are pairwise commutative.  Let $w_0 \in S_n$ be the longest
element, with $w_0(i)=n-i+1$ for $1 \leq i \leq n$.  By the defining
relations for $\HH_\kappa$,
\begin{align*}
w_0 z_i w_0^{-1}&=y_{n-i+1} x_{n-i+1}-\sum_{1 \leq j < i } s_{n-i+1,n-j+1} \\
&=x_{n-i+1} y_{n-i+1}+\kappa+\sum_{j \neq n-i+1} s_{n-i+1,j}-\sum_{1 \leq j < i } s_{n-i+1,n-j+1} \\
&=\epsilon_{n-i+1}^\vee+\kappa
\end{align*} where as in Proposition~4.2 of Suzuki's paper \cite{Su},
\begin{equation}
\epsilon_i^\vee=x_i y_i+\sum_{1 \leq j <i} s_{ij}
\end{equation}  Note that $z_i$ and $\epsilon_i^\vee$ are invariant with
respect to the antiautomorphism $*$, and hence act diagonalizably on the
unitary modules $L_\kappa(\tau)$.  Suzuki's paper describes the irreducible
modules in category $\OO$ for $\HH_\kappa$ on which
$\epsilon_1^\vee,\dots,\epsilon_n^\vee$ (or equivalently $z_1,\dots,z_n$) act
diagonalizably and gives an explicit combinatorial description of their
weight space decompositions.

We will need the \emph{intertwining operators }
\begin{equation}
\sigma_i=s_i-\frac{1}{z_i-z_{i+1}}, \ \Phi=x_n s_{n-1} \cdots s_1, \
\text{and} \ \Psi=y_1 s_1 \cdots s_{n-1},
\end{equation} the fact that they map eigenvectors for $z_1,\dots,z_n$ to
eigenvectors, and the formulas
\begin{equation}
\sigma_i^2=\frac{(z_i-z_{i+1})^2-1}{(z_i-z_{i+1})^2}, \quad \Psi \Phi=z_1,
\end{equation}
\begin{equation}
\sigma_i^*=\sigma_i \quad \text{and} \quad \Phi^*=\Psi
\end{equation} where $*$ is the conjugate linear anti-automorphism 
of $\HH_\kappa$ with $x^*=Tx$, $y^*=Ty$, and $w^*=w^{-1}$
for $x \in \hh$, $y \in \hh$, and $w \in S_n$.  For any 
$f \in \HH_\kappa$, $f^*$ is the adjoint of $f$ with respect to the 
contravariant form on $L_\kappa(\tau)$.  All of this is by now standard;
proofs of the more general facts for the groups $G(r,1,n)$ may be found in
\cite{Gri1}.   

Let $\tau$ be a partition of $n$ of length $m$ and let $\kappa \in \QQ_{>0}$.
We regard $\tau$ as the subset of $\ZZ \times \QQ$ containing the points
$(i,j)$ for $i,j \in \ZZ$ with $1 \leq i \leq m$ and $1 \leq j \leq \tau_i$.  
Put $p=(-m,\kappa-m)$ and let $\widehat{\tau}$ be the subset of 
$\ZZ \times \QQ$ given by
\begin{equation}
\widehat{\tau}=\tau+\ZZ p.
\end{equation}  It contains $\tau$ as a subset.  A \emph{periodic tableau} on 
$\widehat{\tau}$ is a bijection $T:\widehat{\tau} \rightarrow \ZZ$ such that
\begin{equation}
\hbox{for all $b \in \widehat{\tau}$, we have $T(b+p)=T(b)-n$}
\end{equation} and a periodic tableau $T$ is \emph{standard} if for 
$(a,b) \in \widehat{\tau}$ such that $(a,b+1) \in \widehat{\tau}$, we have
\begin{equation}
T(a,b) < T(a,b+1)
\end{equation} and for $(a,b) \in \widehat{\tau}$ and $k \in \ZZ_{\geq 0}$ with 
$(a+k+1,b+k) \in \widehat{\tau}$ we have
\begin{equation}
T(a,b) < T(a+k+1,b+k).
\end{equation}  The \emph{content vector} of the tableau $T$ is
\begin{equation}
\text{ct}(T)=\left(\text{ct}(T^{-1}(1)),\dots,\text{ct}(T^{-1}(n)) \right), 
\quad \hbox{where $\text{ct}(a,b)=b-a$.}
\end{equation}  When $\kappa \in \ZZ$ we
will think of $\tau$ and $\widehat{\tau}$ as consisting of collections of 
boxes, and a tableau as a filling of those boxes with integers.

Having fixed $n \in \ZZ_{>0}$, write $P$ for the set of integer partitions 
of $n$.  Let $\kappa \in \QQ_{>0}$, and write $\kappa=s/r$ with 
$r,s \in \ZZ_{>0}$ and $(r,s)=1$.  Recall the definition of $N(\tau)$ from 
the second paragraph of Section~\ref{s1} and define
\begin{equation}
P_\kappa=\{\tau \in P \ | \ s \geq N(\tau^*)\} \quad
\hbox{where $\tau^*$ is the transpose of $\tau$.}
\end{equation}  Write
\begin{equation}
S(\widehat{\tau})=\{\hbox{standard tableaux $T$ on
$\widehat{\tau}$ with $T(b)>0$ for $b \in \tau$} \}
\end{equation}  Now we combine Theorems 4.8 and 4.12 of Suzuki's paper 
\cite{Su} into the following theorem:
\begin{theorem}[Suzuki] \label{SuzThm}
The set of diagonalizable irreducible $\HH_\kappa$-modules in $\OO$ is
$\{L_\kappa(\tau) \ | \ \tau \in P_\kappa\}$, and for
$\tau \in P_\kappa$ we have
\begin{equation*}
L_\kappa(\tau)=\bigoplus_{T \in S(\widehat{\tau})} L_\kappa(\tau)_{\text{ct}(T)},
\quad \text{with} \quad
\text{dim}_\CC( L_\kappa(\tau)_{\text{ct}(T)})=1 \
\hbox{for all $T \in S(\widehat{\tau})$,}
\end{equation*} where for a sequence $a_1,\dots,a_n$ of numbers
\begin{equation*}
L_\kappa(\tau)_{(a_1,\dots,a_n)}=
\{m \in L(\tau) \ | \ \epsilon_i^\vee.m=a_i m \ \hbox{for $1 \leq i \leq n$} \}
\end{equation*}
\end{theorem}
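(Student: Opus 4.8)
The plan is to prove this by the theory of calibrated (diagonalizable) representations, using the intertwining operators $\sigma_i,\Phi,\Psi$ as the main engine. First I would study the joint spectrum of the commuting operators $z_1,\dots,z_n$ (equivalently $\epsilon_1^\vee,\dots,\epsilon_n^\vee$) on an arbitrary diagonalizable irreducible $M$ in $\OO$. The operator $\sigma_i$ carries a weight space with eigenvalues $(\dots,a_i,a_{i+1},\dots)$ to the weight space with these two coordinates interchanged; by the relation $\sigma_i^2=\frac{(z_i-z_{i+1})^2-1}{(z_i-z_{i+1})^2}$ it requires $a_i \neq a_{i+1}$ to be defined, is invertible when $a_i-a_{i+1}\neq 0,\pm 1$, and degenerates to a square-zero map exactly when $a_i-a_{i+1}=\pm 1$. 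Diagonalizability therefore forces $a_i \neq a_{i+1}$ throughout, while the differences $\pm 1$ mark the boundaries where the interchanged weight need not occur. The operators $\Phi,\Psi$, with $\Psi\Phi=z_1$ and $\Phi^*=\Psi$, supply the affine shift: $\Phi$ removes a box at the end and reinserts it cyclically, shifting the spectrum by the vector $p=(-m,\kappa-m)$, and organizes the whole spectrum into a single orbit once a lowest weight is fixed.

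Second, I would translate these spectral constraints into the combinatorics of periodic standard tableaux on $\widehat{\tau}=\tau+\ZZ p$. The content convention $\text{ct}(a,b)=b-a$ is arranged so that the row step $(a,b)\to(a,b+1)$ raises the content by $1$, while the shifted-diagonal step $(a,b)\to(a+k+1,b+k)$ lowers it by $1$; these are precisely the content adjacencies $a_i-a_{i+1}=\pm 1$ flagged above, and the periodicity $T(b+p)=T(b)-n$ records the spectral shift induced by $\Phi$. The two standardness inequalities thus encode exactly the admissible orderings of the boxes compatible with the intertwiner degenerations, so that the possible content vectors on $M$ are identified with $\{\text{ct}(T): T \in S(\widehat{\tau})\}$ for a suitable partition $\tau$, each weight space being forced to be one-dimensional because the invertible intertwiners act transitively across the orbit.

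Third, to establish existence I would construct the calibrated modules explicitly: on the span of symbols $v_T$ indexed by $T \in S(\widehat{\tau})$, let $z_i$ act by $\text{ct}(T)_i$ and define $\sigma_i,\Phi,\Psi$, hence $s_i,x_j,y_j$, by the seminormal formulas dictated by the square relation, then verify that this yields a well-defined irreducible $\HH_\kappa$-module with lowest weight $\tau$. The final and most delicate step is the sharp classification: to prove that $L_\kappa(\tau)$ is diagonalizable if and only if $\tau \in P_\kappa$, that is $s \geq N(\tau^*)$. One must show that the lowest shape $\tau$ embeds as the positive part $\{b \in \tau: T(b)>0\}$ of some consistent periodic standard filling, and that when $s<N(\tau^*)$ every candidate filling is forced to collide across the periodic boundary, producing either equal adjacent contents or a $\pm 1$ difference whose degenerate intertwiner creates a Jordan block, hence non-diagonalizability. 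I expect this converse and the pinning down of the exact bound $N(\tau^*)$ to be the main obstacle, since it requires controlling the global interaction between the finite shape $\tau$ and its infinite periodic continuation $\widehat{\tau}$, rather than only the local intertwiner relations.
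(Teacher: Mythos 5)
The first thing to say is that the paper itself does not prove this statement: it is quoted wholesale from Suzuki, as the sentence preceding it makes explicit (``we combine Theorems 4.8 and 4.12 of Suzuki's paper \cite{Su}''), and the appendix then uses it as a black box in the proof of Theorem \ref{Unitary theorem}. So there is no proof in the paper to match yours against; what you have written is an attempt to reprove the cited result. Your strategy is the right machinery --- it is essentially the calibrated-representation theory (intertwiners $\sigma_i$, $\Phi$, $\Psi$ moving the joint $\ttt$-spectrum, the degenerations $a_i-a_{i+1}\in\{0,\pm 1\}$ matching the two standardness conditions, a seminormal construction indexed by $S(\widehat{\tau})$) that underlies Suzuki's own argument, and the individual spectral facts you state about $\sigma_i^2$, invertibility, and the shift encoded by $p=(-m,\kappa-m)$ are correct.

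However, as a proof the proposal has a genuine gap, and you flag it yourself: the entire content of the theorem --- that diagonalizability of $L_\kappa(\tau)$ holds exactly when $s\geq N(\tau^*)$, with the weight spaces exhausted by $S(\widehat{\tau})$ --- is deferred to ``the main obstacle'' with no argument supplied. Saying that for $s<N(\tau^*)$ ``every candidate filling is forced to collide across the periodic boundary'' restates what must be proven; one has to show that the absence of standard fillings compatible with the lowest weight $\tau$ actually produces a Jordan block inside $L_\kappa(\tau)$, and conversely that for $\tau\in P_\kappa$ the seminormal formulas satisfy all defining relations of $\HH_\kappa$ and yield a module in $\OO$ that is irreducible with lowest weight the Specht module $\tau$, hence isomorphic to $L_\kappa(\tau)$. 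Two intermediate claims are also asserted rather than argued: that diagonalizability forces $a_i\neq a_{i+1}$ (this needs the Jordan-block computation with $s_i$ and $z_i-z_{i+1}$, not merely the listed properties of $\sigma_i$, which is undefined precisely when $a_i=a_{i+1}$), and that weight spaces are one-dimensional ``because the invertible intertwiners act transitively'' (this needs that products of intertwiners along a closed path in the weight graph act by scalars, which is a separate lemma using the quadratic relations along the whole path). These are exactly the points where Suzuki's paper does the real work, so your sketch, while aimed in the right direction, does not yet substitute for the citation.
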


We now finish the proof of Theorem \ref{genera} started in Section 5. 
We rewrite the statement of Theorem \ref{genera} in terms 
of $\kappa=-1/c$ so that it becomes:
\begin{align*} 
\{\kappa \ | \ L_\kappa(\tau) \ \text{is unitary} \}&= 
\{\kappa \in \ZZ_{>0} \ | \ \kappa \geq N(\tau^*) \} \\
&\cup \{\kappa \in \ZZ_{<0} \ |
\ \kappa \leq -N(\tau) \} \\
&\cup \{\kappa \in \QQ \ | \ |\kappa| \geq \ell(\tau) \}.
\end{align*}  
This will follow from the results of Section 5 and 
Theorem~\ref{Unitary theorem} below.  First we prove a preparatory lemma.

Recall that $\ttt$ is the Cherednik-Dunkl subalgebra generated by 
$z_1,\dots,z_n$.  A weight of $\ttt$ on a module $M$ is thus described 
by the sequence $\alpha=(\alpha_1,\dots,\alpha_n)$ of complex numbers 
giving the action of $z_1,\dots,z_n$.

\begin{lemma} \label{numerical crit}
The module $L_\kappa(\tau)$ is unitary exactly if it is 
$\ttt$-diagonalizable and
for all weights $\alpha=(\alpha_1,\dots,\alpha_n)$ of $\ttt$ on 
$L_\kappa(\tau)$,
\begin{equation} \label{weight conditions}
\alpha_1 \geq 0 \quad \text{and} \quad (\alpha_i-\alpha_{i+1})^2 \geq 1
\end{equation}
\end{lemma}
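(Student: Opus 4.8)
The plan is to establish Lemma~\ref{numerical crit} as a ``numerical criterion'' that reduces the unitarity of $L_\kappa(\tau)$ to positivity of the contravariant form on a basis of $z$-eigenvectors, using the intertwiner relations recorded in the preceding paragraphs. The strategy is to diagonalize the contravariant form by exploiting the fact that $z_1,\dots,z_n$ (equivalently the $\epsilon_i^\vee$) are $*$-self-adjoint, so that weight spaces for distinct weights are automatically orthogonal, and then to connect the weight spaces to one another via the intertwiners $\sigma_i$, $\Phi$, $\Psi$, whose squares and adjoints are given explicitly.

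\medskip
\noindent\textbf{Necessity.} First I would prove that unitarity forces $\ttt$-diagonalizability together with the inequalities \eqref{weight conditions}. Since $z_i^*=z_i$, each $z_i$ acts as a self-adjoint operator with respect to a positive definite form on a unitary $L_\kappa(\tau)$, hence is diagonalizable; as the $z_i$ commute, $L_\kappa(\tau)$ is $\ttt$-diagonalizable. For a weight $\alpha$, I would use $\Psi\Phi=z_1$ together with $\Phi^*=\Psi$: for a weight vector $v$ with $z_1 v=\alpha_1 v$ one has $\alpha_1(v,v)=(\Psi\Phi v,v)=(\Phi v,\Phi v)\geq 0$, giving $\alpha_1\geq 0$. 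For the second inequality I would apply $\sigma_i^*=\sigma_i$ and the relation $\sigma_i^2=\bigl((z_i-z_{i+1})^2-1\bigr)/(z_i-z_{i+1})^2$: evaluating $(\sigma_i v,\sigma_i v)=(\sigma_i^2 v,v)$ on a weight vector $v$ forces the scalar $\bigl((\alpha_i-\alpha_{i+1})^2-1\bigr)/(\alpha_i-\alpha_{i+1})^2$ to be nonnegative, and since the denominator is a square this yields $(\alpha_i-\alpha_{i+1})^2\geq 1$ (the case $\alpha_i=\alpha_{i+1}$ being excluded because $\sigma_i$ is well defined on a diagonalizable module only when adjacent weights differ, a point I would address using Suzuki's combinatorial description in Theorem~\ref{SuzThm} where adjacent contents never coincide).

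\medskip
\noindent\textbf{Sufficiency.} For the converse I would assume $L_\kappa(\tau)$ is $\ttt$-diagonalizable with all weights satisfying \eqref{weight conditions}, and build an orthogonal basis of weight vectors on which the form is manifestly positive. The idea is to generate all of $L_\kappa(\tau)$ from the lowest weight space by repeatedly applying the intertwiners $\sigma_i$ and $\Phi$, and to track the norm of each basis vector inductively: each application of $\sigma_i$ multiplies the squared norm by the \emph{positive} scalar $\bigl((\alpha_i-\alpha_{i+1})^2-1\bigr)/(\alpha_i-\alpha_{i+1})^2$ (positive precisely by \eqref{weight conditions}), while $\Phi$ contributes a factor controlled by $\Psi\Phi=z_1$ acting by the nonnegative scalar $\alpha_1$. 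Since weight vectors for distinct weights are orthogonal (self-adjointness of $\ttt$) and each generating step preserves positivity, I would conclude that the whole form is positive definite, hence $L_\kappa(\tau)$ is unitary. I would use Theorem~\ref{SuzThm} to guarantee that the intertwiners indeed reach every weight space (each one-dimensional) from the lowest weight via standard periodic tableaux, so the inductive generation exhausts $L_\kappa(\tau)$.

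\medskip
\noindent\textbf{Main obstacle.} The delicate point, and the one I expect to be hardest, is the sufficiency direction: controlling the norms under the intertwiners requires that the intertwiners actually map a given weight space onto the adjacent one (rather than to zero) in exactly those situations where the norm-factor is positive, and that the positivity propagates consistently around the combinatorial structure of $S(\widehat{\tau})$ without ever being forced through a vanishing or sign-changing step. Reconciling the possibility that $\sigma_i$ degenerates (when it would send a vector to zero) with the requirement that the basis still spans $L_\kappa(\tau)$ is where the combinatorics of Suzuki's standard periodic tableaux must be invoked carefully; the $\Phi$-steps, which change the underlying tableau by the periodicity shift $p=(-m,\kappa-m)$, are what tie the condition $\alpha_1\geq 0$ to the global positivity and must be handled in tandem with the $\sigma_i$-steps.
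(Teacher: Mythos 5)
Your proposal is correct and follows essentially the same route as the paper's proof: necessity via the identities $\langle \Phi.f,\Phi.f\rangle=\alpha_1\langle f,f\rangle$ and $\langle \sigma_i.f,\sigma_i.f\rangle=\frac{(\alpha_i-\alpha_{i+1})^2-1}{(\alpha_i-\alpha_{i+1})^2}\langle f,f\rangle$, and sufficiency by generating every $\ttt$-weight vector from the lowest weight space by the intertwiners $\Phi,\sigma_1,\dots,\sigma_{n-1}$ (a consequence of irreducibility) and invoking orthogonality of distinct weight spaces together with Suzuki's theorem. The one remark worth making is that your ``main obstacle'' largely dissolves in the paper's treatment: one only needs the norms to be \emph{non-negative} (the multiplicative factors may vanish, and intertwiners may kill vectors), because the spanning property plus orthogonality yields positive semidefiniteness, and nondegeneracy of the contravariant form on the irreducible module $L_\kappa(\tau)$ then forces positive definiteness, with no need to track nonvanishing of individual intertwiner steps through the combinatorics of $S(\widehat{\tau})$.
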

\begin{proof}
The module $L_\kappa(\tau)$ is graded by finite dimensional subspaces, 
and $\ttt$ preserves these subspaces.  The contravariant form is unitary 
on $L_\kappa(\tau)$ exactly if its restriction to each graded piece is.

Suppose first that $L_\kappa(\tau)$ is unitary.  Then $z_1,\dots,z_n$ 
act on each graded piece as commuting self-adjoint operators, and it 
follows that $L_\kappa(\tau)$ is $\ttt$-diagonalizable.  For a 
$\ttt$-eigenvector $f \in L_\kappa(\tau)$ of weight $\alpha$, one has
\begin{equation} \label{ineq1}
 \la \Phi.f,\Phi.f \ra=\la f,\Psi \Phi.f \ra=\la f, z_1.f \ra
=\alpha_1 \la f,f \ra
\end{equation} and it follows that $\alpha_1 \geq 0$.  Similarly, for $1 \leq i \leq n-1$
\begin{equation} \label{ineq2}
 \la \sigma_i.f,\sigma_i.f \ra =\la f, \sigma_i^2.f \ra 
=\frac{(\alpha_i-\alpha_{i+1})^2-1}{(\alpha_i-\alpha_{i+1})^2} \la f,f \ra
\end{equation} and it follows that $(\alpha_i-\alpha_{i+1})^2 \geq 1$.

Conversely, the irreducibility of $L_\kappa(\tau)$ implies that each 
weight vector of $\ttt$ can be obtained from a weight vector in 
$S^\tau \subseteq L_\kappa(\tau)$ by applying a sequence of operators 
from the set $\{\Phi,\sigma_1,\dots,\sigma_{n-1}\}$.  Assuming that 
$L_\kappa(\tau)$ is diagonalizable and that \eqref{weight conditions} 
holds, the equalities \eqref{ineq1} and \eqref{ineq2} imply that the 
$\ttt$-eigenvectors in $L(\tau)$ have non-negative norms.  Therefore 
Suzuki's theorem and the orthogonality of distinct weight spaces 
implies that $L_\kappa(\tau)$ is unitary.
\end{proof}

The lemma reduces the question of unitarity of irreducible lowest 
weight modules for the rational Cherednik algebra of type $S_n$ to 
an examination of their $\ttt$-spectra.  One could prove Theorem~5.6 
this way.  We will use it to finish the proof of the converse:
\begin{theorem} \label{Unitary theorem}
If $\kappa \in \ZZ$ and $\kappa \geq N(\tau^*)$ or 
$\kappa \leq -N(\tau)$ then $L_\kappa(\tau)$ is unitary.
\end{theorem}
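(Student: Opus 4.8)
The plan is to combine the numerical criterion of Lemma~\ref{numerical crit} with Suzuki's combinatorial description (Theorem~\ref{SuzThm}) of the $\ttt$-weights of $L_\kappa(\tau)$. First I would reduce to the case $\kappa\in\ZZ_{>0}$. The twist by the sign character interchanges $L_\kappa(\tau)$ and $L_{-\kappa}(\tau^*)$ and preserves unitarity (Proposition~\ref{char}), and since $(\tau^*)^*=\tau$ the hypothesis $\kappa\le -N(\tau)$ becomes $-\kappa\ge N((\tau^*)^*)$, which is exactly the positive-integer case applied to the partition $\tau^*$. So it suffices to treat $\kappa\in\ZZ_{>0}$ with $\kappa\ge N(\tau^*)$.

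The second step is diagonalizability, which comes for free: writing $\kappa=s/r$ with $r=1$ and $s=\kappa$, the hypothesis $\kappa\ge N(\tau^*)$ is precisely the condition $\tau\in P_\kappa$. Hence Theorem~\ref{SuzThm} applies and $L_\kappa(\tau)$ is $\ttt$-diagonalizable, with one-dimensional weight spaces indexed by the standard periodic tableaux $T\in S(\widehat{\tau})$, the $\epsilon_i^\vee$-eigenvalue on the space attached to $T$ being $\text{ct}(T^{-1}(i))$. By Lemma~\ref{numerical crit} it then remains to verify the two weight inequalities for every such $T$. Using the conjugation formula $w_0 z_i w_0^{-1}=\epsilon_{n-i+1}^\vee+\kappa$ together with the unitarity of $w_0$ on $L_\kappa(\tau)$, the $z$-weights are the reversed, $\kappa$-shifted content vectors: a $z$-weight $\beta$ satisfies $\beta_i=\text{ct}(T^{-1}(n-i+1))+\kappa$. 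Thus the condition $(\beta_i-\beta_{i+1})^2\ge 1$ is equivalent to $\bigl(\text{ct}(T^{-1}(i))-\text{ct}(T^{-1}(i+1))\bigr)^2\ge 1$ for all $i$, while the condition $\beta_1\ge 0$ collapses to the single inequality $\text{ct}(T^{-1}(n))\ge -\kappa$.

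For the difference condition I would exploit that $\kappa\in\ZZ$, so $p=(-m,\kappa-m)\in\ZZ^2$ and every content is an integer; it therefore suffices to show that consecutive labels $i,i+1$ never lie on a common diagonal of $\widehat{\tau}$. This is the periodic analogue of the classical fact for standard Young tableaux: if $(a,b)$ and $(a+t,b+t)$ with $t\ge 1$ share a diagonal, then chaining the row relation $T(a+t,b+t-1)<T(a+t,b+t)$ with the case $k=t-1$ of the second standardness relation, $T(a,b)<T(a+t,b+t-1)$, yields $T(a+t,b+t)\ge T(a,b)+2$, so a common diagonal cannot carry consecutive labels (the boundary case where the intermediate box sits at the left edge of its translate is handled by the same relations applied in the appropriate translate). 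Hence distinct integer contents, and the squared difference is at least $1$.

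The crux is the remaining inequality $\text{ct}(T^{-1}(n))\ge -\kappa$ for all $T\in S(\widehat{\tau})$, equivalently $\text{ct}(T^{-1}(0))\ge 0$, since $T^{-1}(0)=T^{-1}(n)+p$ and translation by $p$ raises content by $\kappa$. This is where the hypothesis $\kappa\ge N(\tau^*)$ must enter essentially: the box labeled $n$ is a maximal box of the window $T^{-1}(\{1,\dots,n\})$ in the standardness order, and the constraint $T(b)>0$ for $b\in\tau$ prevents that box (and the box labeled $0$ just below the window) from sliding too far to the southwest in $\widehat{\tau}$. The main work is to locate the worst-case standard periodic tableau and to show that the smallest content its top box can attain, governed by the period $p=(-m,\kappa-m)$, is bounded below by $-\kappa$ precisely when $\kappa\ge N(\tau^*)=\ell(\tau)-\tau_m+1$. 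I expect this content estimate to be the main obstacle; once it is in place, Suzuki's theorem, the $w_0$-symmetry, and Lemma~\ref{numerical crit} assemble the result, and together with Theorem~\ref{conta} and Proposition~\ref{larghook} this completes Theorem~\ref{genera}.
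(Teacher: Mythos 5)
Your framework coincides with the paper's proof: reduction to $\kappa\in\ZZ_{>0}$ via the sign twist and Proposition~\ref{char}, diagonalizability from Theorem~\ref{SuzThm} (noting that for integer $\kappa$ the hypothesis $\kappa\ge N(\tau^*)$ is exactly the condition $\tau\in P_\kappa$), the $z$-weight formula $\beta_i=\text{ct}(T^{-1}(n-i+1))+\kappa$, and the verification of $(\beta_i-\beta_{i+1})^2\ge 1$ from integrality of contents together with the fact that consecutive labels cannot occupy a common diagonal (your chaining argument here is actually more careful than the paper's one-sentence assertion). But your proposal stops short of being a proof: the remaining inequality $\beta_1\ge 0$, i.e.\ $\text{ct}(T^{-1}(n))\ge -\kappa$, is exactly where you write that you ``expect this content estimate to be the main obstacle'' and leave an unexecuted plan (``locate the worst-case standard periodic tableau''). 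That is a genuine gap — the theorem is unproved without it.

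Moreover, your diagnosis of where $\kappa\ge N(\tau^*)$ ``must enter essentially'' is misplaced, and this is why the step looks harder to you than it is. The hypothesis is consumed almost entirely by Suzuki's theorem (it is precisely the diagonalizability criterion); for the content bound one only needs its crude consequence $\kappa\ge m$. Indeed, since $T(b)>0$ for $b\in\tau$ and $T(b-p)=T(b)+n$, every box of $\tau-jp$ with $j\ge 1$ carries a label $>n$, so $T^{-1}(n)\in\tau+kp$ for some $k\ge 0$. The minimal content of a box of $\tau$ is $1-m$, and translation by $p$ adds $\kappa>0$ to contents, hence $\text{ct}(T^{-1}(n))\ge 1-m$. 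Finally, $N(\tau^*)=\ell(\tau)-\tau_m+1=\tau_1+m-\tau_m\ge m$ because $\tau_1\ge\tau_m$, so $\text{ct}(T^{-1}(n))+\kappa\ge 1-m+N(\tau^*)\ge 1>0$. No worst-case tableau analysis is needed; this two-line estimate is what the paper compresses into ``by the definitions of $N(\tau^*)$ and the set $S(\widehat{\tau})$.'' With this inserted, your argument becomes the paper's proof.
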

\begin{proof}
We will show using Lemma~\ref{numerical crit} that if $\kappa \in \ZZ$ 
and $\kappa \geq N(\tau^*)$ then $L_\kappa(\tau)$ is unitary.  
Proposition 3.3 then completes the proof.  Since $\kappa \geq N(\tau^*)$, 
Suzuki's Theorem~\ref{SuzThm} implies that the module $L_\kappa(\tau)$ 
is diagonalizable with weight basis $f_T$ (for $\ttt$) in bijection 
with the set $S(\widehat{\tau})$ of standard tableaux $T$ on 
$\hat{\tau}=\tau+\ZZ (-m,\kappa-m)$ such that $T(b)>0$ for $b \in \tau$,
where $m$ is the length of $\tau$.  The weight of $f_T$ is given by
\begin{equation}
z_i.f_T=(\text{ct}(T^{-1}(n-i+1))+\kappa) f_T 
\quad \hbox{for $1 \leq i \leq n$.}
\end{equation}

The contents of the boxes of $\widehat{\tau}$ are all integers since 
$\kappa$ is an integer.  This implies the second inequality in 
\eqref{weight conditions} of Lemma~\ref{numerical crit}.  Furthermore, 
adding $\kappa$ to the content of the box containing $n$ gives a 
non-negative integer by the definitions of $N(\tau^*)$ and the set 
$S(\widehat{\tau})$.  This implies the first 
inequality of \eqref{weight conditions} of Lemma~\ref{numerical crit}.  
The theorem is proved.
\end{proof}

By using the same techniques and the results of \cite{Gri2}, one 
should be able to determine the set of unitary lowest weight irreducibles 
for rational Cherednik algebras attached to the groups $G(r,p,n)$.  The 
missing ingredient is the analog of Theorem~\ref{SuzThm} for the groups 
$G(r,1,n)$.  We are currently working on this problem, using the results 
of \cite{Gri2} as a starting point.

\end{document}